\theoremstyle{plain}
\newtheorem{thm}{Theorem}[section]
\newtheorem*{proposition}{Proposition}
\newtheorem*{theo}{Theorem}
\newtheorem*{def-theo}{Definition-Theorem}
\newtheorem*{corollary}{Corollary}
\newtheorem{lem}[thm]{Lemma}
\theoremstyle{definition}
\newtheorem*{definition}{Definition}
\newtheorem{defn}[thm]{Definition}
\newtheorem{rem}[thm]{Remark}
\numberwithin{equation}{section}
\newcommand{\bp}{\begin{pmatrix}}
\newcommand{\ep}{\end{pmatrix}}
\newcommand{\bps}{\begin{smallmatrix}}
\newcommand{\eps}{\end{smallmatrix}}
\def\R{{\mathbb R}}
\def\Z{{\mathbb Z}}
\def\A{{\mathcal A}}
\def\B{{\mathcal B}}
\def\al{{\alpha}}
\def \0{{\bf 0}}
\def \1{{\bf 1}}
\def\Tr{\mathrm{Tr}}
\def \rank{\mathrm{rank}}
\begin{document}

\title{The Zero Locus  of  the $F$-triangle } 
 \author{ Kyoji Saito
 \date{}
\footnote{Kavli\! Institute\! for\! the\! Physics\! and\! Mathematics\! of\! the\! Universe\! (WPI),\! 
the\! University\! of\! Tokyo}}

\maketitle

{\renewcommand{\baselinestretch}{0.1}}

\begin{abstract} 
We are interested in the zero locus of a Chapoton's $F$-triangle as a polynomial in two real variables $x$ and $y$. An expectation is that (1)  {\it the $F$-triangle of rank $l$ as a polynomial in $x$ for each fixed $y\in[0,1]$, has exactly $l$ distinct real roots in $[0,1]$}, and (2) {\it $i$-th root $x_i(y)$ {\rm($1 \!\le \! i \! \le \! l$)} as a function on $y \! \in\! [0,1]$ is monotone decreasing}. In order to understand these phenomena, we slightly generalized the concept of $F$-triangles and study the problem on the space of such generalized triangles. We analyze the case of low rank in details and show that the above expectation is true. We formulate inductive conjectures and questions for further rank cases.\ 
This study gives a new insight on the zero loci of $f^+$\!- and $f$-polynomials.
%
\end{abstract}


\section{Chapoton's $F$-triangle}

We recall the {\it $F$-triangle} $F_\Phi$  by F. Chapoton(\cite{C1}) associated with a complete simplicial fan $\Delta(\Phi)$, called a {\it cluster fan}, associated with a finite root system $\Phi$,  and introduced by Fomin and Zelevinsky
 \cite{F-Z1,F-Z2}. \footnote
{In order to adjust to the study in the present note, the definition \eqref{ftriangle} changes its sign of variables from the original definition of Chapoton. One should be cautious that this change causes several sign changes in the sequel (e.g. \eqref{partialderivative}, Proposition 4., \eqref{D-derivation}, \ldots etc.). }.

Let $\Phi$ be a finite (not necessary irreducible) root system of rank $l$, 
  $\Phi^+ \subset \Phi$  be a positive root system, and $\Pi=\{\alpha_i\}_{i\in I} \subset \Phi^+$ be the associated simple basis where $I$ is the index set of order $l$ and we identify it with associated Dynkin diagram whose underlying set is $I$. 
Let $\Phi(J)$ denotes the finite root system associated with the full sub-diagram $J\subset I$.
%
 A symmetric compatiblity relation $(\al\Vert \beta)=0$ for  $\al,\beta \in \Phi_{\ge-1}:=\Phi^+ \cup \{-\Pi \}$ was introduced in \cite{F-Z3}:

\begin{theo} {\rm (\cite{F-Z3} Theorem 1.10)}
The cones spanned by subsets of mutually compatible elements in $\Phi_{\ge-1}$ define a complete simplicial fan $\Delta(\Phi)$.
\end{theo}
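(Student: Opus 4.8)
Write $\sigma_C=\sum_{\gamma\in C}\R_{\ge 0}\,\gamma$ for the cone spanned by a set $C$ of mutually compatible elements of $\Phi_{\ge-1}$, and call a \emph{maximal} such set a cluster. The plan is to verify the three defining properties of a complete simplicial fan: (i) each $\sigma_C$ is simplicial, i.e.\ $C$ is linearly independent; (ii) the maximal cones cover $\R^l$; and (iii) any two of the cones meet in a common face. I would first record the combinatorial backbone. Using the $\tau_{\pm}$-invariance of $(\cdot\Vert\cdot)$ together with the formula $(-\alpha_i\Vert\beta)=[\beta:\alpha_i]$ on negative simple roots, I would show that the seed $-\Pi=\{-\alpha_i\}_{i\in I}$ is a cluster, that every cluster has exactly $l$ elements, and that the clusters are connected under \emph{mutation}: for a cluster $C$ and $\gamma\in C$ there is a unique $\gamma'\neq\gamma$ with $(C\setminus\{\gamma\})\cup\{\gamma'\}$ again a cluster, governed by an exchange relation
\[
\gamma+\gamma'=\sum_{\delta\in C\setminus\{\gamma\}}c_\delta\,\delta,\qquad c_\delta\in\Z_{\ge 0}.
\]

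For (i) the seed $-\Pi$ is a basis of the root space, and the exchange relation propagates linear independence along each mutation: were $\gamma'\in\mathrm{span}(C\setminus\{\gamma\})$, the relation would force $\gamma$ into the same span, contradicting independence of the old cluster. By the connectivity just established, every cluster is then independent, so each $\sigma_C$ is a full-dimensional simplicial cone whose facets are the $\mathrm{Cone}(C\setminus\{\gamma\})$.

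For (ii) and (iii) I would argue by induction on the rank $l$, exploiting the sub-root-systems $\Phi(J)$ already set up above. The decisive structural fact is that the elements of $\Phi_{\ge-1}$ compatible with a fixed $-\alpha_i$ are exactly those of $\Phi(I\setminus\{i\})_{\ge-1}$, so the link of the ray $\R_{\ge0}(-\alpha_i)$ is the rank-$(l-1)$ cluster complex $\Delta(\Phi(I\setminus\{i\}))$; via the $\tau_{\pm}$-action the link of an arbitrary face likewise reduces to a lower-rank cluster complex, which by the inductive hypothesis is a complete simplicial fan. Together with the exchange relation — which shows that across the facet $\mathrm{Cone}(C\setminus\{\gamma\})$ the adjacent cones $\sigma_C$ and $\sigma_{C'}$ lie on opposite sides of the spanning hyperplane — this makes the developing map from the (compact, finite) spherical realization of the cluster complex to the unit sphere $S^{l-1}$ a local homeomorphism: codimension-one faces are handled by ``exactly two cones on opposite sides,'' higher-codimension faces by the inductive completeness of their links.

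The hard part will be upgrading this local picture to the two global statements. I expect to close the argument with a covering/degree argument: a local homeomorphism from a compact, connected, boundaryless source to the connected sphere $S^{l-1}$ is a covering, and for $l\ge3$ the sphere is simply connected, so the map is a homeomorphism — which yields at one stroke completeness (ii) and the non-overlap forcing (iii); the cases $l\le2$ are checked by hand. The genuine obstacle is exactly this non-overlap: ruling out that two clusters far apart in the mutation graph span full-dimensional cones with intersecting interiors. Purely local wall-crossing cannot see this, and it is where the global input — the covering argument fed by the inductive sphericity of all the links $\Delta(\Phi(J))$ — is indispensable.
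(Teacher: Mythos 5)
Your proposal cannot be compared with a proof in the paper, because the paper contains none: the statement is imported verbatim from Fomin--Zelevinsky \cite{F-Z3} (their Theorem 1.10), and Saito uses it as a black box. So the only meaningful benchmark is Fomin--Zelevinsky's own argument. Their proof is a rank induction organized around a different pivot: one shows that \emph{every vector of the ambient space has a unique cluster expansion}, i.e.\ a unique representation as a nonnegative combination of mutually compatible elements of $\Phi_{\ge-1}$. The mechanism is that a strictly negative coordinate $[\lambda:\alpha_i]<0$ forces $-\alpha_i$ into any expansion (all other elements of $\Phi_{\ge-1}$ have nonnegative $\alpha_i$-coordinate), and then all remaining members must lie in $\Phi(I\setminus\{i\})_{\ge-1}$, reducing to rank $l-1$; the piecewise-linear symmetries $\tau_\pm$, which preserve compatibility, are used to move an arbitrary vector into such a region. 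The fan property is an immediate corollary, and the facts you call the ``combinatorial backbone'' --- purity, unique exchange partners, mutation connectivity --- come out as \emph{consequences}, not inputs.

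That inversion is exactly where your proposal has a genuine gap. You assume as backbone that every cluster has exactly $l$ elements, that each $\gamma\in C$ has a unique exchange partner $\gamma'$, that the exchange relation $\gamma+\gamma'=\sum_{\delta}c_\delta\delta$ holds with $c_\delta\ge0$, and that the exchange graph is connected; but these are theorems at least as deep as the one to be proved, and in the standard development they are established with (or after) the fan structure, so taking them as given is close to circular. The gesture toward proving them ($\tau_\pm$-invariance plus the formula for $(-\alpha_i\Vert\beta)$) is not sufficient: that invariance does give the reduction of links at negative simple roots --- which you use separately and correctly --- but it does not by itself yield the pseudomanifold property, and the nonnegativity of exchange coefficients is a genuinely hard fact (in the literature it is obtained via cluster-algebra or polytopal methods, e.g.\ Chapoton--Fomin--Zelevinsky). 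There is also a secondary geometric issue: your reduction of the link of an \emph{arbitrary} face to a lower-rank cluster complex uses the $\tau_\pm$-action, but the $\tau_\pm$ are only piecewise-linear, so the combinatorial identification of links does not automatically give the linear/conical identification needed for the developing map to be a local homeomorphism at faces containing no negative simple root. Granting the backbone, your globalization (opposite-sides wall crossing, sphere links by induction, covering/degree argument over $S^{l-1}$ for $l\ge3$, low rank by hand) is sound and is a legitimately different route from Fomin--Zelevinsky's; but as written, the hard content of the theorem is concentrated precisely in the steps you defer.
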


Taking the distinction between positive and negative vertices $-\Pi$ of $\Delta(\Phi)$ in account, F. Chapoton (\cite{C1} (1)) introduced a refinement of the face counting generating function, called the $F$-triangle (see Footnote 1).
\begin{defn}
\ For a root system $\Phi$,   the $F$-triangle is 
 \begin{equation}
 \label{ftriangle}
 \begin{array}{rcl}
F(\Phi)& =& F_\Phi(x,y) \ =\ \sum_{k=0}^l \sum_{m=0}^l f_{k,m} (-x)^k (-y)^m, \\
\end{array}
\end{equation}
where $f_{k,m}$ is the cardinality of the set of simplicial cones of $\Delta(\Phi)$ spanned by exactly $k$ positive roots and $m$ negative simple roots. The coefficient $f_{k,m}$ vanishes if $k+m>l$, hence the name triangle. 
\end{defn}

 For a formal convenience, we include an ``empty root system'' $\Phi(\emptyset)$ in the discussion.  In that case, we set $F(\Phi(\emptyset)): =  1$. The next and the most basic $F$-triangle is the case of rank 1, which is of type $A_1$ and is given by 
$$
F_{A_1}= 1-x-y.
$$
In general, we have the following reductions.

\begin{proposition} {\bf 1}
\label{chapoton3} 
{\rm (\cite{C1} Proposition 3)}
The F-triangle has the following properties.

{\bf 1.}  If $\Phi$ and $\Phi'$ are two root systems, one has $F(\Phi\times\Phi')=F(\Phi)\times F(\Phi')$.\footnote
{If $\Phi_1, \Phi_2$ are root systems in euclidean spaces $V_1,V_2$, respectively, then $\Phi_1\times \Phi_2 := \{\Phi_1\times\{0\}\} \cup \{\{0\}\times\Phi_2\}$ in $V_1\times V_2$, whose set of simple roots is the disjoint union $I_1\cup I_2$. }

{\bf 2.} If $\Phi$ is an irreducible root system on a Dynkin diagram $I$, then one has
\begin{equation}
\label{partialderivative}
\partial_y F(\Phi(I)) = -\sum_{i\in I} F(\Phi(I\setminus \{i\})).
\end{equation}
\end{proposition}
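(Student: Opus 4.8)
The plan is to deduce both statements from the combinatorial structure of the cluster fan $\Delta(\Phi)$ together with the behaviour of the compatibility relation $(\cdot\Vert\cdot)$ under two operations: forming products of root systems, and passing to the link of a negative simple ray. Throughout I will use the defining interpretation that $f_{k,m}$ counts the simplicial cones of $\Delta(\Phi)$ spanned by $k$ mutually compatible positive roots and $m$ mutually compatible negative simple roots, so that $F(\Phi)$ is the generating function $\sum_{k,m} f_{k,m}(-x)^k(-y)^m$.

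For Part 1, the first step is to record that for $\Phi=\Phi'\times\Phi''$ the almost positive roots split as a disjoint union $\Phi_{\ge-1}=\Phi'_{\ge-1}\sqcup\Phi''_{\ge-1}$, and that any $\gamma'\in\Phi'_{\ge-1}$ and $\gamma''\in\Phi''_{\ge-1}$ satisfy $(\gamma'\Vert\gamma'')=0$ because they lie in orthogonal summands. Consequently every cone of $\Delta(\Phi)$ factors uniquely as a product $\sigma'\times\sigma''$ of cones of $\Delta(\Phi')$ and $\Delta(\Phi'')$, and the numbers of positive (resp. negative simple) rays add. This yields the convolution identity $f_{k,m}(\Phi)=\sum_{k'+k''=k,\ m'+m''=m} f_{k',m'}(\Phi')\,f_{k'',m''}(\Phi'')$, which is exactly the Cauchy product of the coefficient arrays; since the monomials $(-x)^k(-y)^m$ are themselves multiplicative, this reads $F(\Phi'\times\Phi'')=F(\Phi')\,F(\Phi'')$.

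For Part 2, I would begin with the direct computation $\partial_y F(\Phi)=-\sum_{k,m} m\,f_{k,m}(-x)^k(-y)^{m-1}$, so that the task is to identify $m\,f_{k,m}$ combinatorially. The factor $m$ lets me read $m\,f_{k,m}$ as the number of pairs $(\sigma,-\alpha_i)$ where $\sigma$ is a cone with parameters $(k,m)$ and $-\alpha_i$ is one of its negative simple rays; summing over $i$ first,
$$m\,f_{k,m}(\Phi)=\sum_{i\in I}\#\{\text{cones with parameters }(k,m)\text{ containing the ray }-\alpha_i\}.$$
The crucial step is the structural claim that the link of the ray $-\alpha_i$ in $\Delta(\Phi)$ is isomorphic to $\Delta(\Phi(I\setminus\{i\}))$, via an isomorphism sending positive rays to positive rays and the remaining negative simple rays $-\alpha_j$ $(j\neq i)$ to the negative simple rays of the subsystem: an element of $\Phi_{\ge-1}$ is compatible with $-\alpha_i$ precisely when it is some $-\alpha_j$ with $j\neq i$ or a positive root whose expansion omits $\alpha_i$, and the latter are exactly the positive roots of $\Phi(I\setminus\{i\})$. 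Granting this, deleting $-\alpha_i$ from a cone $\sigma\ni -\alpha_i$ produces a cone of $\Delta(\Phi(I\setminus\{i\}))$ with parameters $(k,m-1)$, bijectively, so the count above equals $f_{k,m-1}(\Phi(I\setminus\{i\}))$. Substituting and reindexing $m\mapsto m-1$ collapses the sum to $-\sum_{i\in I}F(\Phi(I\setminus\{i\}))$.

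The main obstacle is the link isomorphism invoked in Part 2, and within it the assertion that the compatibility relation restricted to the elements compatible with $-\alpha_i$ reproduces the intrinsic compatibility relation of the subsystem. This is where the precise definition of the Fomin--Zelevinsky compatibility degree must be used: I would check that $(-\alpha_i\Vert\beta)$ equals the coefficient of $\alpha_i$ in $\beta$ for positive $\beta$ (so compatibility with $-\alpha_i$ forces $\beta$ into $\Phi(I\setminus\{i\})$), and that, once both arguments lie in $\Phi_{\ge-1}(I\setminus\{i\})$, the degree is computed by the same combinatorial rule whether the roots are viewed inside $\Phi$ or inside $\Phi(I\setminus\{i\})$. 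Part 1 is routine once the orthogonality observation is in hand; it is Part 2 that carries the content, and verifying this compatibility-restriction property carefully — rather than the generating-function bookkeeping — is the delicate point.
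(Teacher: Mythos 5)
Your proposal is correct, but there is nothing in the paper to compare it against: the paper states Proposition 1 as a quotation of Chapoton (\cite{C1}, Proposition 3) and gives no proof of either part; the only argument supplied in that section is the Leibniz-rule computation \emph{after} the proposition, which extends \eqref{partialderivative} from irreducible to reducible root systems by combining parts 1 and 2. Your argument fills the gap with what is essentially the standard (and presumably Chapoton's original) proof: part 1 via the splitting $\Phi_{\ge-1}=\Phi'_{\ge-1}\sqcup\Phi''_{\ge-1}$ and the vanishing of compatibility degrees across orthogonal summands, and part 2 via counting pairs $(\sigma,-\alpha_i)$ and identifying the link of $-\alpha_i$ in $\Delta(\Phi)$ with $\Delta(\Phi(I\setminus\{i\}))$. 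You correctly isolate the one nontrivial input, namely that $(-\alpha_i\Vert\beta)=[\beta:\alpha_i]$ for positive $\beta$ and that compatibility degrees restrict to parabolic subsystems; both are facts from Fomin--Zelevinsky (\cite{F-Z3}, Section 3), so the link isomorphism is legitimate. One bonus worth noting: your link argument nowhere uses irreducibility of $\Phi$, so it proves \eqref{partialderivative} for arbitrary (possibly reducible) root systems directly, which subsumes the paper's separate Leibniz-rule derivation for the reducible case; the irreducibility hypothesis in the statement is simply not needed for your route.
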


Actually, combining 1. and 2. of Proposition, \eqref{partialderivative} is valid for non-irreducible root systems. E.g., suppose the diagram $I$ decomposes into a disjoint union of two connected diagrams $I_1$ and $I_2$ so that $\Phi=\Phi(I)=\Phi(I_1)\times\Phi(I_2)$.
Then 
\[
\begin{array}{rll}
\partial_yF(\Phi(I))= &\partial_y(F(\Phi(I_1))F(\Phi(I_2))) \\
= &(\partial_yF(\Phi(I_1)))F(\Phi(I_2)) +F(\Phi(I_1))(\partial_yF(\Phi(I_2)) )\\
= &\!\!\!\!-(\sum_{i\in I_1} \! F(\Phi(I_1\!\! \setminus \!\! \{i\}))) F(\Phi(I_2)) \! - \! F(\Phi(I_1))(\sum_{i\in I_2} \! F (\Phi(I_2 \!\! \setminus \!\! \{i\}))) \\
= &\!\!\!\!-\sum_{i\in I_1}F(\Phi((I_1\setminus\{i\})\cup I_2)) - \sum_{i\in I_2} F (\Phi(I_1 
\cup (I_2\setminus\{i\}))) \\
= &\!\!\!\! - \sum_{i\in I=I_1\cup I_2} F(\Phi(I\setminus\{i\}))
\end{array}
\]
Chapoton also has shown the following rotational symmetry of $F$-triangles.

\begin{proposition} {\bf 2}
\label{chapoton5}
 {\rm (\cite{C1} Proposition 5)} 
 Let $\Phi$ be a root system of rank $l$, then one has 
\begin{equation}
\label{rotation}
F_{\Phi}(x,y) \quad = \quad (-1)^l\ F_{\Phi}(1-x,1-y).
\end{equation}
\end{proposition}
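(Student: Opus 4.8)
The plan is to prove the rotation symmetry \eqref{rotation} by strong induction on the rank $l$, assuming \eqref{rotation} for all root systems of rank $<l$ and using only the reductions of Proposition~1. Writing $\widetilde F_\Phi(x,y):=(-1)^l F_\Phi(1-x,1-y)$, I would first reduce to the irreducible case. Part~1 of Proposition~1 shows that the operation $F\mapsto\widetilde F$ is multiplicative: if $\Phi=\Phi_1\times\Phi_2$ with ranks $l_1,l_2$, then $\widetilde F_\Phi=(-1)^{l_1}(-1)^{l_2}F_{\Phi_1}(1-x,1-y)F_{\Phi_2}(1-x,1-y)=\widetilde F_{\Phi_1}\,\widetilde F_{\Phi_2}$, so \eqref{rotation} for the factors forces it for the product. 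Hence it suffices to treat irreducible $\Phi$, and the base cases $F(\Phi(\emptyset))=1$ and $F_{A_1}=1-x-y$ satisfy \eqref{rotation} by direct substitution.

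The core inductive step is to show that $\widetilde F_\Phi$ satisfies the \emph{same} first-order relation in $y$ as $F_\Phi$. By the chain rule $\partial_y\widetilde F_\Phi=-(-1)^l(\partial_y F_\Phi)(1-x,1-y)$; inserting \eqref{partialderivative} (valid for reducible systems, as checked after Proposition~1) and then the induction hypothesis $F_{\Phi(I\setminus\{i\})}(1-x,1-y)=(-1)^{l-1}F_{\Phi(I\setminus\{i\})}(x,y)$ for the rank-$(l-1)$ subsystems, the signs collapse, via $(-1)^{2l-1}=-1$, to $\partial_y\widetilde F_\Phi=-\sum_{i\in I}F_{\Phi(I\setminus\{i\})}=\partial_y F_\Phi$. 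Thus $D_\Phi:=F_\Phi-\widetilde F_\Phi$ satisfies $\partial_y D_\Phi=0$, so $D_\Phi(x,y)=g_\Phi(x)$ is a polynomial in $x$ alone, and the whole problem is reduced to showing $g_\Phi\equiv 0$.

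Next I would pin down $g_\Phi$ as far as the available tools allow. Applying $F\mapsto\widetilde F$ twice returns $F_\Phi$, and this involutivity yields the functional equation $g_\Phi(x)+(-1)^l g_\Phi(1-x)=0$. Comparing top $x$-degrees shows $\deg_x g_\Phi\le l-1$: only $m=0$ contributes to $x^l$, and the leading coefficients of $F_\Phi$ and $\widetilde F_\Phi$ both equal $(-1)^l f_{l,0}$, so they cancel. Finally $g_\Phi(0)=D_\Phi(0,0)=F_\Phi(0,0)-(-1)^l F_\Phi(1,1)$; here $f_{0,0}=1$, and grouping faces of $\Delta(\Phi)$ by dimension $d=k+m$ gives $F_\Phi(1,1)=\sum_d(-1)^d f_d$, where the nonzero cones of the complete simplicial fan $\Delta(\Phi)$ in $\R^l$ triangulate $S^{l-1}$; its Euler characteristic $1+(-1)^{l-1}$ then forces $F_\Phi(1,1)=(-1)^l$, whence $g_\Phi(0)=0$, and the functional equation also gives $g_\Phi(1)=0$.

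The main obstacle is upgrading these constraints to $g_\Phi\equiv 0$, which is equivalent to the boundary identity $F_\Phi(x,0)=(-1)^l F_\Phi(1-x,1)$, i.e.\ to the statement that the positive-part slice $\sum_k f_{k,0}(-x)^k$ and the full complex determine each other in the rotated fashion. The clean degree-lowering relation \eqref{partialderivative} is available only for $\partial_y$, because the negative simple roots $-\Pi$ are indexed by $I$; there is no equally clean $\partial_x$ analogue, as the positive roots are not so indexed. Consequently the functional equation together with $\deg_x g_\Phi\le l-1$ and $g_\Phi(0)=g_\Phi(1)=0$ pins $g_\Phi$ down only for small $l$, and I expect the general vanishing to be exactly where the real work lies. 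I would attack it through the geometric origin of the symmetry, namely the Fomin--Zelevinsky piecewise-linear rotation of $\Phi_{\ge-1}$, which maps $\Delta(\Phi)$ to itself while shifting the positive/negative bookkeeping by one and thereby produces $g_\Phi\equiv 0$ directly; alternatively one proves the boundary identity by a direct combinatorial argument on the cluster fan.
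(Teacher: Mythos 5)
Your intermediate steps are all correct as far as they go: the multiplicativity reduction to irreducible $\Phi$, the chain-rule computation showing $\partial_y\bigl(F_\Phi-\widetilde F_\Phi\bigr)=0$ via \eqref{partialderivative} and the rank-$(l-1)$ induction hypothesis, the degree bound $\deg_x g_\Phi\le l-1$, the functional equation $g_\Phi(x)+(-1)^l g_\Phi(1-x)=0$, and the evaluation $F_\Phi(1,1)=(-1)^l$ by the Euler characteristic of $S^{l-1}$, giving $g_\Phi(0)=g_\Phi(1)=0$. But, as you yourself concede, the proof does not close, and the gap is not a technicality: the constraints you have are genuinely insufficient already at the first nontrivial rank. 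For $l=3$ the polynomial $g(x)=c\,x(1-x)$ satisfies every one of them (degree $\le 2$, vanishing at $0$ and $1$, and symmetric under $x\mapsto 1-x$, which is what the functional equation demands for odd $l$), and for even $l\ge4$ so does $c\,x(1-x)\bigl(x-\tfrac12\bigr)$. The structural reason is the one you identify: \eqref{partialderivative} controls only the $y$-direction, so integrating it leaves an arbitrary "constant of integration" $g_\Phi(x)$, and no tool in Proposition 1 (or anywhere else in this paper's apparatus) constrains the $x$-dependence. Vanishing of $g_\Phi$ is equivalent to the boundary identity $F_\Phi(x,0)=(-1)^lF_\Phi(1-x,1)$, i.e.\ to a nontrivial relation between the $f^+$-polynomial (the count of all-positive faces, which is exactly the data \emph{not} determined by lower-rank information) and the full face count; that is where the entire content of the proposition sits, and your argument never touches it.

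Your closing proposal to obtain $g_\Phi\equiv0$ from the Fomin--Zelevinsky rotation of $\Phi_{\ge-1}$ is a pointer, not a proof. Note that the paper itself offers no proof to compare against: Proposition 2 is quoted verbatim from Chapoton \cite{C1} (his Proposition 5), so the geometric argument you gesture at is precisely the cited external proof. Be careful, too, that you cannot shortcut the gap by invoking the paper's formula \eqref{x=01}: the half of that formula concerning $F_\Phi(1,y)$ is itself deduced in the paper \emph{from} the rotational symmetry \eqref{rotation}, so using it here would be circular (your Euler-characteristic evaluation at the single point $(1,1)$ avoids this, but yields only $g_\Phi(0)=g_\Phi(1)=0$, which we have seen is too weak). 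In summary: what you wrote out correctly re-derives consequences of the symmetry and reduces it to the boundary identity, but the essential step is missing, and the $\partial_y$-induction route cannot supply it without new combinatorial input about the positive part of the cluster complex.
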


\begin{rem}
\label{noncrystal}
The construction of the cluster fan is extended to any finite Coxeter group by the authors \cite{}, which we symbolically denote $\Delta_\Phi$ (where $\Phi$ is the set of reflections and $I$ is a simple generator system of the Coxeter group).  Hence, the $F$-triangle $F(\Phi)$ is defined also for any ``non-crystallographic finite root system" $\Phi$. Then, Propositions 1 and 2 hold for extended $F$-triangles. 

Explicit determination of $F$-triangles are given by Chapoton \cite{C1} and others (see, for instance, \cite{Ar,K}). In the present paper, we shall freely use the results without referring to them explicitly. 
\end{rem}

\noindent
{\bf Other Examples.}  In a personal communication \cite{C2} to the author, F. Chapoton informed that there exists a one parameter (interpolation by the Coxeter number $h$) family of $F$-triangles of rank 3.
In the present note, we also study them under the name Chapoton family $F_{Chap(h)}$ (see \S4 Table and \S5 3. Rank 3 case, d) Figure 8).



\section{Polynomials $\Z_{\ge0}[Cox]$}

Inspired by the descriptions in \S1, we introduce the set $\Z_{\ge0}[Cox]$ of polynomials generated by isomorphism classes of finite Coxeter groups,  and introduce a generalization of $F$-triangles on that set, which we call again $F$-triangles.  

Let us consider the set $Cox:=\{$all isomorphism classes of finite reflection groups (not necessary crystallographic)$\}$. It has a natural multiplicative monoid structure, i.e.\  the product, denoted by $\times$ but which is usually omitted, of two classes of Coxeter groups means the isomorphism class of the product group. We set
\vspace{-0.3cm}
$$
\Z_{\ge0}[Cox]:=\{\text{ $\Z_{\ge0}$-linear combinations of elements of $Cox$}\},
\vspace{-0.1cm}
$$
carrying the additive and multiplicative structures, 
where the sum is formal sum of symbols without any geometric meaning, and the product is the distributive extension of the product defined on the monoid $Cox$.
That is, $\Z_{\ge0}[Cox]$ is the part, consisting of positive coefficients elements, of the polynomial ring $\Z[Cox]$ whose  generators are isomorphism classes of  irreducible   Coxeter groups: $A_l \ (l\in\Z_{\ge0}), B_l \ (l\in \Z_{\ge2}),$ $ D_l \ (l\in\Z_{\ge4}), E_6, E_7, E_8, F_4, G_2, H_3, H_4, I_2(p) \ (p\in\Z_{\ge2})$, where we identify $A_1^2=I_2(2),$ $A_2=I_2(3), B_2=I_2(4), G_2=I_2(6)$.
\footnote
{For simplicity, we denote the formal sum $\Phi\oplus \Phi'$ by $\Phi+\Phi'$. One should not confuse $\Phi+\Phi'$ with $\Phi \times \Phi'$. We stress that $\Phi+\Phi'$ merely means a symbolic sum without any geometric operation on the Coxeter groups.
}

\smallskip
The 
$\Z_{\ge0}[Cox]$ is graded in the following sense. For a monomial $\Phi\in Cox$, we set $\rank(\Phi):=$the number of simple reflections of  $\Phi$. We have an obvious addition rule: $\rank(\Phi_1\Phi_2)=\rank(\Phi_1)+\rank(\Phi_2)$. 
We call a polynomial $\Phi\in \Z_{\ge0}[Cox]$ {\it homogeneous} of $\rank(\Phi)=l$, if all its summand  monomials have the same rank $l$.
We set 
\vspace{-0.2cm}
$$
\Z_{\ge0}[Cox]_l:=\{\Phi \in \Z_{\ge0}[Cox]\mid \Phi \text{ is homogenous of rank } l\} 
\vspace{-0.1cm}
$$
for $l\in\Z_{>0}$ and $\Z_{\ge0}[Cox]_0=\Z_{\ge0}.$

\medskip
We now introduce  three operations on $\Z_{\ge0}[Cox]$: trace $Tr$, boundary map $\partial$, and $F$-triangle map $F$.

\smallskip
\noindent
{\bf 1) Trace morphism.} 
\vspace{-0.2cm}
\[
\begin{array}{c}
Tr \ : \ \Z_{\ge0}[Cox] \longrightarrow \Z_{\ge0} \ ,  \quad \Phi=\oplus_i \ (a_i \Phi_i) \ \ \mapsto\ \ \sum_i a_i
\end{array}
\vspace{-0.1cm}
\]
where $\Phi_i\in Cox$ is a monomial in $\Z_{\ge0}[Cox]$. 
Clearly, the trace $\ Tr$ is a map which preserves both addition and product.

\smallskip
\noindent
{\bf 2)  Boundary map.}  

Let us introduce a boundary map $\partial: \Z_{\ge0}[Cox]_l\to \Z_{\ge0}[Cox]_{l-1}$ for $l\in\Z_{>0}$. Namely, for any  $\Phi=\Phi(I) \in Cox$ (where $I$ is  the Coxeter diagram for $\Phi$), we set 
\vspace{-0.2cm}
\[
\partial \ \Phi \ := \ \oplus_{i\in I} \ \Phi(I\setminus\{i\}).
\vspace{-0.1cm}
\]
Then, we extend  the action $\partial$ to the whole $\Z_{\ge0}[Cox]$ additively.

\begin{proposition}
{\bf 3.} i)  The boundary map $\partial$ satisfies the Leibniz rule.

ii) If $\Phi\in \Z_{\ge0}[Cox]_l$, then $Tr(\partial\Phi)=l \cdot Tr(\Phi)$.
\end{proposition}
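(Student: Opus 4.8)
The plan is to reduce both statements to the case of monomials $\Phi=\Phi(I)\in Cox$, since the boundary map $\partial$ and the trace $Tr$ are both defined by additive extension from the monoid $Cox$, while the product $\times$ is distributive over formal sums. Thus for part (i) it suffices to verify the Leibniz identity
\[
\partial(\Phi(I)\times\Phi(J)) = (\partial\Phi(I))\times\Phi(J) + \Phi(I)\times(\partial\Phi(J))
\]
for two monomials with disjoint Coxeter diagrams $I$ and $J$, and for part (ii) it suffices to compute $Tr(\partial\Phi(I))$ for a single monomial of rank $l=|I|$.

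For part (i), the key observation is that the product of Coxeter groups corresponds to the disjoint union of diagrams, so $\Phi(I)\times\Phi(J)=\Phi(I\sqcup J)$. By definition $\partial\Phi(I\sqcup J)=\oplus_{k\in I\sqcup J}\Phi((I\sqcup J)\setminus\{k\})$, and I would split this sum according to whether the removed vertex $k$ lies in $I$ or in $J$. When $k=i\in I$ the full subdiagram $(I\sqcup J)\setminus\{i\}$ equals $(I\setminus\{i\})\sqcup J$, whence $\Phi((I\sqcup J)\setminus\{i\})=\Phi(I\setminus\{i\})\times\Phi(J)$; the case $k=j\in J$ is symmetric. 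Regrouping the two families of terms and factoring out $\Phi(J)$ (respectively $\Phi(I)$) produces exactly $(\partial\Phi(I))\times\Phi(J)+\Phi(I)\times(\partial\Phi(J))$. The general identity then follows from bilinearity of $\times$ together with additivity of $\partial$.

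For part (ii), for a monomial $\Phi(I)$ with $|I|=l$ the boundary $\partial\Phi(I)=\oplus_{i\in I}\Phi(I\setminus\{i\})$ is a formal sum of exactly $l$ terms, each $\Phi(I\setminus\{i\})$ being again a single element of the monoid $Cox$ even when the diagram $I\setminus\{i\}$ disconnects. Hence $Tr(\partial\Phi(I))=l=l\cdot Tr(\Phi(I))$, since $Tr(\Phi(I))=1$. Writing a general homogeneous element as $\Phi=\oplus_j a_j\Phi_j$ with each $\Phi_j$ a rank-$l$ monomial and using additivity of both $Tr$ and $\partial$ gives $Tr(\partial\Phi)=\sum_j a_j\,Tr(\partial\Phi_j)=l\sum_j a_j=l\cdot Tr(\Phi)$.

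I expect no serious obstacle, as the content is entirely combinatorial. The only point requiring care is the bookkeeping that distinguishes the formal additive structure $\oplus$ from the monoid product $\times$, together with the identification $\Phi(I\sqcup J)=\Phi(I)\times\Phi(J)$ that makes vertex removal respect the factorization. In particular one must note that the terms $\Phi(I\setminus\{i\})$ are genuine monomials of $Cox$ rather than products needing further expansion, which is precisely what forces the term count in (ii) to equal the rank exactly.
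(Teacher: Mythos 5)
Your proof is correct and follows essentially the same route as the paper: the Leibniz rule is verified on monomials via the identification $\Phi(I\sqcup J)=\Phi(I)\times\Phi(J)$ and splitting the vertex sum between $I$ and $J$, then extended by additivity. Your part (ii) argument (each of the $l$ terms $\Phi(I\setminus\{i\})$ is a single monomial of trace $1$) is exactly the observation the paper leaves implicit when it states the trace relation after the proof.
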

\begin{proof} It is sufficient to show the case of a product of two Coxeter groups $\Phi(I_1\cup I_2)=\Phi(I_1)\Phi(I_2)$  where the diagram $I_1 \cup I_2$ is a disjoint union of two sub-diagrams $I_1$ and $I_2$. Then, we trivially have 
\[
\begin{array}{rll}
\vspace{0.1cm}
&\partial\Phi(I))=  \sum_{i\in I=I_1\cup I_2} \Phi(I\setminus\{i\})\\
\vspace{0.1cm}
= &\sum_{i\in I_1}\Phi(I_1\setminus\{i\})\Phi(I_2) + \sum_{i\in I_2} \Phi(I_1) \Phi(I_2\setminus\{i\}) \\
= &(\partial \Phi(I_1))\Phi(I_2) +\Phi(I_1)(\partial\Phi(I_2) )\\
\end{array}
\vspace{-0.5cm}
\]
\end{proof}
\noindent
For $\Phi\in \Z_{\ge0}[Cox]_l$, we have the relation:
$
Tr(\partial(\Phi))\quad =\quad  l \cdot Tr(\Phi) \ .
$


\medskip
\noindent
{\bf 3) $F$-triangle.} 

We re-introduce the $F$-triangle map on $\Z_{\ge0}[Cox]$ as the additive extension of the original $F$-triangle: 
\vspace{-0.1cm}
\begin{equation}
\label{generalftriangle}
F\ : \ \Z_{\ge0}[Cox] \quad  \longrightarrow \quad \Z[x,y], \qquad \oplus_i\Phi_i \mapsto \sum_i F(\Phi_i)
\vspace{-0.3cm}
\end{equation}
where each term $F(\Phi_i)$ is the original $F$-triangle introduced  in \S1.
The map $F$ is not only additive but also multiplicative due to Proposition \ref{chapoton3}. 

 Let us denote by $F_{\Phi}(x,y)$ the image polynomial of $\Phi\in \Z_{\ge0}[Cox]$ by the map $F$ and call it again  the $F$-triangle associated with $\Phi$. We see that the map $F$ factors the trace map as $F_\Phi(0,0)=Tr(\Phi)$.  One should also be cautious that the $F$-triangle $F_\Phi$ associated with a homogeneous element $\Phi \in \Z_{\ge0}[Cox]_l$ is not a homogeneous polynomial in $\Z[x,y]$ in the usual sense.

\begin{rem}
Compared with the definition of $F$-triangle in \S1, present definition is extended in two way. 1) Non-crystallographic root systems, i.e.\ of types $H_3$, $H_4$ and $I_2(p)$, are included (it was already mentioned in Remark 1.2), and 2) Positive linear combinations of triangles are included.
These extensions of the class of functions, allowing to sum up polynomials for different root systems and closed under the derivation, is a minor change, however, it is necessary for a formulation in the induction step and, also presumably, for the solution of conjectures in the following sections (c.f.\ Remark 4.2 and Remark 6.*) .
\end{rem}

Propositions 1 and 2 
are valid for the extended $F$-triangles. 
In particular, 

\begin{proposition} {\bf 4.} 
The $F$-triangle map $F$ commutes with $\partial$ on $\Z_{\ge0}[Cox]$ and $-\partial_y$ on $\Z[x,y]$. That is, one has \ $\partial_y F_\Phi= -F_{\partial \Phi}$ for $\Phi\in \Z_{\ge0}[Cox]$.
\end{proposition}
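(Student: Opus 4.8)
The plan is to reduce the claimed identity $\partial_y F_\Phi = -F_{\partial\Phi}$ to the case of a single monomial and then quote the extended derivative formula \eqref{partialderivative}. First I would note that every operation in sight is additive: the boundary map $\partial$ is by definition specified on monomials of $Cox$ and extended additively to all of $\Z_{\ge0}[Cox]$, the $F$-triangle map is additive by \eqref{generalftriangle}, and $\partial_y$ is linear on $\Z[x,y]$. Consequently both $\Phi \mapsto \partial_y F_\Phi$ and $\Phi \mapsto -F_{\partial\Phi}$ are additive in $\Phi$, so it suffices to establish the equality when $\Phi = \Phi(I)$ is a monomial, with $I$ its (possibly disconnected) Coxeter diagram.

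For such a monomial I would invoke the extended version of \eqref{partialderivative}. The excerpt already verifies, by combining parts 1 and 2 of Proposition 1 (multiplicativity of $F$ together with the Leibniz-type behaviour of $\partial_y$), that
\[
\partial_y F(\Phi(I)) \ =\ -\sum_{i\in I} F(\Phi(I\setminus\{i\}))
\]
holds for arbitrary, not necessarily irreducible, $\Phi(I)$. On the other hand, the definition $\partial\,\Phi(I) = \oplus_{i\in I}\Phi(I\setminus\{i\})$ together with additivity of $F$ gives
\[
F_{\partial\Phi} \ =\ \sum_{i\in I} F(\Phi(I\setminus\{i\})).
\]
Comparing the two displays yields $\partial_y F_\Phi = -F_{\partial\Phi}$ for monomials, and additivity then extends it to all of $\Z_{\ge0}[Cox]$.

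I expect no genuine obstacle here: the entire substance is already packaged in Proposition 1 and in the additive definition of $\partial$, so that Proposition 4 is in essence a restatement of the extended formula \eqref{partialderivative} in the language of the boundary operator $\partial$. The only points needing a line of care are that the additive extensions of $\partial$ and $F$ are compatible with the grading (so that the reduction to monomials is legitimate), and the correct tracking of the sign, which comes out exactly because $F$ intertwines $\partial$ with $-\partial_y$ rather than $\partial_y$.
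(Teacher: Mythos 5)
Your proof is correct and is essentially the argument the paper intends: the paper states Proposition 4 without a separate proof, precisely because it follows from the additive definitions of $\partial$ and $F$ together with the extension of \eqref{partialderivative} to non-irreducible root systems, which the paper verifies right after Proposition 1. Your write-up simply makes explicit the reduction to monomials and the sign bookkeeping that the paper leaves implicit.
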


\medskip
Let us give another important feature of $F$-triangles, which reflects that the cluster cone contains just one $l$-dimensional simplicial cone of negative vertices. 
We shall use this formula 
in the counting of Sturm roots in \S6 Discusssions 12.

\begin{proposition} {\bf 5.}
Let $\Phi\in \Z_{\ge0}[Cox]$ be a homogeneous element of rank $l$. Then 
\begin{equation}
\label{x=01}
F_{\Phi}(1,y)=\ Tr(\Phi) \ (-y)^l  \quad \text{and} \quad F_{\Phi}(0,y)=\Tr(\Phi) \ (1-y)^l.
\end{equation}
\end{proposition}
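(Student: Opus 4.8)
The plan is to prove the $x=0$ identity by induction on the rank $l$ and then to obtain the $x=1$ identity for free from the rotational symmetry of Proposition 2, so that essentially only one evaluation requires real work. Throughout I would work at the level of homogeneous elements, since the boundary map $\partial$ preserves homogeneity while lowering rank by one, and both sides of each claimed identity are linear in $\Phi$ (because $F$ and $\Tr$ are additive), so no reduction to single monomials is needed.

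For the $x=0$ case, set $g_\Phi(y):=F_\Phi(0,y)$ for $\Phi\in\Z_{\ge0}[Cox]_l$; the claim is $g_\Phi(y)=\Tr(\Phi)(1-y)^l$. The base case $l=0$ is immediate, since then $F_\Phi$ is the constant $\Tr(\Phi)$ and $(1-y)^0=1$. For the inductive step I would differentiate: by Proposition 4, $g_\Phi'(y)=(\partial_y F_\Phi)(0,y)=-F_{\partial\Phi}(0,y)$. Now $\partial\Phi\in\Z_{\ge0}[Cox]_{l-1}$, so the induction hypothesis together with the relation $\Tr(\partial\Phi)=l\cdot\Tr(\Phi)$ of Proposition 3\,ii) gives $F_{\partial\Phi}(0,y)=\Tr(\partial\Phi)(1-y)^{l-1}=l\,\Tr(\Phi)(1-y)^{l-1}$, whence $g_\Phi'(y)=-l\,\Tr(\Phi)(1-y)^{l-1}=\Tr(\Phi)\tfrac{d}{dy}(1-y)^l$. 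Integrating, $g_\Phi(y)=\Tr(\Phi)(1-y)^l+C$ for a constant $C$, and the anchoring value $g_\Phi(0)=F_\Phi(0,0)=\Tr(\Phi)$ forces $C=0$. This proves the first formula in \eqref{x=01}.

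For the $x=1$ case, I would simply invoke Proposition 2: since $\Phi$ is homogeneous of rank $l$, the symmetry \eqref{rotation} holds with a single common factor $(-1)^l$, and setting $x=1$ gives $F_\Phi(1,y)=(-1)^l F_\Phi(0,1-y)$. Substituting the already-established value $F_\Phi(0,1-y)=\Tr(\Phi)\bigl(1-(1-y)\bigr)^l=\Tr(\Phi)\,y^l$ yields $F_\Phi(1,y)=(-1)^l\Tr(\Phi)\,y^l=\Tr(\Phi)(-y)^l$, the second formula in \eqref{x=01}.

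The computations are routine, so the only point of care is the inductive bookkeeping: the differential recursion of Proposition 4 determines $g_\Phi$ only up to an additive constant, so one must supply a correct boundary value, and the natural choice $F_\Phi(0,0)=\Tr(\Phi)$ is exactly the factorization of $\Tr$ through $F$ noted before the statement. I expect this pinning-down of the integration constant to be the main (if modest) obstacle. As an alternative to the induction, one could evaluate $F_\Phi(0,y)$ directly from the definition \eqref{ftriangle}: since $F_\Phi(0,y)=\sum_m f_{0,m}(-y)^m$ and the $l$ negative simple roots $-\Pi$ are pairwise compatible (so every $m$-subset spans a cone of $\Delta(\Phi)$), one has $f_{0,m}=\binom{l}{m}$ and the binomial theorem gives $(1-y)^l$; in this route the sole nontrivial input is the mutual compatibility of the negative simple roots.
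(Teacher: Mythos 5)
Your proposal is correct, but its core runs along a genuinely different line from the paper's proof. For the evaluation at $x=0$, the paper argues combinatorially in one step: after reducing to a single summand by linearity, it observes that $F_\Phi(0,y)$ is the generating function of the cones contained in the negative simplicial cone spanned by $-\Pi$, so that $f_{0,m}=\binom{l}{m}$ and $F_\Phi(0,y)=\sum_m \binom{l}{m}(-y)^m=(1-y)^l$ --- which is exactly the ``alternative'' you sketch in your final paragraph. Your main argument instead proceeds by induction on the rank, feeding the differential recursion $\partial_y F_\Phi=-F_{\partial\Phi}$ (Proposition 4) and the trace identity $\Tr(\partial\Phi)=l\cdot\Tr(\Phi)$ (Proposition 3 ii)) into an integration in $y$, with the constant pinned down by the anchoring value $F_\Phi(0,0)=\Tr(\Phi)$. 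This is valid and non-circular, since Proposition 4 rests only on Chapoton's derivative formula \eqref{partialderivative} and not on the statement being proved. What your route buys is that it is purely formal: it uses only the algebraic structure already set up on $\Z_{\ge0}[Cox]$ (additivity, the boundary map, the trace, and their compatibilities), with no further appeal to the geometry of the cluster fan, so it would apply verbatim to any additive map satisfying the same recursions. What the paper's route buys is brevity and geometric content: the computation records precisely the feature of $\Delta(\Phi)$ that the proposition is meant to capture, namely that the negative simple roots span a single $l$-dimensional simplex all of whose faces are cones of the fan. The passage from $x=0$ to $x=1$ via the rotational symmetry \eqref{rotation} is identical in both treatments, and your remark that homogeneity of $\Phi$ is what makes the sign $(-1)^l$ uniform across summands is a point the paper leaves implicit.
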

\begin{proof} It is sufficient to prove the formula for each direct summand  of $\Phi$ since both hand sides are linear in $\Phi$. Let us first show the second formula. By definition, $F_{\Phi}(0,y)$ is the generating function of cones contained in the cone spanned by $-\Pi$, which is a simplicial cone of dimension $l$, and hence is equal to $\sum_{i=0}^l (-1)^i(\substack{l\\ i})y^i=(1-y)^l$.  Then, the first formula is obtained from this by applying the rotational symmetry \eqref{rotation}.
\end{proof}


\section{ Polyhedral cone $P_l$ of $F$-triangles of rank $l$ }

By extending the coefficients from $\Z_{\ge0}$ to $\R_{\ge0}$, we consider the infinite dimensional  convex cone $\R_{\ge0}[Cox]:=\Z_{\ge0}[Cox]\otimes_{\Z_{\ge0}}\R_{\ge0}$ which decomposes into the direct product 
of cones $\R_{\ge0}[Cox]_{l}:=\Z_{\ge0}[Cox]_{l}\otimes_{\Z_{\ge0}} \R_{\ge0}$ for $l\in\Z_{\ge0}$.\footnote
{We mean by a ``cone" simply a set which is invariant under the multiplication of $\R_{\ge0}$.
}
The maps $Tr$, $D$ and $F$ \eqref{generalftriangle} extends $\R$-linearly to those $\R$-cones to the $\R$-extensions of target spaces. We shall call the image $F(\Phi)$ for $\Phi\in \R_{\ge0}[Cox]_l$ again a $F$-triangle. 
Although the cone $\R_{\ge0}[Cox]_{l}$ for each rank $l\in \Z_{\ge0}$ is infinite dimensional (except for the case $l=1$),  we show that its image: the space of $F$-triangles of rank $l$ 
\begin{equation}
\label{polygon}
P_l \ := \ \{F(\Phi)\mid \Phi\in \R_{\ge0}[Cox]_{l} \ \& \ \Tr(\Phi)=1\}
\end{equation} 
is a {\it finite dimensional semi-algebraic convex polyhedron}\footnote
{We mean that $P_l$ is a convex semi-algebraic set in a finite dimensional $\R$-affine space whose closure is a polyhedron in usual sense, i.e.\ it is a finite intersection of closed half spaces, however $P_l$ itself may not be closed and some of facets of the polygon may be missing. 
}, but  which is un-bounded and non-closed except for the case $l=1$:
\begin{equation}
\label{p1}
P_1\quad =\quad \{F_{A_1}:=1-x-y\}.
\end{equation}
 The reason for these facts comes from the fact that $F$-triangles of type $I_2(p)$ for $p\in\Z_{\ge 2}$ are strongly algebraically dependent each other, and this fact leads to an introduction of  $F$-triangles of ``virtual" type $I_{2k}(s_1,\cdots,s_k)$ for $k\in\Z_{>0}$ with only finite number of non-negative real parameters $s_1,\cdots,s_k\in\R_{\ge0}$.
 
\bigskip 

\noindent
{\bf I.}
 We first restrict our attention to the subset of $\Z_{\ge0}[Cox]$ generated by rank 2 Coxeter groups  $I_2(p)$ ($p=2,3,4,\cdots$), where we recall that $I_2(2)=A_1^2$ is reducible and
$
F_{I_2(2)}=F_{A_1^2}=F_{A_1}^2= (1-x-y)^2 
 $. 

\begin{lem} 
\label{I_2}
Let us introduce a ``virtual infinity" polynomial
\begin{equation}
\label{Finfty}
F_\infty:= x(x-1) .
\end{equation}
Then, $F$-triangles of rank 2 Coxeter groups  are given by
\begin{equation}
\label{I2}
F_{I_2(p)}=F_{A_1}^2 + (p-2)F_\infty \qquad \text{for \ $p\in \Z_{\ge2}$}. 
\end{equation}
\end{lem}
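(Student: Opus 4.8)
The plan is to isolate the dependence on $p$ by studying the difference $G_p := F_{I_2(p)} - F_{A_1}^2$ and showing it is forced to be a scalar multiple of $F_\infty = x(x-1)$, with the scalar fixed by a single elementary count. The three ingredients I would use are Proposition 4 (compatibility of $F$ with the boundary $\partial$), Proposition 5 (the values along $x=0$ and $x=1$), and the fact that $I_2(p)$ has exactly $p$ positive roots.

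First I would compute the boundary of $I_2(p)$. Its Coxeter diagram has two nodes, and deleting either node leaves a single $A_1$, so $\partial I_2(p) = 2A_1$ uniformly in $p$ (including $p=2$, where $I_2(2)=A_1^2$). By Proposition 4, $\partial_y F_{I_2(p)} = -F_{2A_1} = -2F_{A_1} = -2(1-x-y)$. The identical computation for $A_1^2$ (note $\partial(A_1^2)=2A_1$, or simply differentiate $(1-x-y)^2$) gives $\partial_y F_{A_1}^2 = -2(1-x-y)$ as well. Hence $\partial_y G_p = 0$, so $G_p$ is a polynomial in $x$ alone, and since every rank-$2$ $F$-triangle has total degree at most $2$, it has degree at most $2$ in $x$.

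Next I would evaluate at the two endpoints. Because $I_2(p)$ is irreducible, $\Tr(I_2(p))=1$, so Proposition 5 gives $F_{I_2(p)}(1,y)=y^2$ and $F_{I_2(p)}(0,y)=(1-y)^2$; the corresponding values of $F_{A_1}^2=(1-x-y)^2$ are again $y^2$ and $(1-y)^2$. Therefore $G_p(0)=G_p(1)=0$, and a degree $\le 2$ polynomial in $x$ vanishing at $0$ and $1$ must equal $c_p\,x(x-1) = c_p F_\infty$ for some constant $c_p$.

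Finally I would pin down $c_p$ by matching one more coefficient. The coefficient of $(-x)^1(-y)^0$ in $F_{I_2(p)}$ is $f_{1,0}$, the number of one-dimensional cones of $\Delta(I_2(p))$ spanned by a single positive root, which is just the number of positive roots, namely $p$; so the coefficient of $x$ in $F_{I_2(p)}$ is $-p$. In $F_{A_1}^2$ this coefficient is $-2$, while in $c_p(x^2-x)$ it is $-c_p$; matching yields $c_p = p-2$ and hence the claimed identity. The proof is short, and essentially all of it is forced formally by Propositions 4 and 5; the one genuinely non-formal step is this last normalization of the scale $c_p$, which requires an honest count inside the cluster fan (equivalently one may read it off from $f_{2,0}=p-1$, the number of maximal cones spanned by two positive roots). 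I expect no real obstacle beyond making that single combinatorial input explicit, the conceptual key being the use of the boundary map to kill the $y$-dependence of the difference.
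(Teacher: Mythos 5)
Your proof is correct, but it is not the paper's proof: the paper simply quotes the explicit expansion $F_{I_2(p)}=1-2y-px+y^2+2xy+(p-1)x^2$ (the paper declares in Remark 1.2 that it freely uses the known tables of $F$-triangles) and verifies the identity $F_{I_2(p)}=(1-x-y)^2+(p-2)x(x-1)$ by direct algebra. You instead derive the formula structurally: Proposition 4 with $\partial I_2(p)=2A_1$ kills the $y$-dependence of $G_p=F_{I_2(p)}-F_{A_1}^2$, the endpoint formulas \eqref{x=01} force $G_p$ to vanish at $x=0,1$ and hence be a multiple of $F_\infty$, and the single count $f_{1,0}=p$ (number of positive roots) fixes the scalar. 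Both arguments are sound and non-circular, since Propositions 4 and 5 precede the lemma and are not derived from the rank-2 tables. What your route buys is economy of combinatorial input --- you need only one entry of the $f_{k,m}$ table rather than all of them --- and it makes visible exactly why the answer must have the shape $F_{A_1}^2+cF_\infty$; indeed your argument is essentially the rank-2 instance of the paper's own Lemma 4.1 (Euclidean division by $F_\infty$ with remainder pinned down by \eqref{x=01}), so it fits the paper's later machinery nicely. What the paper's route buys is brevity: given the known polynomial, the lemma is a one-line completion-of-the-square check.
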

\begin{proof} This follows from the explicit formula:
$$
\begin{array}{rcl}
F_{I_2(p)}& =&1-2y-px+y^2+2xy + (p-1)x^2\\ 
&=& (1-x-y)^2+ (p-2)x(x-1).
\end{array}
\vspace{-0.7cm}
$$
\end{proof}

Let us introduce the  $F$-triangle of {\it virtual type} $I_2(p)$ for $p\in \R_{\ge2}$ by the same formula \eqref{I2} by extending the domain of $p$ from $\Z_{\ge2}$ to $\R_{\ge2}$.  Then, we obtain the following elementary but non-trivial fact.
 
\begin{corollary}
The set $P_2$ of all $F$-triangles of rank 2 with trace 1 is equal to the half line of all $F$-triangles of virtual type $I_2(p)$ for all $p\in \R_{\ge2}$.  That is, 
\begin{equation}
\begin{array}{rcl}
\label{p2}
P_2 & = & \{ F_{I_2(p)}=F_{I_2(2)}+sF_\infty \mid  s=p-2\in \R_{\ge0}\} \\
\\
& = & \overset{I_2(2)}{\circ} \!\!\!\!\!\!-\!\!\!-\!\!\!\!-\!\!\!-\!\!\!\!-\!\!\!\!\!\!\overset{I_2(3)}{\circ}\!\!\!\!\!-\!\!\!\!\! -\!\!\!-\!\!-\!\!\!\!-\!\!\!\!\!\!\overset{I_2(4)}{\circ}\!\!\!\!\!-\!\!\!\!\! -\!\!\!-\!\!-\!\!\!\!\!-\!\!\!\!\!\overset{I_2(5)}{\circ}\!\!\!\!\!-\!\!\!\!\!-\!\!\!-\!\!-\!\!\!\!\!-\!\!\!\!\!\overset{I_2(6)}{\circ}\!\!\!\!\!-\!\!\!\!\!-\!\!\!-\!\!-\!\! \circ \!\!-\!\!\!-\!\!- \cdots .
\end{array}
\end{equation}
\end{corollary}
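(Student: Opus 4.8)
The plan is to compute the image $F\bigl(\R_{\ge0}[Cox]_2\bigr)$ directly and intersect it with the trace-one slice in \eqref{polygon}, exploiting that in rank $2$ the monoid $Cox$ is as simple as possible. First I would record that every rank-$2$ class in $Cox$ has the form $I_2(p)$ with $p\in\Z_{\ge2}$: the only irreducible rank-$2$ Coxeter groups are the dihedral groups $I_2(p)$ ($p\ge3$), and the single reducible one, $A_1\times A_1$, is by convention $I_2(2)$. Hence a general element of $\R_{\ge0}[Cox]_2$ is a finitely supported non-negative combination $\Phi=\sum_{p\ge2}a_p\,I_2(p)$ with $a_p\in\R_{\ge0}$, and the constraint in \eqref{polygon} reads $\Tr(\Phi)=\sum_p a_p=1$. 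The conceptual heart of the statement is that this single linear condition collapses the infinite-dimensional cone $\R_{\ge0}[Cox]_2$ onto a one-dimensional ray.

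Next I would apply the $\R$-linear $F$-triangle map together with Lemma \ref{I_2}. Substituting \eqref{I2} gives
$$F_\Phi=\sum_p a_p\bigl(F_{A_1}^2+(p-2)F_\infty\bigr)=\Bigl(\sum_p a_p\Bigr)F_{A_1}^2+\Bigl(\sum_p a_p(p-2)\Bigr)F_\infty .$$
The trace condition forces the first coefficient to equal $1$, so $F_\Phi=F_{I_2(2)}+s\,F_\infty$ with $s:=\sum_p a_p(p-2)$. Since $a_p\ge0$ and $p-2\ge0$ throughout, one has $s\ge0$; this establishes the forward inclusion $P_2\subset\{F_{I_2(2)}+s\,F_\infty\mid s\ge0\}$.

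For the reverse inclusion I would exhibit, for each prescribed $s\ge0$, an explicit trace-one element realizing it: choose an integer $n\ge s+2$ and set $\Phi=(1-a)\,I_2(2)+a\,I_2(n)$ with $a:=s/(n-2)\in[0,1]$; then $\Tr(\Phi)=1$ and $\sum_p a_p(p-2)=a(n-2)=s$. Equivalently, the virtual type $I_2(s+2)$ appearing on the half line of \eqref{p2} realizes $s$ directly. This proves surjectivity of $s\mapsto F_{I_2(2)}+s\,F_\infty$ onto the stated ray, giving $P_2$ as claimed, with the integer values $s=0,1,2,\dots$ marking the nodes $I_2(2),I_2(3),I_2(4),\dots$ of \eqref{p2}.

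There is no serious obstacle here; the only point deserving care is that the parametrization is \emph{faithful}, i.e.\ that distinct values of $s$ yield distinct triangles. I would verify this by observing that $F_{A_1}^2=(1-x-y)^2$ involves the monomial $y^2$ whereas $F_\infty=x(x-1)$ does not, so the two polynomials are linearly independent in $\Z[x,y]$; hence $s\mapsto F_{I_2(2)}+s\,F_\infty$ is injective and $P_2$ is a genuine one-dimensional half line, in particular unbounded, as asserted for $P_l$ with $l\ge2$.
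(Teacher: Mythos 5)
Your proof is correct and follows essentially the same route as the paper: both reduce everything to the two polynomials $F_{A_1}^2$ and $F_\infty$ via Lemma \ref{I_2}, obtain the forward inclusion by linearity of $F$, and obtain the reverse inclusion by realizing a real parameter $s\ge 0$ as a non-negative (convex) combination of $I_2(2)$ and $I_2(n)$ for a sufficiently large integer $n$ --- exactly the paper's choice of $p_0>s+2$. The only differences are cosmetic: you work directly on the trace-one slice where the paper works with the cone $\{tF_{I_2(s+2)}\mid t,s\ge0\}$ and slices afterwards, and you add a (harmless, and welcome) injectivity check that the paper omits.
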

\begin{proof}  This is equivalent to that the set of all $F$-triangles of rank 2 (= the set of  non-nenegative linear span of $F_{I_2(p)}$ ($p=2,3,4,\cdots$))  is equal to the set  
$$
F(\Z_{\ge0}[Cox]_{\R_{\ge0},2})= \{ t \big(F_{A_1}^2 + s F_\infty  \big)=t F_{I_2(s+2)} \mid t\in \R_{\ge0}, \ s\in \R_{\ge0} \} \ .
$$
RHS is a convex cone in the sense that any non-negative $\R$-linear combination of its elements belongs to itself. Due to {\it Lemma} \ref{I_2}, any $F_{I_2(p)}$ ($p\in\Z_{\ge2}$) is contained in this set. So, their non-negative linear combinations, i.e.~all $F$-triangles of rank 2 in LHS, are contained in RHS. 

Oppositely, let us show that any element $tF_{I_2(p)}$ for $t, s=p-2 \in \R_{\ge0}$ in RHS belongs to LHS.  If $t=0$, there is nothing to prove, and we assume $t\not=0$. Let $p_0$ be an integer such that $s+2<p_0$.  Then, we have 
$t \big(F_{A_1}^2 +sF_\infty\big)= t\frac{s}{p_0-2} F_{I_2(p_0)}+ t\frac{p_0-s-2}{p_0-2}F_{A_1}^2$ which belongs to LHS.
\end{proof}

\noindent
{\bf II.}  
\ {\bf Definition. }  Inspired by {\bf I}, for any $l\in\Z_{\ge1}$,  let us introduce $F$-triangles of {\it virtual type} $I_{l}(s_1,\cdots,s_{[l/2]})$ of rank $l$ by \footnote
{The use of the terminology ``$F$-triangle" is justified in the following Lemma \ref{Il}.}
\begin{equation}
\label{virtual}
\begin{array}{c}
 F_{I_{l}(s_1,\cdots,s_{[l/2]})}:=F_{A_1} ^{l} +  \sum_{i=1}^{[l/2]} s_i F_{A_1}^{l-2i} F_\infty^i   
\end{array}
\end{equation}
where $(s_1,\cdots,s_{[l/2]})\in \R^{[l/2]}$ are parameters belonging to the following  set\footnote
{The set $\Sigma_{[l/2]}$ carries a natural additive semi-group structure with respect to the coordinates $\underline{s}=(s_1,\cdots,s_{[l/2]})$. However, in case of $[l/2]=1$, there is an unfortunate discrepancy between the parameter $p\in \R_{\ge2}$ in the formula \eqref{I2} and the parameter $s_1$ in the formula \eqref{virtual}. Namely, they are related by the relation $p-2=s_1$.
}
\begin{equation}
\label{sigmak}
\begin{array}{c}
\Sigma_{[l/2]}:= \bigcup_{i=0}^{[l/2]}   \left\{s_1>0,\cdots,s_{i}>0, s_{i+1}=\cdots=s_{[l/2]}=0\right\}  .
\end{array}
\end{equation}
We note that the set $\Sigma_{[l/2]}$ is closed by addition. That is, it is a semi group.

By the use of those $F$-triangles of virtual types, we now describe the set 
$$
F(\R_{\ge0}[A_1, \{I_2(p)\}_{p\in\Z_{\ge2}}]_l)
$$ 
of all $F$-triangles of rank $l$ which are polynomials of $F_{A_1}$ and $F_{I_2(p)}$ ($p\in\Z_{\ge2}$). 

\begin{lem}  
\label{Il}
The set is equal to the semi-algebraic set of the cone over the $F$-triangles of virtual type $I_l$
\begin{equation}
\label{I2k}
\begin{array}{rcl}
\vspace{0.2cm}
&&F(\R_{\ge0}[A_1, \{I_2(p)\}_{p\in\Z_{\ge2}}]_l) \\
&= & \{t F_{I_{l}}(s_1,\cdots,s_{[l/2]}) \mid t\in\R_{\ge0}, (s_1,\cdots,s_{[l/2]})\in \Sigma_{[l/2]} \} 
\end{array}
\end{equation}
\end{lem}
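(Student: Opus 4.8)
The plan is to pass to linear coordinates adapted to the two building blocks $u:=F_{A_1}=1-x-y$ and $v:=F_\infty=x(x-1)$, and to reduce the asserted set equality to a statement about non-negative combinations of products of linear forms $1+cz$ in one auxiliary bookkeeping variable $z$.

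First I would record that the monomials $u^{l-2k}v^k$ for $0\le k\le [l/2]$ are linearly independent in $\R[x,y]$: viewed as polynomials in $y$ over $\R[x]$, the factor $v^k$ carries no $y$ while $u^{l-2k}$ has $y$-degree $l-2k$ with leading coefficient $(-1)^{l-2k}$, so these polynomials have pairwise distinct $y$-degrees $l,l-2,\dots$ and are therefore independent. Hence every element of $F(\R_{\ge0}[A_1,\{I_2(p)\}]_l)$ has well-defined coordinates $(s_0,\dots,s_{[l/2]})$ in this basis. Using that $F$ is multiplicative and Lemma \ref{I_2}, the image of a monomial $A_1^{\,l-2m}I_2(p_1)\cdots I_2(p_m)$ is
$$u^{l-2m}\prod_{i=1}^m\bigl(u^2+c_iv\bigr)=\sum_{k=0}^{m}e_k(c_1,\dots,c_m)\,u^{l-2k}v^k,\qquad c_i:=p_i-2\in\Z_{\ge0},$$
where $e_k$ is the $k$-th elementary symmetric polynomial; in coordinates this is $(e_0,\dots,e_m,0,\dots,0)$ with $e_0=1$. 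Thus, introducing $z$ via $u^{l-2k}v^k\leftrightarrow z^k$, the left-hand set becomes the convex cone generated by the polynomials $\prod_{i=1}^m(1+c_iz)$ with $0\le m\le[l/2]$ and $c_i\in\Z_{\ge1}$ (entries $c_i=0$ only reproduce lower-$m$ generators). On the other side $F_{I_l}(s_1,\dots,s_{[l/2]})$ has coordinates $(1,s_1,\dots,s_{[l/2]})$, so the right-hand set is $\{t(1,s_1,\dots,s_{[l/2]}):t\ge0,\ (s_i)\in\Sigma_{[l/2]}\}$, which is exactly the set $P$ of non-negative vectors whose support is an initial segment $\{0,1,\dots,j\}$ (or empty). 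One checks that $P$ is a convex cone: if the $k$-th coordinate of a convex combination of vectors in $P$ vanishes, both summands vanish there, hence in all higher coordinates.

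For the inclusion of the left set in the right one, each generator $\prod(1+c_iz)$ with $c_i\ge1$ has strictly positive coefficients up to its degree $m\le[l/2]$ and so lies in $P$; since $P$ is a convex cone, so does every non-negative combination. For the reverse inclusion I would induct on the degree $j$ of the target. Given $q=\sum_{k=0}^{j}s_kz^k$ with all $s_k>0$ and $j\le[l/2]$, choose a large integer $c$ and subtract $\tfrac{s_j}{c^{\,j}}(1+cz)^j$, which corresponds to the monomial $\tfrac{s_j}{c^{\,j}}A_1^{\,l-2j}I_2(c+2)^j$. The remainder has $k$-th coefficient $s_k-s_j\binom{j}{k}c^{\,k-j}$ for $k<j$; since each correction tends to $0$ as $c\to\infty$, for $c$ large all these stay strictly positive, so the remainder is a degree-$(j-1)$ polynomial with positive coefficients, handled by the induction hypothesis (the base case $j=0$ being $s_0\cdot 1$). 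Adding back the peeled term exhibits $q$ as a non-negative combination of generators.

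The main obstacle is precisely this reverse inclusion: an individual generator $\prod(1+c_iz)$ is far from an arbitrary positive vector, its coefficients obeying Newton/Maclaurin inequalities, so it is not obvious that positive-coefficient polynomials violating those inequalities (e.g.\ $1+z+z^2$) are reachable. The point that makes the induction work is that the generators $(1+cz)^j$ with $c$ large give extreme rays pointing essentially in the pure top-degree direction $z^j$ (after normalizing by $c^{\,j}$), which lets one absorb the leading coefficient while perturbing the lower ones arbitrarily little. I would finally note that this peeling uses only integer $c$, so no enlargement beyond $\R_{\ge0}$-coefficients on integral types is required, and translate back through the coordinate isomorphism to conclude the claimed equality of $F$-triangle sets.
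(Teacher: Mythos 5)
Your proof is correct and follows essentially the same route as the paper: the forward inclusion via positivity of the elementary symmetric coefficients $e_k(c_1,\dots,c_m)$ together with the convex-cone property of $\Sigma_{[l/2]}$, and the reverse inclusion by inductively peeling off a scaled product of $I_2$-triangles with large parameter to absorb the top coefficient while perturbing the lower coefficients arbitrarily little. Your bookkeeping variable $z$ and the choice of equal parameters $c_i=c$ merely streamline the paper's argument (which uses possibly distinct large $p_i$ and first reduces odd rank to even by dividing out $F_{A_1}$), and your explicit linear-independence check for the monomials $F_{A_1}^{l-2k}F_\infty^k$ makes precise a point the paper leaves implicit.
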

\begin{proof}
If the rank is odd $2k+1$, the $F$-triangle in LHS should be divisible by $F_{A_1}$. So, dividing by $F_{A_1}$, we can reduce this case  to the even rank $2k$ case.
The proof for the case $l=2$ is given in {\bf I} .

Let us consider the case $l=2k$ for $k\ge 2$. Since $F_{A_1^2}=F_{A_1}^2=F_{I_2(2)}$, any element of LHS of \eqref{I2k} is a homogeneous polynomials of degree $k$ of the elements of the form \eqref{I2} 
 with non-negative coefficients.  Each monomial $F_{I_2(p_1)}\cdots F_{I_2(p_k)}$of degree $k$, i.e.~a product of $k$ elements of \eqref{I2}, belongs already to RHS set of \eqref{I2k} ({\it Proof.} We need only to take care when $s_i$'s become zero. However, it is clear that according to the number, say $i$, of factors $F_{I_2(p_j)}$ with $p_j-2=0$, we obtain $s_1, \cdots, s_{k-i}$ are positive and $s_{k-i+1}=\cdots=s_k=0$, which exactly belongs to the set $\Sigma_k$ \eqref{sigmak}).  Since $\Sigma_k$ is a convex cone,  any non-negative linear combination of such monomials is also contained in $\Sigma_k$. Thus LHS of \eqref{I2k} is contained in RHS.

The opposite inclusion relation of \eqref{I2k}  is verified by induction on $k$ as follows. Let $tF_{I_{2k}}(s_1,\cdots,s_k)$ be an element of RHS. If $s_{k}=0$, then it is divisible by $F_{I_2(2)}$ and belongs to a smaller dimensional stratum $\R_{\ge0}\times \Sigma_{k-1}$, and, so, we can apply induction hypothesis. Assume $s_{k}>0$. Let us consider a monomial $F_{I_2(p_1)}\cdots F_{I_2(p_k)}$ in LHS.  Taking $p_1,\cdots,p_k\in \Z_{\ge2}$ sufficiently large, we may assume that all coefficients of 
$\frac{s_{k}}{\prod_{i=1}^{k}(p_i-2)} F_{I_2(p_1)}\cdots F_{I_2(p_k)}=s_k \prod_{i=1}^{k}(\frac{F_{I_2(2)}}{p_i-2}+F_{\infty})$ (except for the last term $F_\infty^k$ whose coefficient is equal to $s_k$) are sufficiently small, so that the difference: 
$F_{I_{2k}}(s_1,\cdots,s_k) -\frac{s_{k}}{\prod_{i=1}^{k}(p_i-2)} F_{I_2(p_1)}\cdots F_{I_2(p_k)}$ are non-negative. In particular, the coefficient of $F_\infty^k$ in the difference is zero so that the difference is factored by $F_{I_2(2)}$ and belongs to a smaller dimensional stratum $\R_{\ge0}\times\Sigma_{k-1}$, for which we apply the induction hypothesis.
\end{proof}

\begin{rem} Note that $\Sigma_k$ is an unbounded semi-algebraic convex polyhedron in $(\R_{\ge0})^{k}$, admitting constant multiplication: $(s_1,\cdots,s_k)\mapsto (cs_1,\cdots,cs_k)$ for $c\in\R_{\ge0}$. One  should not confuse $c$ with the scaling parameter $t$ in Lemma \ref{Il}.  
As a  cone over $\Sigma_k$,  some faces of $(\R_{\ge0})^{k+1}$ is missing in $F(\R_{\ge0}[A_1, \{I_2(p)\}_{p\in\Z_{\ge2}}]_l)$.
E.g.\ if $l=2$, the boundary edge $\{ (p-2)F_\infty\mid p\in\R_{\ge2} \}$ is missing.
\end{rem}
\begin{rem}
Associated with a $(s_1,\cdots,s_k)\in \Sigma_k$ for $k\ge2$,  let us consider  the  polynomial equation $T^k+\sum_{i=1}^ks_iT^{k-i}=0$. The equation is not in general a totally real in the sense that some root of the equation may note be real numbers. This create a difficult problem to stratify the set $\Sigma_k$ according to the number of real roots. On the other hand, it is easy to show 

\smallskip
\noindent
{\bf Fact.}  {\it For any root, say $\alpha$, of the equation, we have 
 $\Re(\alpha)\le0$}. 
\end{rem}

\noindent
{\bf III.}  We return to the description of the set $P_l$.

\noindent
\begin{proposition} {\bf 5.} 
{\it The set $P_l  \subset \R_{\ge0}[x,y]$  is a finite dimensional convex semi-algebraic polyhedron. The set $F(\R_{\ge0}[Cox]_{l})$ is the cone over $P_l$.   
\begin{equation}
\label{cone}
F(\R_{\ge0}[Cox]_{l}) \quad = \quad \R_{\ge0} P_l. 
\end{equation}}
\end{proposition}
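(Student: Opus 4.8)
The plan is to realize $C:=F(\R_{\ge0}[Cox]_l)$ as a finitely generated convex cone lying in a finite-dimensional subspace of $\R[x,y]$, and then to recover $P_l$ as a transversal affine slice of $C$.

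\emph{Step 1: $C$ is a finite-dimensional convex cone.} Since $F$ is $\R$-linear and $\R_{\ge0}[Cox]_l$ is a convex cone, $C$ is a convex cone. For finite dimensionality I use that, among all irreducible finite Coxeter groups, only the rank-$2$ family $I_2(p)$ is infinite, every rank $\ge 3$ carrying just finitely many irreducibles. Factor a rank-$l$ monomial $\Phi$ as $\Phi=\Theta\times\Psi$, where $\Theta$ is the product of its factors of type $A_1$ or $I_2(p)$ and $\Psi$ is the product of its irreducible factors of rank $\ge 3$. By Proposition~1 and Lemma~\ref{I_2}, $F_\Phi=F_\Psi\cdot Q(F_{A_1},F_\infty)$ for some polynomial $Q$, so $F_\Phi$ lies in the span of the finitely many products $F_\Psi\,F_{A_1}^{\,m}F_\infty^{\,i}$ with $m+2i+\rank\Psi=l$; since there are only finitely many $\Psi$ of rank $\le l$, this span is finite-dimensional and contains $C$.

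\emph{Step 2: the slice.} The evaluation $\tau(g):=g(0,0)$ is a linear functional with $\tau(F_\Phi)=\Tr(\Phi)$, and since $\Tr(\Phi)>0$ for every nonzero $\Phi\in\R_{\ge0}[Cox]_l$ we obtain $\tau>0$ on $C\setminus\{0\}$. Hence every nonzero $g\in C$ factors as $g=\tau(g)\,\bigl(g/\tau(g)\bigr)$ with $g/\tau(g)\in P_l=C\cap\{\tau=1\}$; this gives $C=\R_{\ge0}P_l$, which is \eqref{cone}, and shows $P_l$ is convex and nonempty (it contains $F_{A_1}^{\,l}$). A short limiting argument then yields $\overline{P_l}=\overline{C}\cap\{\tau=1\}$, so the statement reduces to proving that $\overline{C}$ is a polyhedral cone and that $C$ is semi-algebraic.

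\emph{Step 3: polyhedrality and semi-algebraicity.} Grouping the monomials by their higher part $\Psi$, and using that the conic hull of a union is the Minkowski sum of the conic hulls, I write $C=\sum_\Psi C_\Psi$ over the finitely many $\Psi$, where $C_\Psi=F_\Psi\cdot F(\R_{\ge0}[A_1,\{I_2(p)\}]_{\,l-\rank\Psi})$. By Lemma~\ref{Il} each inner factor is the cone over $\Sigma_{[(l-\rank\Psi)/2]}$; the set $\Sigma_k$ of \eqref{sigmak} is convex and semi-algebraic with polyhedral closure (its closure is the full orthant, only some lower faces being absent), and multiplication by the nonzero polynomial $F_\Psi$ is an injective linear map, so each $C_\Psi$ inherits these three properties. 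A finite Minkowski sum preserves convexity and semi-algebraicity, and satisfies $\overline{\sum_\Psi C_\Psi}=\sum_\Psi\overline{C_\Psi}$, a sum of polyhedral cones and hence polyhedral. Therefore $\overline{C}$ is polyhedral and $C$ is semi-algebraic, and intersecting with $\{\tau=1\}$ completes the proof.

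\emph{Main obstacle.} The crux is the finite generation of $C$ despite the infinite family $I_2(p)$; this is exactly what Lemmas~\ref{I_2} and~\ref{Il} provide, by collapsing $\{F_{I_2(p)}\}_p$ into the $2$-plane spanned by $F_{A_1}^{\,2}$ and $F_\infty$. The remaining delicate point is bookkeeping of the missing faces: $\Sigma_k$ is not closed, so one must track which facets survive through the Minkowski sum and the slice, and this is the source of the non-closedness of $P_l$ noted in the statement.
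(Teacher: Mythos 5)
Your proof is correct and follows essentially the same route as the paper: you group each monomial by its rank-$\ge 3$ irreducible part $\Psi$, invoke Lemma~\ref{Il} to collapse the infinite $I_2(p)$ family into the cone over $\Sigma_k$, express $F(\R_{\ge0}[Cox]_l)$ as a finite Minkowski sum of the resulting polyhedral semi-algebraic cones, and recover $P_l$ as the trace-one slice using positivity of the trace on nonzero elements. Your treatment is in fact a bit more careful than the paper's terse version (explicit closure bookkeeping, and slicing at $\Tr=1$ where the paper has a typo $\Tr(F)=0$), but the decomposition and key lemma are identical.
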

\begin{proof}  For $k\in \Z_{\ge0}$, consider the set 
$J_k$ of all  isomorphsim classes of finite Coxeter groups whose irreducible factors are of rank greater or equal than 3. Obviously, we have $\#J_k<\infty$. 
Using the set $J_k$, any $F$-triangle $F$ of rank $l$ has the  expression
$$
\begin{array}{c}
F= \sum_{k=0}^{l}  \big(\sum_{\Phi\in J_k}  F_\Phi \ H_{\Phi} \big)
\end{array}
$$ 
where $H_{\Phi} \in F(\R_{\ge0}[A_1, \{I_2(p)\}_{p\in\Z_{\ge2}}]_{l-k})$.  
This implies that
$$
F(\R_{\ge0}[Cox]_{l})= \sum_{k=0}^{l}  \big(\sum_{\Phi\in J_k} F_\Phi \ F(\R_{\ge0}[A_1, \{I_2(p)\}_{p\in\Z_{\ge2}}]_{l-k}).
\big)
$$
That is,  $F(\R_{\ge0}[Cox]_{l})$ is expressed as a finite sum of sets, where each summand set is, in view of Lemma \ref{Il}, a convex semi-algebraic polyhedral cone. Thus, $F(\R_{\ge0}[Cox]_{l})$ is also a convex semi-algebraic polyhedral cone, and its hyperplane cut $P_l$ by the equation $\Tr(F)=0$ is also a convex semi-algebraic polyhedron (unbounded).

In order to see \eqref{cone} that any ray in $F(\R_{\ge0}[Cox]_{l})$ intersects with $P_l$,  we have only to notice  $\{F_\Phi \in F(\R_{\ge0}[Cox]_{l}) \mid \Tr(F_\Phi)=0\}=\{0\}$.  
\end{proof}

In general, if a closed set  is a cone over a bounded polyhedron, then it is determined as a convex hull of the finite rays corresponding to the vertices of the polyhedron. However, $P_l$ is not bounded as we saw \eqref{p2} so that the data of its vertices is not sufficient to recover it. Let us show more precise description of the unboundedness of $P_l$.

Recall the definition \eqref{virtual} of the $F$-triangle of virtual type $I_l(\underline{s})$. We separate it into two parts, and interpret that it is  the consequence of the semi-group $\Sigma_{[l/2]}\simeq \{\sum_{i=1}^{[l/2]} s_i F_{A_1}^{l-2i} F_\infty^i \mid (\underline{s})\in \Sigma_{[l/2]}\}$ action on the $F$-triangle $F_{A_1}^l$. Then, we ask further, whether, at another point in $P_l$, the semi-group $\Sigma_{[l/2]}$ acts also? The following Lemma gives an answer to this question.

\begin{lem} 
\label{semigroupaction}
If $F\in P_l$ is an $F_{A_1^l}$-interior point of $P_l$ (that is, $F$ is an interior of the interval  $P_l\cap \{\lambda F + (1-\lambda)F_{A_1^l}\mid \lambda\in\R\}$), then, for any element $(\underline{s})\in \Sigma_{[l/2]}$,
the image of the correspondence  
\begin{equation}
\label{translation}
\begin{array}{c}
F \quad \mapsto \quad  F\ +\ \sum_{i=1}^{[l/2]} s_i F_{A_1}^{l-2i} F_\infty^i 
\end{array}
\end{equation} 
belongs to $P_l$ again. 
\end{lem}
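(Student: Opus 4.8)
The plan is to recognize the translation vector in \eqref{translation} as the difference $F_{I_l(\underline{s})}-F_{A_1^l}$, and then exhibit the translated point $F+v(\underline{s})$ as a genuine convex combination of two points already known to lie in $P_l$. Write $v(\underline{s}):=\sum_{i=1}^{[l/2]} s_i F_{A_1}^{l-2i} F_\infty^i$, so that \eqref{translation} is the map $F\mapsto F+v(\underline{s})$. First I would record two structural facts. By the defining formula \eqref{virtual}, $v(\underline{s})=F_{I_l(\underline{s})}-F_{A_1^l}$; in particular $F_{A_1^l}+v(\underline{s})=F_{I_l(\underline{s})}\in P_l$ by Lemma \ref{Il}. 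Moreover $\underline{s}\mapsto v(\underline{s})$ is $\R$-linear, and $\Sigma_{[l/2]}$ is stable under multiplication by $\R_{\ge0}$ (the cone property noted in the Remark following Lemma \ref{Il}), so $F_{A_1^l}+c\,v(\underline{s})=F_{I_l(c\underline{s})}\in P_l$ for every $c\in\R_{\ge0}$. I would also note that $\Tr(v(\underline{s}))=0$, since $F_\infty(0,0)=0$ forces each summand to vanish at the origin; this is the reason the translation has a chance of preserving membership in the trace-$1$ slice $P_l$ at all.

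Next I would extract the geometric content of the hypothesis. Setting $w:=F-F_{A_1^l}$, the line $\{\lambda F+(1-\lambda)F_{A_1^l}\mid\lambda\in\R\}$ is exactly $\{F_{A_1^l}+\lambda w\}$, with $\lambda=1$ giving $F$. The assumption that $F$ is an $F_{A_1^l}$-interior point says precisely that $\lambda=1$ is interior to the $\lambda$-interval cut out by $P_l$, hence there exists $\eta>0$ with
\[
F':=F_{A_1^l}+(1+\eta)w\ \in\ P_l .
\]
This is the ``extra room'' beyond $F$ in the direction away from $F_{A_1^l}$ that will absorb the rescaling.

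The heart of the argument is then to choose the weights. Put $a:=\tfrac{1}{1+\eta}\in(0,1)$, $b:=1-a=\tfrac{\eta}{1+\eta}\in(0,1)$, and $\underline{s}':=\tfrac{1+\eta}{\eta}\,\underline{s}\in\Sigma_{[l/2]}$ (legitimate by the cone property). Using $a(1+\eta)=1$ and $b\cdot\tfrac{1+\eta}{\eta}=1$, a direct computation gives
\[
a\,F'\ +\ b\,F_{I_l(\underline{s}')}\ =\ (a+b)F_{A_1^l}\ +\ w\ +\ v(\underline{s})\ =\ F+v(\underline{s}).
\]
Since $F'\in P_l$ and $F_{I_l(\underline{s}')}\in P_l$, and $P_l$ is convex by the preceding proposition, the left-hand side lies in $P_l$, which is the assertion. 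The degenerate case $F=F_{A_1^l}$ (where the ``interval'' is a single point) is not covered by this argument, but there the claim is just $F_{A_1^l}+v(\underline{s})=F_{I_l(\underline{s})}\in P_l$, i.e.\ Lemma \ref{Il} itself. I expect no serious obstacle here: the only non-formal ingredient is the cone-homogeneity of $\Sigma_{[l/2]}$ in $\underline{s}$, which is exactly what lets the factor $1+\eta$ supplied by the interior hypothesis be matched by rescaling the semigroup parameter; the rest is convexity bookkeeping. The one point demanding care is checking that the interior condition genuinely yields a point $F'$ strictly beyond $F$ on the $P_l$-side (so that $b>0$), which is where the word ``interior'' in the definition is used in an essential way.
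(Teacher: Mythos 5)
Your proof is correct and is essentially the paper's own argument in different notation: your point $F'=F_{A_1^l}+(1+\eta)w$ is the paper's $G$, and your convex combination $aF'+bF_{I_l(\underline{s}/b)}$ (with $b=\eta/(1+\eta)$) is exactly the paper's $\lambda F_{I_l(\underline{s}/\lambda)}+(1-\lambda)G$, both resting on the cone property of $\Sigma_{[l/2]}$, linearity of $\underline{s}\mapsto\sum_i s_iF_{A_1}^{l-2i}F_\infty^i$, and convexity of $P_l$. Your explicit treatment of the degenerate case $F=F_{A_1^l}$ and of the trace normalization are small touches of added care, not a different route.
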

\begin{proof}
The assumption on $F$ means that there exists $G\in P_l$ such that $F$ is an interior of the interval $[F_{A_1^l},G]$. That is, $F=\lambda F_{A_1^l}+(1-\lambda)G$ for some $\lambda\in (0,1)$. Then $P_l \ni   \lambda F_{I_l(\underline{s}/\lambda)} + (1-\lambda)G =F +\lambda( F_{I_l(\underline{s}/\lambda)}-F_{A_1^l})=F+F_{I_l(\underline{s})}-F_{A_1^l}$.
\end{proof}

Let us call \eqref{translation} the {\it translation action} of the element $(\underline{s})\in \Sigma_{[l/2]}$ on $F$. It is clear that $\Sigma_{[l/2]}$ still acts again on the element after an action, and that one has the associativity low for the composition of the actions. 

Obvously, $F_{A_1^l}$-interior of $P_l$ contains  interior of $P_l$, we obtain

\begin{corollary}
The  $F_{A_1^l}$-interior of $P_l$, and hence, the interior $\overset{\circ}{P}_l$ 
of $P_l$, is invariant under the translation action of the semi-group $\Sigma_{[l/2]}$.
\end{corollary}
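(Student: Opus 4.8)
The plan is first to upgrade the conclusion of Lemma~\ref{semigroupaction} from ``the translate lies in $P_l$'' to ``the translate lies in the $F_{A_1^l}$-interior'', and then to obtain the statement for the ordinary interior by a soft homeomorphism argument. We may assume $(\underline{s})\neq0$, since otherwise \eqref{translation} is the identity. Fix an $F_{A_1^l}$-interior point $F$, so $F=\lambda F_{A_1^l}+(1-\lambda)G$ with $G\in P_l$ and $\lambda\in(0,1)$. Exactly as in the proof of Lemma~\ref{semigroupaction},
\[
F\ +\ \sum_{i=1}^{[l/2]} s_i F_{A_1}^{l-2i} F_\infty^i \ =\ \lambda\,F_{I_l(\underline{s}/\lambda)}\ +\ (1-\lambda)\,G ,
\]
where $\underline{s}/\lambda\in\Sigma_{[l/2]}$ because $\Sigma_{[l/2]}$ is stable under positive scaling. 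As $(\underline{s})\neq0$, the point $F_{I_l(\underline{s}/\lambda)}$ is itself $F_{A_1^l}$-interior: from \eqref{virtual} one has $F_{I_l(w)}=\mu F_{A_1^l}+(1-\mu)F_{I_l(w/(1-\mu))}$ for every $\mu\in(0,1)$, exhibiting it strictly between $F_{A_1^l}$ and another point of $P_l$.

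The matter therefore reduces to the following convexity statement, which I expect to be the crux: if $A\in P_l$ is $F_{A_1^l}$-interior, $B\in P_l$ and $C:=\lambda A+(1-\lambda)B$ with $\lambda\in(0,1)$ satisfies $C\neq F_{A_1^l}$, then $C$ is $F_{A_1^l}$-interior. To prove it I would translate $F_{A_1^l}$ to the origin and put $a:=A-F_{A_1^l}$, $b:=B-F_{A_1^l}$; interiority of $A$ means $(1+\epsilon_0)a\in P_l-F_{A_1^l}$ for some $\epsilon_0>0$. A short computation shows that for all sufficiently small $\epsilon>0$ the vector $(1+\epsilon)\big(\lambda a+(1-\lambda)b\big)$ is a convex combination of $(1+\epsilon_0)a$, $b$ and $0$, the two non-trivial weights being $\tfrac{(1+\epsilon)\lambda}{1+\epsilon_0}$ and $(1+\epsilon)(1-\lambda)$, whose sum equals $(1+\epsilon)\big(1-\tfrac{\lambda\epsilon_0}{1+\epsilon_0}\big)$ and is $<1$ for small $\epsilon$. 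Hence $F_{A_1^l}+(1+\epsilon)(C-F_{A_1^l})\in P_l$, so $C$ lies strictly between $F_{A_1^l}$ and a point of $P_l$, i.e. $C$ is $F_{A_1^l}$-interior. Taking $A=F_{I_l(\underline{s}/\lambda)}$ and $B=G$ settles the $F_{A_1^l}$-interior case; here $C\neq F_{A_1^l}$ holds because $F_{A_1^l}$ is an extreme point of $P_l$ (so a combination with $\lambda\in(0,1)$ of the two distinct points $F_{I_l(\underline{s}/\lambda)},G$ cannot equal it), a routine consequence of the explicit description of $P_l$ in Lemma~\ref{Il} that nevertheless must be checked.

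Finally, for the interior $\overset{\circ}{P}_l$ I would argue softly. The translation vector $v=\sum_{i}s_iF_{A_1}^{l-2i}F_\infty^i=F_{I_l(\underline{s})}-F_{A_1^l}$ is a difference of two points of $P_l$, hence lies in the linear subspace parallel to the affine hull of $P_l$; thus \eqref{translation} restricts to a homeomorphism of that affine hull. Since $\overset{\circ}{P}_l$ is contained in the $F_{A_1^l}$-interior, Lemma~\ref{semigroupaction} already gives that its image lies in $P_l$; being the image of a set open in the affine hull under a homeomorphism of that hull, this image is again open in the affine hull, and any subset of $P_l$ open in the affine hull is contained in $\overset{\circ}{P}_l$. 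This gives the invariance of the interior and finishes the argument.
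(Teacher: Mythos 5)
Your argument is correct, but it is worth saying up front that the paper itself gives essentially no proof of this corollary: it deduces it in one breath from Lemma~\ref{semigroupaction} together with the containment of $\overset{\circ}{P}_l$ in the $F_{A_1^l}$-interior, leaving unaddressed exactly the two points you work out, namely that the translate of an $F_{A_1^l}$-interior point is again $F_{A_1^l}$-\emph{interior} (Lemma~\ref{semigroupaction} only puts it in $P_l$), and that invariance of the larger set plus the containment does not by itself give invariance of $\overset{\circ}{P}_l$ (one needs your observation that the translation is a homeomorphism of the affine hull, so the image of a relatively open subset of $P_l$ is relatively open, hence inside $\overset{\circ}{P}_l$). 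Both halves of your argument are sound: the weights in your convexity computation are nonnegative and sum to less than $1$ for small $\epsilon$, and $P_l$ is convex by Proposition 5 of \S 3, so the scaled point stays in $P_l-F_{A_1^l}$.

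Two remarks. First, your first step can be streamlined so that the extremality of $F_{A_1^l}$ never enters: with $F=\lambda F_{A_1^l}+(1-\lambda)G$ as in the proof of Lemma~\ref{semigroupaction}, split $\lambda=\lambda_1+\lambda_2$ with $\lambda_1,\lambda_2>0$; then
\[
F+\sum_{i=1}^{[l/2]} s_iF_{A_1}^{l-2i}F_\infty^i \;=\; \lambda_1F_{A_1^l}+\lambda_2F_{I_l(\underline{s}/\lambda_2)}+(1-\lambda)G \;=\; \lambda_1F_{A_1^l}+(1-\lambda_1)H,
\]
where $H$ is a convex combination of $F_{I_l(\underline{s}/\lambda_2)}$ and $G$ and hence lies in $P_l$; this exhibits the translate directly as an $F_{A_1^l}$-interior point in the sense used in the proof of Lemma~\ref{semigroupaction}, with no $\epsilon$-scaling and no degenerate case to exclude. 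Second, if you keep your route, the assertion that $F_{A_1^l}$ is an extreme point of $P_l$ is true but is \emph{not} a consequence of Lemma~\ref{Il}, which describes only the sub-cone generated by $F_{A_1}$ and the $F_{I_2(p)}$, not all of $P_l$; one would need a separate argument, e.g.\ the linear functional $F\mapsto F(-1,-1)$ (the total number of cones), which is multiplicative on monomials of $Cox$ and, among rank-$l$ types with trace $1$, takes its minimum value $3^l$ only at $A_1^l$, so that it is uniquely minimized on $P_l$ at $F_{A_1^l}$. Alternatively, note that under the working characterization of $F_{A_1^l}$-interior points used in the paper's proof of Lemma~\ref{semigroupaction} (namely $F=\lambda F_{A_1^l}+(1-\lambda)G$ with $\lambda\in(0,1)$, $G\in P_l$), the degenerate case $C=F_{A_1^l}$ is harmless, and this side issue can simply be dropped.
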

\vspace{-0.1cm}
On the other hand, it is straight forward to see the following compactness.
\vspace{-0.1cm}

\begin{lem}
The quotient set $P_l/ \sum_{i=1}^{[l/2]} \R F_{A_1}^{l-2i} F_\infty^i $ is a compact convex polyhedron.
\vspace{-0.1cm}
\end{lem}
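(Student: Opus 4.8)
The plan is to realize $P_l$ inside a finite–dimensional affine space and to show that its only unbounded directions are the $F_\infty$–directions spanning $V:=\sum_{i=1}^{[l/2]}\R\,F_{A_1}^{l-2i}F_\infty^{\,i}$, so that projecting them away collapses $P_l$ to a polytope. Concretely, let $W\subset\R[x,y]$ be the finite–dimensional linear span of all rank-$l$ $F$–triangles, so $P_l\subset W\cap\{\Tr=1\}$; by the Proposition of part III the closure $\overline{P_l}$ is a genuine convex polyhedron in this affine slice. Each generator $F_{A_1}^{l-2i}F_\infty^{\,i}$ of $V$ satisfies $\Tr\big(F_{A_1}^{l-2i}F_\infty^{\,i}\big)=0$ because $F_\infty(0,0)=0$, so $V$ lies in the direction space $\{\Tr=0\}$ and the linear projection $\pi\colon W\to W/V$ restricts to an affine map on the slice. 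I want to prove that $\pi(P_l)$ is a compact convex polyhedron.

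The heart of the argument is to compute the recession cone of $\overline{P_l}$. For this I would feed the decomposition of part III,
\[
F(\R_{\ge0}[Cox]_{l})=\sum_{k=0}^{l}\sum_{\Phi\in J_k}F_\Phi\cdot F(\R_{\ge0}[A_1,\{I_2(p)\}_{p}]_{l-k}),
\]
into Lemma \ref{Il}, which identifies each inner factor with the cone $\{t\,F_{I_{l-k}}(\underline s):t\ge0,\ \underline s\in\Sigma_{[(l-k)/2]}\}$. The bounded ``vertex'' data of each summand comes from $t$ in a compact range together with $\underline s=0$, i.e.\ from the products $F_\Phi F_{A_1}^{\,l-k}$, while every unbounded ray is obtained by letting a coordinate $s_i\to\infty$; such a ray has direction $F_\Phi\,F_{A_1}^{(l-k)-2i}F_\infty^{\,i}$. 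Hence $\mathrm{recc}(\overline{P_l})$ is the cone generated by these finitely many vectors, each divisible by $F_\infty$.

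Next I would verify the crucial inclusion $\mathrm{recc}(\overline{P_l})\subset V$, i.e.\ that each generator $F_\Phi\,F_{A_1}^{(l-k)-2i}F_\infty^{\,i}$ already lies in $\mathrm{span}\{F_{A_1}^{l-2j}F_\infty^{\,j}\}$. This is a finite linear–algebra check, carried out degree by degree in $y$ using $F_\Phi(0,y)=(1-y)^{\rank\Phi}$ and the rotational symmetry \eqref{rotation} to control the factors $F_\Phi$. I expect this to be the main obstacle, since it is exactly here that the combinatorics of $F_\Phi$ for the irreducible factors of rank $\ge3$ must be shown not to contribute genuinely new directions beyond $V$ (a point requiring care, and tightest in the low–rank regime actually analyzed). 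Granting it, the decomposition theorem for polyhedra gives $\overline{P_l}=Q+\mathrm{recc}(\overline{P_l})$ with $Q$ a polytope, and since $\mathrm{recc}(\overline{P_l})\subset V=\ker\pi$ we obtain $\pi(\overline{P_l})=\pi(Q)$, a compact convex polytope.

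Finally I would pass from $\overline{P_l}$ back to $P_l$. The set $P_l$ fails to be closed only along faces at infinity in the $F_\infty$–directions (cf.\ the Remark after Lemma \ref{Il}, where for $l=2$ the missing edge is the pure $F_\infty$ ray); these missing faces lie in cosets of $V$ already represented by points of $P_l$, whence $\pi(P_l)=\pi(\overline{P_l})$. Therefore $P_l/V=\pi(P_l)$ is a compact convex polyhedron, and its semi–algebraic polyhedral structure is inherited from that of $P_l$ because $\pi$ is a linear projection, as claimed.
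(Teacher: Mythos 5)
Your strategy --- project along $V:=\sum_{i=1}^{[l/2]}\R\,F_{A_1}^{l-2i}F_\infty^{\,i}$, compute the recession cone of $\overline{P_l}$ from the decomposition $F(\R_{\ge0}[Cox]_{l})=\sum_{k}\sum_{\Phi\in J_k}F_\Phi\, F(\R_{\ge0}[A_1,\{I_2(p)\}_{p}]_{l-k})$, and conclude compactness once $\mathrm{recc}(\overline{P_l})\subset V$ --- is a faithful formalization of the paper's own one-line proof, which asserts that modulo the $\Sigma_{[l/2]}$-translations only finitely many ``orbits'' of monomials of $\Z[Cox]_l$ remain. But you never prove the inclusion $\mathrm{recc}(\overline{P_l})\subset V$: you identify the recession generators $F_\Phi F_{A_1}^{(l-k)-2i}F_\infty^{\,i}$ ($\Phi\in J_k$, $i\ge1$), call their membership in $V$ ``a finite linear--algebra check'' and ``the main obstacle,'' and then proceed ``granting it.'' Since everything else in your argument (that $V$ lies in $\ker\Tr$, the decomposition theorem for polyhedra, the passage from $\overline{P_l}$ to $P_l$) is routine, this granted inclusion \emph{is} the lemma, and your write-up has a genuine gap exactly at the decisive step.

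Worse, the granted inclusion is false for $l\ge5$, so the gap cannot be closed. A generator with $k\neq0$ requires $k\ge3$ (irreducible factors of rank $\ge3$) and $l-k\ge 2i\ge2$, hence $l\ge5$; this is why for $l\le4$ the recession cone reduces to $V$ and both your argument and the paper's claim hold --- precisely the range the paper verifies by explicit tables. For $l=5$ take $\Phi=A_3$, $i=1$: the family $F_{A_3 I_2(p)}=F_{A_3}F_{A_1}^2+(p-2)F_{A_3}F_\infty$ lies in $P_5$ (each member has trace $1$), and $F_{A_3}F_\infty\notin V=\R F_{A_1}^3F_\infty+\R F_{A_1}F_\infty^2$. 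Concretely, evaluation at $(x,y)=(2,-1)$ annihilates $V$ because $F_{A_1}(2,-1)=0$, while $F_{A_3}(2,-1)\,F_\infty(2,-1)=(-6)\cdot 2=-12\neq0$; equivalently, $F_{A_3}-F_{A_1}^3=x(-3+7x+2y-4x^2-2xy)$ is not a scalar multiple of $F_{A_1}F_\infty=x(-1+2x+y-x^2-xy)$. This evaluation functional therefore descends to the quotient and takes the unbounded values $-12(p-2)$ on the images of $F_{A_3I_2(p)}$, so $P_5/V$ is not compact. Thus the statement itself holds only for $l\le4$, or after enlarging the quotient to the span of all products $F_\Phi F_{A_1}^{l-k-2i}F_\infty^{\,i}$ with $\Phi\in J_k$. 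Note that the paper's proof passes over the same point as ``obvious'' and shares this defect: you correctly located the crux, but locating it is not closing it, and here it cannot be closed as stated.
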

\begin{proof} We have to show that the quotient set is bounded and closed. But it is obvious, since, after the action of the semi-group $\Sigma_{[l/2]}$, there are only finite number of ``obits" of $\Z[Cox]_l$ for each $l$ (recall the proof of Proposition 5.), so that the quotient set is the convex-hull of their (finite) images. 
\end{proof}

In order to get precise description of the polyhedron $P_l$, we need precise data of linear or algebraic dependence relations among $F$-triangles, which seems rather intricate problem. 
In the next section, we show one approach to this problem of finding relations among $F$-triangles, which helps to embed $P_l$ into a lower dimensional affine space. 
\vspace{-0.2cm}


\section{ $A$-triangles.}

For any $\Phi \in \R_{\ge0}[Cox]_{l}$, we introduce an $A$-triangle $A_\Phi$, which, in some sense, is extracting  some core information of $F_\Phi$.  We hope that $A$-triangles should help finding linear dependence relations among $F$-triangles. We proceed this  program for rank 3 and 4 cases, but further rank cases seem still complicated.

\begin{lem}
\label{A-triangle}
For any polynomial $\Phi \in \R_{\ge0}[Cox]_{l}$ homogeneous of rank $l>0$, there exists a unique polynomial $A_\Phi(x,y)\in \R[x,y]$ such that
\begin{equation}
\vspace{-0.1cm}
\label{Atriangle}
F_\Phi(x,y) = A_\Phi(x,y) F_\infty +\Tr(\Phi) \ \B_l(x,y).
\end{equation}
\vspace{-0.1cm}
where $\B_l$ is a polynomial in $\R[x,y]$, independent of $\Phi$, given by
\begin{equation}
\vspace{-0.1cm}
\label{Btriangle}
\B_l(x,y) = ((-y)^l -(1-y)^l)x+ (1-y)^l .
\end{equation}
The polynomial $A_\Phi$, which we shall call the $A$-triangle part of the $F$-triangle $F_\Phi$,  satisfies the following properties.
\vspace{-0.1cm}
$$
\begin{array}{rllll}

\smallskip
{\rm i)} &  A_\Phi(x,y)=(-1)^lA_\Phi(1-x,1-y), \\

\medskip
{\rm ii)} & \partial_y A_{\Phi(I)}(x,y)=  - \sum_{\alpha\in I} A_{\Phi(I\setminus\{\alpha\})}(x,y), \\

\smallskip
{\rm iii) } & \text{By setting $A_\Phi(-x,-y)=\sum_{k,m} a_{k,m}x^ky^m$, all coefficients $a_{k,m}$} \\
&\!\!\! \text{are non-negative, $a_{k,m}\!>\!0 \text{ if } k\!+\!m \le l-2$ and $a_{k,m}\!=\!0$ if $k\!+\!m>l-2$. }
\end{array}
\vspace{-0.1cm}
$$
\end{lem}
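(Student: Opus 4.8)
The plan is to read off $A_\Phi$ as an honest quotient and then derive i)--iii) from the structural results already established (Propositions 2, 3, 4 and Proposition 5), reducing at the end to a single positivity statement about the cluster fan.

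\emph{Existence and uniqueness.} First I would use that $F_\infty=x(x-1)$ vanishes exactly at $x=0$ and $x=1$, and that the boundary values of $\B_l$ match those of $F_\Phi/\Tr(\Phi)$ there: one checks $\B_l(0,y)=(1-y)^l$ and $\B_l(1,y)=(-y)^l$, which by Proposition 5 equal $F_\Phi(0,y)/\Tr(\Phi)$ and $F_\Phi(1,y)/\Tr(\Phi)$. Hence $F_\Phi-\Tr(\Phi)\B_l$, viewed in $\R[y][x]$, vanishes at $x=0$ and $x=1$, so it is divisible by $x(x-1)=F_\infty$ in the domain $\R[x,y]$. The quotient is the required $A_\Phi$, and it is unique since $F_\infty$ is a non-zero-divisor. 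Uniqueness also forces $\Phi\mapsto A_\Phi$ to be additive, so for all three properties I may reduce to the case of a single Coxeter group $\Phi(I)$.

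\emph{Properties i) and ii).} For i) I would note that $F_\infty$ is invariant under $x\mapsto 1-x$ and check by a one-line expansion that $(-1)^l\B_l(1-x,1-y)=\B_l(x,y)$. Substituting $(x,y)\mapsto(1-x,1-y)$ into \eqref{Atriangle} and invoking the rotational symmetry \eqref{rotation} of $F_\Phi$ then yields $F_\Phi=(-1)^lA_\Phi(1-x,1-y)\,F_\infty+\Tr(\Phi)\B_l$; comparing with \eqref{Atriangle} and cancelling $F_\infty$ gives i). For ii) I would apply $\partial_y$ to \eqref{Atriangle} and use Proposition 4, $\partial_yF_{\Phi(I)}=-\sum_{\alpha}F_{\Phi(I\setminus\{\alpha\})}$, together with the elementary identity $\partial_y\B_l=-l\,\B_{l-1}$ and the trace relation $\sum_{\alpha}\Tr(\Phi(I\setminus\{\alpha\}))=\Tr(\partial\Phi(I))=l\,\Tr(\Phi(I))$ from Proposition 3. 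The two $\B_{l-1}$-terms then cancel, and dividing the remaining identity by $F_\infty$ gives ii).

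\emph{Property iii).} Passing to $\tilde A_\Phi(x,y):=A_\Phi(-x,-y)=\sum a_{k,m}x^ky^m$, the vanishing $a_{k,m}=0$ for $k+m>l-2$ is immediate: $F_\Phi$ and $\B_l$ both have total degree $\le l$, so the numerator of $A_\Phi$ has degree $\le l$, and since $F_\infty$ has leading term $x^2$, $A_\Phi$ has total degree $\le l-2$. For nonnegativity and strict positivity I would induct on $l$. Rewriting ii) in the $\tilde A$-variables turns it into the \emph{positive} recursion $\partial_y\tilde A_{\Phi(I)}=\sum_\alpha\tilde A_{\Phi(I\setminus\{\alpha\})}$; comparing coefficients gives $m\,a_{k,m}=\sum_\alpha(\text{coeff.\ of }x^ky^{m-1}\text{ in }\tilde A_{\Phi(I\setminus\{\alpha\})})$. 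By the inductive hypothesis in rank $l-1$ each such coefficient is $\ge0$, and is $>0$ precisely when $k+(m-1)\le(l-1)-2$, i.e.\ $k+m\le l-2$; this settles every coefficient with $m\ge1$. The base $l=2$ is the explicit computation $A_{I_2(p)}=p-1>0$.

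\emph{Main obstacle.} The $\partial_y$-recursion is silent about the ``constant of integration'', namely the $y$-independent coefficients $a_{k,0}$, and the symmetry i) turns out not to pin them down. This is the genuinely hard part. Here I would return to the defining identity at $y=0$: since $\B_l(x,0)=1-x$ and $F_\Phi(x,0)=\sum_kf_{k,0}(-x)^k$, one obtains $\tilde A_\Phi(x,0)=\bigl(\sum_k f_{k,0}x^k-\Tr(\Phi)(1+x)\bigr)/\bigl(x(x+1)\bigr)$, so positivity of the $a_{k,0}$ is exactly positivity of the coefficients of this reduced polynomial formed from the numbers $f_{k,0}$ of simplicial cones spanned by positive roots alone (the positive part of the cluster complex). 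I expect this to need either the known explicit positive $f$-vectors or a dedicated unimodality-type argument, and it is the point where the combinatorics of the cluster fan genuinely enters; in the low-rank cases promised in the introduction it can be verified directly from the tables.
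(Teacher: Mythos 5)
Your construction of $A_\Phi$ is sound and matches the paper in substance: the paper performs Euclidean division of $F_\Phi$ by the monic-in-$x$ polynomial $F_\infty$ and then evaluates at $x=0,1$ using \eqref{x=01}, which is your divisibility argument read backwards. Likewise your deductions of i) from the rotational symmetry \eqref{rotation}, of ii) from Proposition 4 together with $\partial_y\B_l=-l\,\B_{l-1}$ and the trace identity, the additivity-by-uniqueness remark, and the degree bound giving $a_{k,m}=0$ for $k+m>l-2$ are all correct and are essentially the paper's steps. Your positive recursion handling the coefficients with $m\ge 1$ by induction on rank is a reasonable reorganization of ii).

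However, property iii) is where the proposal stops being a proof, and you say so yourself. The unproved positivity of the $y$-free coefficients $a_{k,0}$ is not a deferrable technicality: in your own inductive scheme, the coefficients with $m\ge 1$ at rank $l$ are expressed through the \emph{full} rank-$(l-1)$ statement (for $m=1$ the recursion reads $a_{k,1}=\sum_\alpha a_{k,0}^{(l-1),\alpha}$), so the missing case is needed at every rank and the induction never gets off the ground without it. Nor does the problem become routine after restricting to $y=0$: even knowing that the numerator $\sum_k f_{k,0}x^k-\Tr(\Phi)(1+x)$ has nonnegative coefficients would not suffice, because a nonnegative polynomial divided by $x(x+1)$ need not have nonnegative coefficients (e.g.\ $(x^4+x)/(x(x+1))=x^2-x+1$); some genuine input about the cluster complex is unavoidable, and that input is exactly what your proposal lacks. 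The paper supplies it, working with the full two-variable polynomial rather than the $y=0$ slice: writing $F_\Phi(-x,-y)=\sum_{k,m}f_{k,m}x^ky^m$ and $\Tr(\Phi)\,\B_l(-x,-y)=\Tr(\Phi)\bigl(\sum_{i=0}^{l-1}\binom{l}{i}y^ix+(1+y)^l\bigr)$, it observes that $(1+y)^l$ is precisely the generating function of the cones spanned by negative simple roots alone, and that each coefficient $\binom{l}{i}$ in the linear-in-$x$ term is dominated by $f_{1,i}$ because every cone spanned by negative simple roots extends, by at least one positive root, to a cone of the cluster fan $\Delta(\Phi)$ (citing Fomin--Zelevinsky and Chapoton); hence $F_\Phi(-x,-y)-\Tr(\Phi)\B_l(-x,-y)$ has nonnegative coefficients, and the remaining assertion, that its quotient by $x(x+1)$ again has the stated sign pattern, is taken from the cited references. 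So the missing idea is this fan-theoretic argument (or a substitute for it); your two suggested fallbacks, low-rank tables or an unproved unimodality principle, do not prove the lemma as stated, which concerns all ranks and all homogeneous $\Phi$.
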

\begin{proof}
Since $F_\infty$ is a monic polynomial of degree 2 in the variable $x$, we apply the Euclid division algorithm to $F_\Phi$ so that we obtain a unique expression:
\vspace{-0.1cm}
$$
F_\Phi(x,y) = A_\Phi(x,y) F_\infty + U(y)x +V(y)
\vspace{-0.1cm}
$$
for some polynomials $A_{\Phi}\in \R[x,y]$ and $U,V\in \R[y]$. Substituting $x=0$ and $x=1$, and applying \vspace{-0.1cm}
the formulae \eqref{x=01}, we obtain 
\vspace{-0.1cm}
$$
\vspace{-0.1cm}
\Tr(\Phi)(-y)^l=F_\Phi(1,y) = U(y) +V(y)  \text{\ \ and \ \ } 
\Tr(\Phi)(1-y)^l=F_\Phi(0,y) = V(y),
$$
implying the formulae \eqref{Atriangle} and \eqref{Btriangle}. 

Applying the rotational symmetry \eqref{rotation} to \eqref{Atriangle}, we obtain ii).
The formula \eqref{partialderivative} implies iii), where we use a fact $\partial_y\B_l=-l \B_{l-1}$. For a proof of iv), transform $(x,y)\mapsto (-x,-y)$ and write
$$
\begin{array}{c}
F_\Phi(-x,-y) = A_\Phi(-x,-y) x(x+1) +\Tr(\Phi) \ (\sum_{i=0}^{l-1}C_{l,i}y^i x+(1+y)^l), 
\end{array}
$$
where  LHS is the generating function of the simplicial cone counting of the cluster fan $\Delta(\Phi)$. The two terms of RHS has common degree only for the linear term in $x$. 
Note that there are $C_{l,i}$ number of $i$-dimensional cones  of the simplicial cone over $\Delta(\Phi)_{-}$, and that, for each cone, there exists at least one positive root such that the cone and the root together span a simplicial cone in the cluster fan $\Delta(\Phi)$ (see \cite{F-Z1} and \cite{C1}). Thus, the difference  LHS minus the second term of RHS is a polynomial of positive coefficients. Its quotient by $x(x+1)$ is still a positive coefficients polynomial (see [ibid]).
\end{proof}
Inspired by {\bf Lemma} \ref{A-triangle}, we introduce the space $A$-polynomials in $\R[x,y]$.

\begin{definition}
For $l\in \Z_{\ge0}$, we introduce the space $\A_l$ of $A$-polynomials as follows.
$
\A_0 :=\ \{1\}, \  \A_1 :=\{1-x-y\} 
$
and, for $l\in\Z_{\ge2}$, we set
$$
\A_l:= \ \{ A(x,y) F_\infty  + \B_l \mid A(x,y)\in \R[x,y] \text{ satisfies the following i) and iii)}  \}.
$$
In other words, a polynomial $F\in \R[x,y]$ of degree $l$ belongs to $\A_l$ and called a $A$-polynomial, if and only if  its residue part w.r.t.\ the Euclidean division (as a polynomial in $x$) by $F_\infty$ is equal to $\B_l$, and the quotient part $A(x,y)$ satisfies 


\ \ i) \ rotational symmetry: $A(1-x,1-y)=(-1)^lA(x,y)$, 

iii) \  non-negativity: $A(-x,-y)=\sum_{k,m}a_{k,m}x^ky^m$ s.t. $a_{k,m}\ge0\  \forall k,m$ ,

\quad\ \  $a_{k,m}>0 \text{ if } k+m\le l-2$ and $a_{k,m}=0 \text{ if } k+m>l-2$. 

\noindent
Let us call $A(x,y)$ the {\it $A$-triangle part} of the $A$-polynomial $F$
\end{definition}

The following is an immediate consequence of the definition.

\begin{proposition}
i)  The union $\A:= \cup_{l=0}^\infty \A_l$ is closed under product.

ii)  There is a semi-group action
\vspace{-0.2cm}
$$ 
\begin{array}{c}
\Sigma_{[l/2]}\times \A_l \to \A_l, (\underline{s}, A)\mapsto A+\sum_{i=1}^{[l/2]} s_iF_{A_1}^{l-2i}F_\infty^{i-1}.
\end{array}
\vspace{-0.2cm}
$$

iii) The normalized derivation 
\vspace{-0.2cm}
\begin{equation}
\label{D-derivation}
D\ :\   \R[x,y]\to \R[x,y], \quad F \ \mapsto \ -\frac{1}{l}\partial_y F
\vspace{-0.2cm}
\end{equation}
maps the set $\A_l$ to the set $\A_{l-1}$ ($l\in \Z_{>0})$.
\end{proposition}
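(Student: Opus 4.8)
The plan is to treat all three assertions as direct verifications built on the normal form $F=A\,F_\infty+\B_l$ (with $A$ the $A$-triangle part) together with one algebraic identity for the residues $\B_l$. Writing $\B_l=p_l(y)\,x+q_l(y)$ with $p_l:=(-y)^l-(1-y)^l$ and $q_l:=(1-y)^l$, I would first record the key identity
\[
\B_l\,\B_m \;=\; p_l\,p_m\,F_\infty+\B_{l+m},
\]
obtained by expanding the product and reducing modulo $F_\infty=x^2-x$ (i.e. $x^2\equiv x$), using $q_lq_m=q_{l+m}$ and $p_lp_m+p_lq_m+q_lp_m=p_{l+m}$; the latter is immediate from $p_l+q_l=(-y)^l$. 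This says the residue part is \emph{additive in the rank}, which is the bookkeeping engine behind all three parts.

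For assertion i), with $F=A\,F_\infty+\B_l\in\A_l$ and $G=B\,F_\infty+\B_m\in\A_m$, multiplying and substituting the identity gives $FG=C\,F_\infty+\B_{l+m}$ with $C=AB\,F_\infty+A\,\B_m+B\,\B_l+p_lp_m$, so the residue is exactly $\B_{l+m}$ and $C$ is the candidate triangle part. Property i) for $C$ I would check termwise: $F_\infty$ is invariant under $(x,y)\mapsto(1-x,1-y)$, $A,B,\B_l,\B_m$ carry signs $(-1)^l,(-1)^m,(-1)^l,(-1)^m$, and $p_l(1-y)=(-1)^{l+1}p_l(y)$, so every summand of $C$ transforms with sign $(-1)^{l+m}$. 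For iii) I pass to $\widetilde C(x,y):=C(-x,-y)$: since $\widetilde{F_\infty}=x^2+x$, $\widetilde{\B_l}=\hat p_l x+\hat q_l$ with $\hat p_l=(1+y)^l-y^l$, $\hat q_l=(1+y)^l$ having non-negative coefficients, and $\widetilde p_l\widetilde p_m=\hat p_l\hat p_m$, every term of $\widetilde C$ manifestly has non-negative coefficients, and a term-by-term degree count gives total degree $\le l+m-2$, hence $c_{k,n}=0$ for $k+n>l+m-2$. \textbf{The main obstacle is the strict positivity $c_{k,n}>0$ on the full triangle $k+n\le l+m-2$}, since $\widetilde C$ is a sum of non-negative contributions and one must exhibit a positive one for each exponent. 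I expect to settle this by a covering argument splitting on $k$: the term $\widetilde A\,\hat q_m$ covers the row $k=0$, $\widetilde A\,\hat p_m\,x$ covers $k=1$, and $\widetilde A\widetilde B\,x^2$ covers $k\ge2$, using that $\widetilde A,\widetilde B$ are strictly positive on the solid triangles $\{k+n\le l-2\}$ and $\{k+n\le m-2\}$, whose Minkowski sum is $\{k+n\le l+m-4\}$. The degenerate cases $l\le1$ or $m\le1$ (where $A$ or $B$ vanishes and $\A_1=\{F_{A_1}\}$) fit the same formula and I would dispatch them directly.

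For assertion ii), the increment added to the triangle part is $\Delta:=\sum_{i=1}^{[l/2]}s_iF_{A_1}^{\,l-2i}F_\infty^{\,i-1}$ with $s_i\ge0$. Its rotational sign is $(-1)^{l}$ for each $i$, because $F_{A_1}^{\,l-2i}$ contributes $(-1)^{l-2i}=(-1)^l$ and $F_\infty^{\,i-1}$ contributes $+1$, so i) is preserved. For iii) I pass again to $(-x,-y)$: $\widetilde\Delta=\sum_i s_i(1+x+y)^{l-2i}(x^2+x)^{i-1}$ has non-negative coefficients and total degree $(l-2i)+2(i-1)=l-2$, hence is supported on $\{k+n\le l-2\}$; adding it to $\widetilde A$ keeps the coefficients zero beyond degree $l-2$ and strictly positive on the triangle, so the sum is again a valid triangle part and the image lies in $\A_l$. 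Additivity in $\underline s$ together with the already-noted semi-group structure of $\Sigma_{[l/2]}$ then yields the semi-group action.

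Finally, assertion iii) is the cleanest. For $F=A\,F_\infty+\B_l\in\A_l$, using $\partial_yF_\infty=0$ and $\partial_y\B_l=-l\,\B_{l-1}$ (a one-line differentiation) gives
\[
D F \;=\; -\tfrac1l\,\partial_y F \;=\; \bigl(-\tfrac1l\,\partial_yA\bigr)F_\infty+\B_{l-1},
\]
so the residue is $\B_{l-1}$ and the new triangle part is $A'=-\tfrac1l\,\partial_yA$. Differentiating the symmetry relation $A(1-x,1-y)=(-1)^lA(x,y)$ in $y$ gives $A'(1-x,1-y)=(-1)^{l-1}A'(x,y)$, which is i) in rank $l-1$. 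For iii), $\widetilde{A'}=\tfrac1l\,\partial_y\widetilde A$, so the coefficient of $x^ky^n$ in $\widetilde{A'}$ equals $\tfrac1l(n+1)a_{k,n+1}$; this is $>0$ exactly when $k+(n+1)\le l-2$, i.e. $k+n\le (l-1)-2$, and $0$ otherwise, which is precisely iii) in rank $l-1$. This establishes $D:\A_l\to\A_{l-1}$.
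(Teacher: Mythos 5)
Your proof is correct and takes essentially the same route as the paper's: expand in the normal form $A\,F_\infty+\B_l$ and check the symmetry and positivity conditions term by term --- the paper's displayed expansion of $(A_1F_\infty+\B_{l_1})(A_2F_\infty+\B_{l_2})$ is exactly your identity $\B_{l_1}\B_{l_2}=p_{l_1}p_{l_2}F_\infty+\B_{l_1+l_2}$ in disguise, and its treatment of iii) is your computation $D\B_l=\B_{l-1}$ plus preservation of the coefficient conditions. You are in fact more thorough than the paper, whose proof merely asserts that non-negativity of the individual terms yields the conditions for the sum (your covering argument for strict positivity of the product's triangle part on the full triangle $\{k+n\le l_1+l_2-2\}$ supplies the detail this assertion glosses over) and which omits the verification of part ii) entirely.
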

\begin{proof}
i) We calculate directly 
\vspace{-0.1cm}
$$
\begin{array}{ccl}
\vspace{0.1cm}
&&(A_1(x,y) F_\infty  + \B_{l_1})(A_2(x,y) F_\infty  + \B_{l_2})\\ 
&=&\{A_1(x,y)A_2(x,y)F_\infty+A_1(x,y)\B_{l_2}+A_2(x,y)\B_{l_1} \\
&& +((1-y)^{l_1}-(-y)^{l_1})(  (1-y)^{l_2}-(-y)^{l_2})\}F_\infty +\B_{l_1+l_2}
\end{array}
$$
Each term in $\{\}$ satisfies i), ii) and iv). In particular, the  non-negativity of them implies the sum satisfies also i).

iii)  One sees directly  that  $D \B_l=\B_{l-1}$ and that if $A(x,y)$ satisfies the properties  i), ii) and iv) for $\A_l$ then $DA(x,y)$ preserves the properties i), ii) and iv) for $\A_{l-1}$.
\end{proof}

The space $\A_l$ of $A$-polynomials, where the polyhedron $P_l$ is embedded, seems to be a good frame to analyze $P_l$ and $F$-triangles (at least, for low rank cases).

In the rest of present section, we list up  $A$-triangle parts of $F$-triangles of rank $l\le 4$ and also of Chapoton family $F_{Chap(h)}$. The part of $A$-triangle is expressed by the boldface characters.

\bigskip
\noindent
{\bf Rank 2 $A$-triangles.} 

\noindent
Applying the relation $F_{I_2(2)}=(1-x-y)^2=F_\infty +\B_2$ to (3.5), we get
$$
\vspace{-0.1cm}
\begin{array} {rcl}
F_{I_2(p)}& =& F_{I_2(2)}+sF_\infty   \qquad\qquad \qquad\qquad\qquad \\
& =& {\bf  (s+1)} F_\infty + \B_2,\    \quad for \ \ s=p-2\ge 0.
\end{array}
\vspace{-0.1cm}
$$
This means that $P_2$ is a closed sub-half-line of $\A_1$ starting from 1 (but not 0).

\bigskip
\noindent
{\bf Rank 3 $A$-triangles.}
$$
\begin{array} {rcl}
F_{A_3}&=& 1-6x-3y+10x^2+8xy+3y^2 -5x^3-5x^2y-3xy^2-y^3\\
  &=& {\bf 5(1-x-y)}F_\infty + \B_3 \\
\vspace{0.2cm}
  & = & ({\bf 5X+5Y})F_\infty +\B_3.\\
F_{B_3}&=&1-9x-3y+18x^2+9xy+3y^2 -10x^3-6x^2y-3xy^2-y^3\\
  &=& {\bf 2(4-5x-3y)} F_\infty + \B_3  \\
\vspace{0.2cm}
 & = & ({\bf 10X+6Y})F_\infty +\B_3.\\
F_{H_3}&=&1-15x-3y+35x^2+10xy+3y^2 -21x^3-7x^2y-3xy^2-y^3\\
  &=& {\bf 7(2-3x-y)} F_\infty + \B_3 \\
\vspace{0.2cm}
 &= & ({\bf 21X+7Y})F_\infty +\B_3.\\
F_{A_1I_2(p)}&=&1 - (1+ p) x + (2 p-1) x^2  - (p-1) x^3 - 3 y + (4 + p) x y\\
\vspace{0.2cm}
&& -(p+1) x^2 y + 3 y^2 - 3 x y^2 - y^3\\
\vspace{0.2cm}
  &=&{\bf  (p-(p-1)x-(p+1)y)} F_\infty + \B_3  \\
\vspace{0.2cm}
  &= & ({\bf (p-1)X+(p+1)Y})F_\infty +\B_3.\\ 
  F_{Chap(h)} &=& 1 - \frac{3 h}{2} x + \frac{3 h (-2 + 3 h)}{2(2+h)} x^2 - \frac{(-1 + h) (-2 + 3 h)}{2+h} x^3 - 3 y + \frac{12 h}{2+h} x y \\
\vspace{0.2cm}
  && - \frac{3 (-2 + 3 h)}{2+h} x^2 + 3 y^2 - 3 x y^2 - y^3\\
\vspace{0.2cm}
  &=& ( {\bf \frac{3h-2}{2} - \frac{(h-1)(3h-2)}{h+2}x -\frac{3(3h-2)}{h+2}y}) F_\infty +\B_3 \\
  &=&  {\bf (\frac{(h-1)(3h-2)}{h+2}X +\frac{3(3h-2)}{h+2}Y)} F_\infty +\B_3
\end{array}
$$

Here, $X:=1/2-x$ and $Y:=1/2-y$ are the basis of space of polynomials in $x,y$ of degree 1 with the rotational symmetry (Lemma 4.1 ii)), so that 
\vspace{-0.1cm}
$$
\A_3\ \ = \ \ \R_{>0}XF_\infty+ \R_{>0}YF_\infty +\B_3.
\vspace{-0.1cm}
$$
We draw, in the following Figure 1, the locations of $P_3$ and the trace of Chapoton family $F_{Chap(h)}$  in the $\A_3$-plane. 
For the explanations of the two more curves and the domain names I, II and III, see \S5, Rank 3 case (see Figure 5).


\begin{figure}[h]
\center
\includegraphics[width=11.0cm]{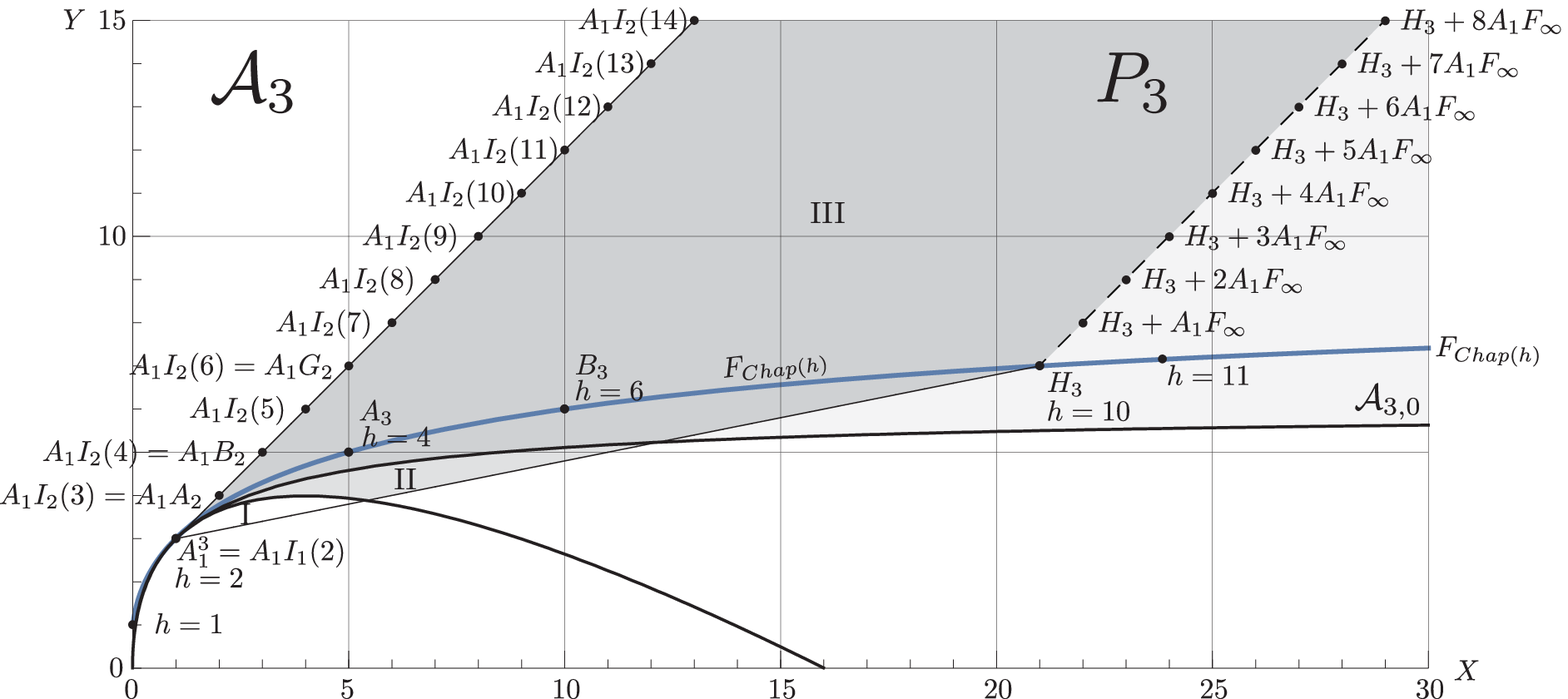}
\vspace{-0.3cm}
\caption{\ Polyhedron $P_3$ and the Chapoton family $F_{Chap(h)}$ in $\A_3$}
\vspace{-0.4cm}
\end{figure}

\vspace{-0.2cm}

\noindent
{\bf Rank 4 $A$-triangles.}
$$
\begin{array} {rcl}
\vspace{-0.01cm}
F_{A_4}&=& 1-10 x+ 30 x^2 - 35 x^3 + 14 x^4 - 4 y + 20 x y - 30 x^2 y + 14 x^3 y \\
&& + 6 y^2 - 15 x y^2 + 9  x^2 y^2  -  4 y^3 +  4 x y^3  + y^4 \\
\vspace{0.11cm}
  &=& {\bf (9 - 21 x + 14 x^2 - 16 y + 14 x y + 9 y^2)} F_\infty + \B_4.\\

\vspace{-0.01cm}
F_{B_4}&=&1 - 16 x + 60 x^2 - 80 x^3 + 35 x^4 - 4 y + 24 x y - 40 x^2 y + 
 20 x^3 y \\
 &&+ 6 y^2 - 16 x y^2 + 10 x^2 y^2 - 4 y^3 + 4 x y^3 + y^4 \\
\vspace{0.11cm}
  &=&{\bf ( 15- 45 x + 35 x^2 - 20 y + 20 x y + 10 y^2)} F_\infty + \B_4.\\

\vspace{-0.01cm}
F_{D_4}&=&1 - 12 x + 39 x^2 - 48 x^3 + 20 x^4 - 4 y + 21 x y - 33 x^2 y + 
 16 x^3 y \\
 &&+ 6 y^2 - 15 x y^2 + 9 x^2 y^2 - 4 y^3 + 4 x y^3 + y^4 \\
\vspace{0.11cm}
  &=&{\bf (11 - 28 x + 20 x^2 - 17 y + 16 x y + 9 y^2)} F_\infty + \B_4.\\

\vspace{-0.01cm}
F_{F_4}&=&1 - 24 x + 101 x^2 - 144 x^3 + 66 x^4 - 4 y + 26 x y - 46 x^2 y + 
 24 x^3 y \\
 &&+ 6 y^2 - 16 x y^2 + 10 x^2 y^2 - 4 y^3 + 4 x y^3 + y^4 \\
\vspace{0.11cm}
  &=& {\bf (23 - 78 x + 66 x^2 - 22 y + 24 x y + 10 y^2)} F_\infty + \B_4.\\
  
\vspace{-0.01cm}
  F_{H_4}&=&1 - 60 x + 307 x^2 - 480 x^3 + 232 x^4 - 4 y + 31 x y - 59 x^2 y + 
 32 x^3 y\\
 && + 6 y^2 - 17 x y^2 + 11 x^2 y^2 - 4 y^3 + 4 x y^3 + y^4\\
\vspace{0.11cm}
  &=& {\bf (59 - 248 x + 232 x^2 - 27 y + 32 x y + 11 y^2)} F_\infty + \B_4.\\

\vspace{-0.01cm}
F_{A_1A_3}&=&1 - 7 x + 16 x^2 - 15 x^3 + 5 x^4 - 4 y + 17 x y - 23 x^2 y + 
 10 x^3 y \\
 && + 6 y^2 - 14 x y^2 + 8 x^2 y^2 - 4 y^3 + 4 x y^3 + y^4\\
 \vspace{0.11cm}
  &=& {\bf (6 - 10 x + 5 x^2 - 13 y + 10 x y + 8 y^2)} F_\infty +\B_4 .\\

\vspace{-0.01cm}
F_{A_1B_3}&=&1 - 10 x + 27 x^2 - 28 x^3 + 10 x^4 - 4 y + 21 x y - 33 x^2 y + 
 16 x^3 y \\
 && + 6 y^2 - 15 x y^2 + 9 x^2 y^2 - 4 y^3 + 4 x y^3 + y^4\\
\vspace{0.11cm}
  &=&{\bf (9 - 18 x + 10 x^2 - 17 y + 16 x y + 9 y^2)} F_\infty + \B_4.\\

\vspace{-0.01cm}
F_{A_1H_3}&=&1 - 16 x + 50 x^2 - 56 x^3 + 21 x^4 - 4 y + 28 x y - 52 x^2 y + 
 28 x^3 y\\
 && + 6 y^2 - 16 x y^2 + 10 x^2 y^2 - 4 y^3 + 4 x y^3 + y^4\\
\vspace{0.11cm}
  &=&{\bf (9 - 18 x + 10 x^2 - 17 y + 16 x y + 9 y^2)} F_\infty + \B_4.\\

F_{I_2(p)I_2(q)}&=&(1 - p x + (-1 + p) x^2 - 2 y + 2 x y + y^2)\\
&& \times  (1 -  q x + (-1 + q) x^2 - 2 y + 2 x y + y^2)\\
  &=&{\bf \big(-1 + p + q + (p - 1)( q-1) x^2 - 2 (p+q) y + (2 + p + 
 q) y^2 }\\
\vspace{0.11cm}
 &&{\bf  -(p q-1)x  + 2( p+ q-2) x y)\big)} F_\infty + \B_4.\\

\vspace{-0.01cm}
F_{I_4(s_1,s_2)}&=&(1 -  x -y)^4+s_1(1- x -  y)^2x (x-1)+s_2x^2(x-1)^2\\
  &=&{\bf \big(3 + s_1 -(3 +2s_1 + s_2) x + (1+s_1+ s_2) x^2 - (8+2s_1) y }\\
 &&{\bf + (6+s_1) y^2  + (4+2s_1) x y  \big)} F_\infty + \B_4.\\
 \end{array}
$$
The space of polynomials in $x,y$ of degree $\le2$ with rotational symmetry is a vector space of rank 4 spanned by $x(x-1),\ 2xy-x-y,\ y(y-1)$ and 1. So the  $P_4$ is an un-bounded semi-algebraic  polygon in the 4  dimensional space $\A_4$:
$$
\R_{\ge0}x(x-1)F_{\infty}+\R_{\ge0}(2xy-x-y)F_{\infty}+\R_{\ge0}y(y-1)F_{\infty}+\R_{\ge0}F_{\infty}+\B_4.
$$ 
Let $(a,b,c,d)\in \R^4$ be coordinates of the point $ax(x-1)F_{\infty}+b(2xy-x-y)F_{\infty}+cy(y-1)F_{\infty}+dF_{\infty}+\B_4$ of the affine space. Then, the semi-group element $(s_1,s_2)\in \Sigma_2$ acts on the coordinates by the translation (recall \eqref{translation}):
\begin{equation}
\label{action4}
(a,b,c,d) \mapsto (a+s_1+s_2,b+s_1,c+s_1,d+s_1) .
\end{equation}
An explicit coordinate values for all finite Coxeter systems of rank  4 are given in the left side of the following table (recall Lemma 4.1 iv) for the non-negativity).
$$\begin{array}{rrrrrc}
\vspace{-0.1cm}
 \mid \!\!\!\!\!  & a & b & c & d  &  (\lambda,\mu,\nu) + s_1F_\infty^2 + s_2 F_{A_1}^2F_\infty \\
 -----\!  \mid \!\!\!\!\! &\!\!\! -----\! \!\!\!\!&\!\!\!\!--- \!\!\!\!&\!\!\!\!\! --- \!\!\!&\!\!\!\! ---\!\!\!\!&\!\! -------------\\
A_4  \ \ \mid \!\!\!\!\!  &  14 & 7 & 9 & 9 &\!\!    (\frac{253}{336},\frac{25}{112},\frac{1}{42})+ \frac{167}{84}F_\infty^2 + \frac{22}{21} F_{A_1}^2 \!F_\infty\\
B_4  \ \ \mid \!\!\!\!\!  &  35 & 10 & 10 & 15 &    (\frac{43}{84},\frac{11}{28},\frac{2}{21})+ \frac{41}{21}F_\infty^2 + \frac{46}{21} F_{A_1}^2F_\infty\\
D_4  \ \ \mid \!\!\!\!\!  &  20 & 8 & 9 & 11 &    (\frac{53}{84},\frac{9}{28},\frac{1}{21})+ \frac{31}{21}F_\infty^2 + \frac{2}{21} F_{A_1}^2F_\infty\\
F_4  \ \ \mid \!\!\!\!\!  & 66  & 12 & 10 & 23 &    (\frac{47}{168},\frac{27}{56},\frac{5}{21})+ \frac{37}{42}F_\infty^2 -\frac{11}{21} F_{A_1}^2F_\infty\\
H_4  \ \ \mid \!\!\!\!\!  & 232  & 16 & 11 & 59 &    (0,0,1)+ 0F_\infty^2 + 0 F_{A_1}^2F_\infty\\
A_1A_3  \ \ \mid \!\!\!\!\!  & 5  & 5 & 8 & 6 &    (\frac{7}{8},\frac{1}{8},0)+ \frac{3}{2}F_\infty^2 + 0 F_{A_1}^2F_\infty\\
A_1B_3  \ \ \mid \!\!\!\!\!  & 10  & 9 & 8 & 9 &    (\frac{5}{8},\frac{3}{8},0)+ \frac{3}{2}F_\infty^2 + 0 F_{A_1}^2F_\infty\\
A_1H_3  \ \ \mid \!\!\!\!\!  & 21  & 14 & 10 & 15 &    (0,1,0)+ 0F_\infty^2 + 0 F_{A_1}^2F_\infty\\
I_2(p)I_2(q)    \mid \!\!\!\!\!  &\! \!\!\!\! (p\!\!-\!\!\!1\!)(q\!\!-\!\!\!1\!) \!\! &\!\!\! p\!\!+\!\!q\!\!-\!\!\!2 \!\!\! & \!\!\! p\!\!+\!\!q\!\!+\!\!\!2\!\! \! &\! \! p\!\!+\!\!q\!\!-\!\!\!1 \!\! \! & \!  (1,\!0,\!0) \!\!+ \!\! (p\!\!+ \!\!q \!\! - \!\! 4)F_\infty^2 \!\! + \!\! (pq\!\! - \!\!3) F_{A_1}^2F_\infty  \\
I_4(s_1,s_2)  \ \ \mid \!\!\!\!\!  & 1\!+\!s_1\!+\!s_2 \!\! & 2\!+\!s_1 \!\!\!\! & 6\!+\!s_1 \!\!\!\! & 3\!+\!s_1 \!\!\!\! &  (1,0,0) + s_1F_\infty^2 + s_2 F_{A_1}^2F_\infty\\
\end{array}
$$
Combining data of this Table with the description of the semi-group action \eqref{action4}, one observes that the quotient $P_4/ (\R F_{A_1^2} F_\infty+\R F_\infty^2)$ is a triangle spanned by the images of $I_4(0,0)=A_1^4, A_1H_3$ and $H_4$ (see Figure 2). 

\begin{figure}[h]
\center
\setlength\unitlength{3.5cm}
\begin{picture}(2.0,0.95)(-.5,-.02)

\put(0,0){\line(1,0){1}}
\put(0,0){\line(3,5){0.5}}
\put(1,0){\line(-3,5){0.5}}

\put(0,0){\circle*{.02}} \put(-.290,-.010){$A_1H_3$}
\put(.5,.833){\circle*{.02}} \put(-.09,.823){$A_1^4=I_4(0,0)$}
\put(1,0){\circle*{.02}} \put(1.050,-.010){$H_4$}
\put(.400,.627){\circle*{.02}} \put(.400,.617){ $A_4$}

\put(.351,.426){\circle*{.02}} \put(.358,.41){ $B_4$}
\put(.363,.525){\circle*{.02}} \put(.37,.515){ $D_4$}
\put(.377,.233){\circle*{.02}} \put(.38,.22){ $F_4$}
\put(.437,.729){\circle*{.02}} \put(.16,.71){$A_1A_3$}
\put(.312,.520){\circle*{.02}} \put(.030,.508){$A_1B_3$}

\end{picture}
\caption{ Polyhedron  $P_4/ (\R F_{A_1^2} F_\infty+\R F_\infty^2)$ in $\A_4/\Sigma_2$}
\end{figure}
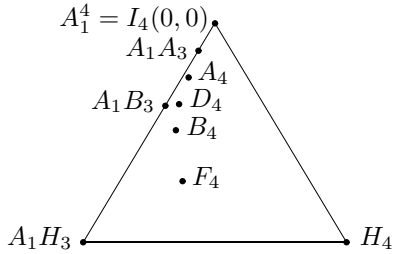

Then using the barycentric coordinates $(\lambda,\mu,\nu)$ with respect to the vertices $I_4(0,0), A_1H_3$ and $H_4$, all A-triangles of rank 4 are described in RHS of Table. In the description, only $F_4$ obtained negative coefficient $s_2<0$. this means that all other A-triangles are below the triangle spanned by $I_4(0,0), A_1H_3$ and $H_4$ (that is, they  are in the image of the semi-group $\Sigma_2$-action on the triangle). 

\medskip
{\noindent}
{\bf Fact .} {\it The $P_4$ is a 4-dimensional polyhedron obtained as the union of images of the translations of the 3-simplex $[A_{F_4},A_{I_4(0)}, A_{A_1H_3},A_{H_4}]$ by the action of $\forall (s_1,s_2)\in \Sigma_2$, whose three dimensional closed face obtained  as the union of the  images of translation of the 1-simplex  $[A_{A_1H_3},A_{H_4}]$ by the action of $\forall (s_1,s_2)\in \Sigma_2$ should be removed.}

\begin{rem}
In the polyhedrons $P_3$ and $P_4$, the points corresponding to finite irreducible root systems appear interior of the polyhedron. On the other hand, non-crystallographic Coxeter groups,  as the extremal points, seem to play role to determine the polygon.  This seems slightly disappointing in the sense that the subject does not depend on the fine combinatorics of finite root systems but on something else. Nevertheless, the original expectation stated in {\bf Abstract} remains meaning full, and we start to analyze examples from next section.
\end{rem}


   

\section{Zero loci of $F$-triangles of rank $\le4$}

In connection with the expectations stated in {\bf Abstract}, we draw figures of the zero loci of $F$-triangles on the unit square $[0,1]\times[0,1]$ in the $x$-$y$ plane, and confirm pictorially the expectation for the cases of rank $\le4$, up to a modification at bending points introduced in the present section. Actually, it is elmentally to give mathematical proofs for those observations, which is left to the reader.

\bigskip
\noindent
{\bf Explanation of Figures 3, 4, 6, 7, 8, 9 and 10.}  

\noindent
The Figures exhibit zero-loci of a function $F \in \A_l$ ($l\! \le\! 4$) in the following way.

\smallskip
i)   The big square (the frame of the figure) 
exhibits a part of $x$-$y$ real plane, 

\quad where coordinate values are given on the boundary of the frame.

ii)  The small square 
exhibits the unit square $[0,1]\times[0,1]$  in the $x$-$y$ plane.

iii) The solid curve  
exhibits the zero loci of $F$ inside the frame.

iv) The dashed curve  
exhibits the zero loci of $D F_\Phi$  {\small(see $*$))} inside the frame 

\quad (see Conjecture 5. and Discussions 2. and 9. in \S6).

v) The dotted curve  
exhibits the zero loci of $D^2F_\Phi$ {\small(see $*$))} inside the frame 

\quad (see Conjecture 6. and Discussion 2. in \S6).

vi) The type (or, a label) of the function $F$  is given at the upper right corner.

\noindent
$*$)  \ \ Here, we recall the normalized derivation $D:\A_l\to \A_{l-1}$ \eqref{D-derivation}.

\medskip
\noindent
{\bf 1.  Rank 1 case (Figure 3).}

Recall the descriptions \eqref{p1} of $P_1=\{F_{A_1}\!=\!1\!-\!x\!-\!y\}$. The zero-loci of $F_{A_1}$ induces the anti-diagonal line on $[0,1]\times[0,1]$ (see Figure 3).
 
\medskip
\noindent
{\bf 2.  Rank 2 case (Figure 3).}

\vspace{-0.05cm}
Recall the description  \eqref{p2} of $P_2=\{F_{I_2(p)}=F_{A_1^2}+(p-2)F_\infty\mid p\in\R_{\ge2}\}$. The  zero-loci of $F_{I_2(p)}$ for $p>2$  is an ellipse tangent to lines $x\!=\!0$ and $x\!=\!1$ at $y\!=\!1$ and $y\!=\!0$, respectively, which intersects with the intervals $[0,1]\! \times\! 1$ and $[0,1]\! \times\! 0$ at $x\!=\!\frac{p\!-\!2}{p\!-\!1}$ and $x\!=\!\frac{1}{p\!-\!1}$, respectively. They satisfy the expectations. 
The zero-loci of $F_{I_2(2)}$ is a double anti-diagonal lines, which, up to the simplicity, satisfies the expectations. 
If $p<2$, then the zero locus of $F_{I_2(p)}$ is a hyperbola (see the dashed curve in Figure 8, $h=1$), and does not satisfy the expectations.

\begin{figure}[h]
\center
\includegraphics[width=12.3cm]{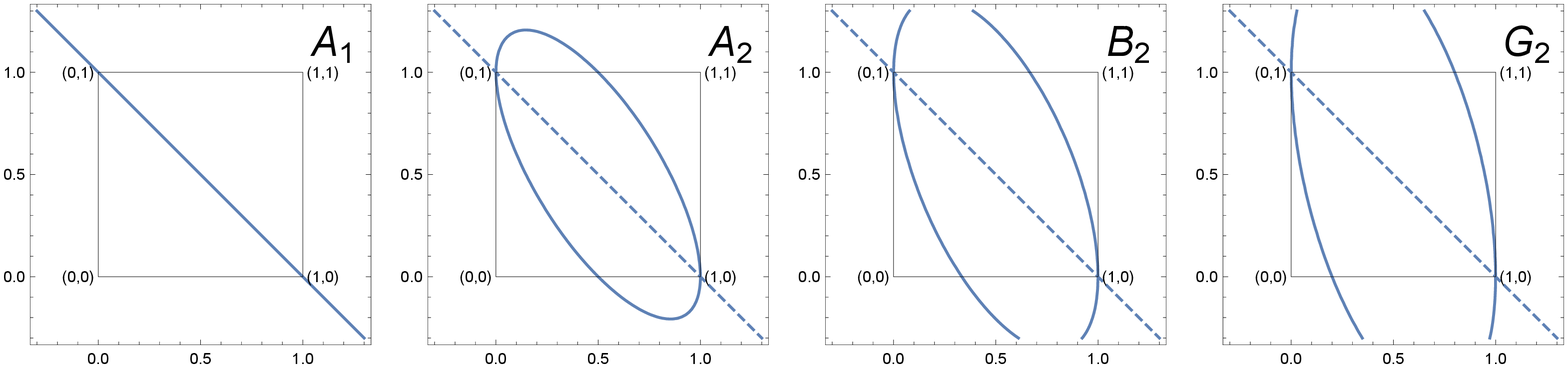}
\caption{  $F$-triangles of rank 1 and 2 of types $A_1$, $A_2$, $B_2$ and $G_2$ }
\vspace{-0.2cm}
\end{figure}


\bigskip
\noindent
{\bf 3.  Rank  $3$ case.} 

\noindent
{\bf  1) Figure 4:} We first study the cases of $F$-triangles $F_\Phi$ for a finite Coxeter group of  type $\Phi$ of rank 3. Among the infinite sequence of types $A_1I_2(p)$ ($p\ge2$), we exhibit only  types $A_1A_2$, $A_1B_2$ and $A_1G_2$ since other cases behave similarly.

\begin{figure}[h]
\center
\includegraphics[width=11.cm]{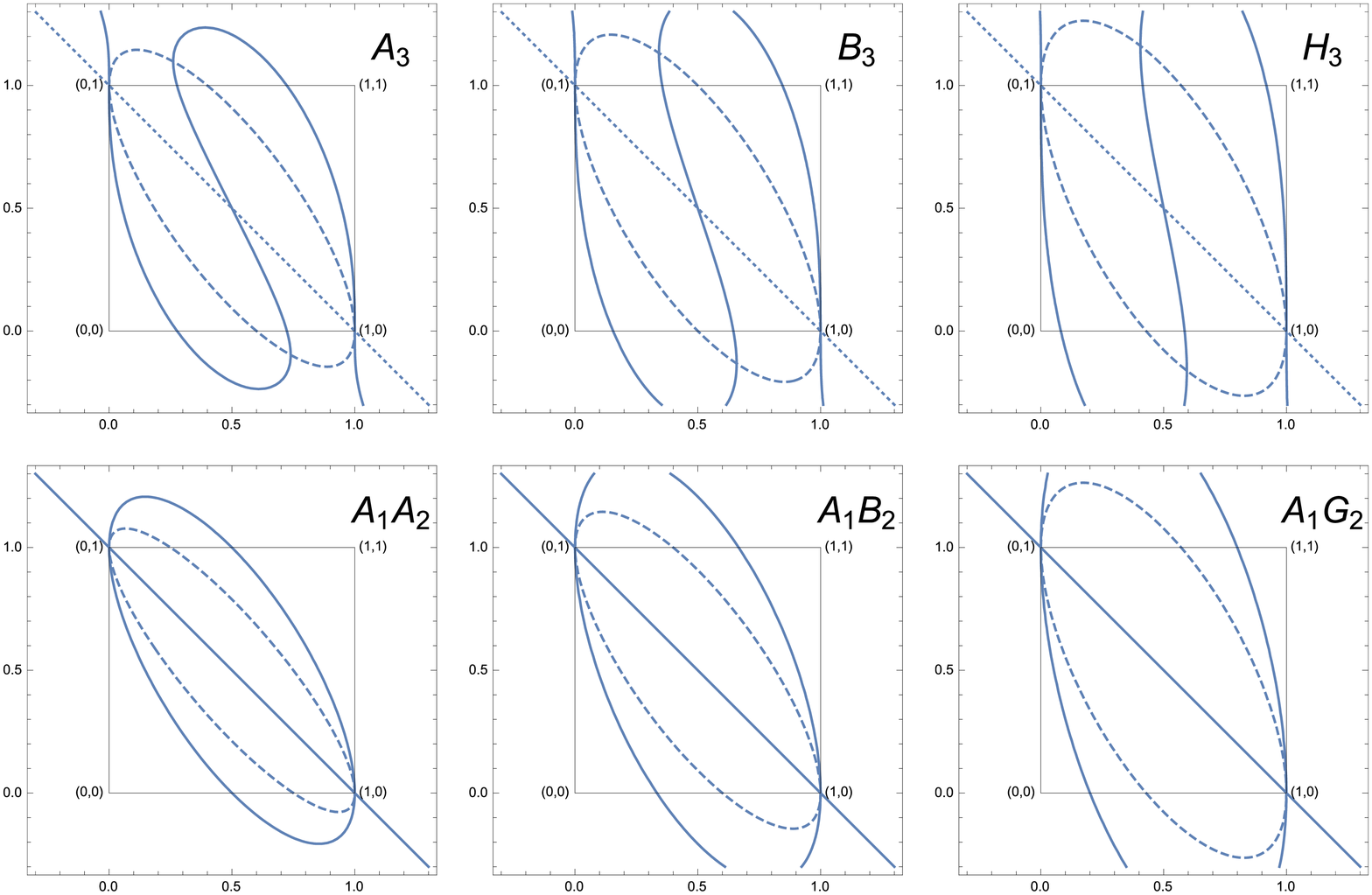}
\caption{$F$-triangles of rank 3 ($A_3$, $B_3$, $H_3$, $A_1A_2$, $A_1B_2$ and $A_1G_2$)}
\vspace{-0.2cm}
\end{figure}
\noindent
In order to proceed precise discussions, we formulate the expectations explicitly:

a)  For all $y\in [0,1]$, all roots of $F_\Phi|y$ lie in $[0,1]$ and  are simple.

b) The $i$-th root $x_i(y)$ is decreasing in the strong sense $\frac{\partial x_i}{\partial y}\!<\!0$  on $(0,1)$.

\noindent
$\Leftrightarrow$ b)'  The roots $x_i'(y)$  of $DF_\Phi|_y$ separates (strictly) the roots of $F_\Phi|y$.
\vspace{-0.2cm}
$$\begin{array}{l}
0\le x_1(y)<x_1'(y)<x_2(y)<x_2'(y)\cdots x_{l-1}'<x_l(y)\le1, \\
\text{\small \!\!\!\!\!\!\!\!\!\!\!\!\!\!\!\!  except for $x_1(1)\!=\!x'_1(1)\!=\!0$, $x_l(0)\!=\!x'_l(0)\!=\!1$}, 
\end{array}
$$

\vspace{-0.15cm}
\noindent
$\Leftrightarrow$ b)"  
Graphs of $x_1,x_2$ and $x_3$ belong to different connected component of 
 \ \ $[0,1]\times[0,1]\setminus \{DF_\Phi=0\}$ up to the points $(0,1)$ and $(1,0)$.

\smallskip
Then, we observes that 1) irreducible types $A_3$, $B_3$ and $H_3$ satisfies both a) and b), and 2)  reducible types $A_1A_2$, $A_1B_2$ and $A_1G_2$ satisfies both a) and b) up to the multiple roots $x_1(1)=x_2(1)=0$ and $x_{l-1}(0)=x_{l}(0)=1$.

\medskip
\noindent
{\bf 2) Figure 5 and 6:} We show examples of $F\in P_3$, where either a) or b) above is not satisfied. 
Recall the description of $P_3$ in \S4:  any $F$-triangle in $P_3$ of rank 3 is expressed as  
\vspace{-0.3cm}
$$
F_{\lambda,s}:=(1-\lambda) F_{A_1^3}  + \lambda F_{H_3} + sF_{A_1}F_\infty
$$
for the parameters $\lambda \!\in\! [0,1]$ and $s\!\in \!\R_{\ge0}$ (where $\lambda\!=\!1$, $s\!>\!0$ is the border). It is a cubic polynomial in the variable $x$ with coefficients in $\R[y,\lambda,s]$, where the leading coefficient $-(1+20\lambda+s)$ is a  strictly negative.  It is elementary to see

a) The polynomials $F_{\!\lambda,s}\!|_y$ has $l$ roots$^{*)}$ in $[0,1]$ for all $y\!\in\![0,1]$ if and only if 
\vspace{-0.1cm}
$$
\begin{array}{l}
-32 \lambda + 144 \lambda^2 + 24 \lambda s + s^2\ge 0 \quad  \text{(shaded part $II\cup III$ in Figure 1 and 5)}. \\
\text{\small \!\!\!\!\! $*$) Roots are simple if the strict innequlity $>$ holds.}
\end{array}
\vspace{-0.1cm}
$$

b) The functions $x_i$ are decreasing in the strong sense$^{**)}$ if and only if 
\vspace{-0.1cm}
$$
\begin{array}{l}
-36 \lambda + 64 \lambda^2 + 20 \lambda s +  s^2\ge 0 \quad  \text{(dark shaded part $III$ in Figure 1 and 5)}. \\ 
 \text{\small \!\!\!\!\!\!\! $**$) This is up to at $y=0,1$, and holds everywhere if the strict innequlity $>$ holds.}
\end{array}
$$
Therefore, we decompose $P_3=I\cup II\cup III$ where $I:=\{F_{\lambda,s} | \text{ a) does not hold.}\}$, $II \!:=\! \{F_{\lambda,s}| \text{ a) holds but not b)}\}$, and $III \!:=\! \{F_{\lambda,s}| \text{ a) and b) hold}\}$ (Figure 5).   

Choose three points  $(\lambda=0.05,s=0.5) \in I, \ (\lambda=0.05,s=0.6649...)\in II$ and $(\lambda=0.05, s=0.8747...) \in III$. 

\begin{figure}[h]
\center
\includegraphics[width=4.0cm]{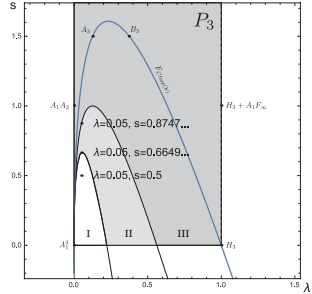}
\vspace{-0.3cm}
\caption{\ Polyhedron $P_3=I\cup II\cup III$ and $F_{Chap(h)}$}
\vspace{-0.3cm}
\end{figure}
\begin{figure}[h]
\center
\includegraphics[width=11.3cm]{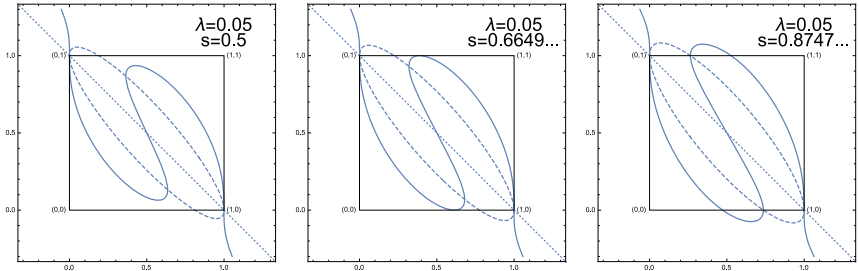}
\caption{  $F$-triangles in area I, II and III}
\end{figure}

{\bf 3) Figure 7:} We take three samples form the border $\overline{P_3}\setminus P_3$ and confirm that the expectation a) and b) are satisfied.
Recall that the function on the border of $P_3$ is described as $F=F_{H_3}+s F_{A_1}F_{\infty}$ for $s\in \R_{>0}$.  We choose three cases: $F_{H_3}+ F_{A_1}F_{\infty}$, $F_{H_3}+ 10 F_{A_1}F_{\infty}$ and $F_{H_3}+ 20 F_{A_1}F_{\infty}$ as for the test. The results are exhibited in Figure 7. 

\begin{figure}[h]
\center
\includegraphics[width=11.3cm]{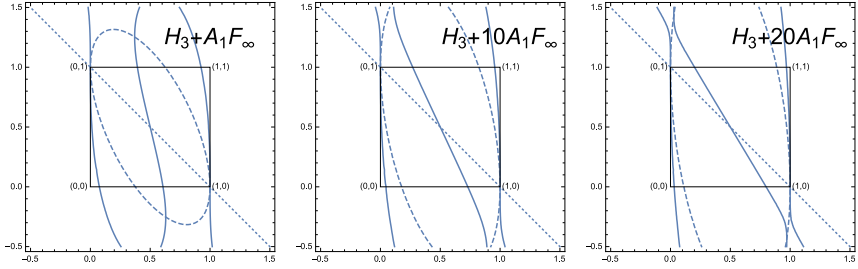}
\caption{Functions of type $H_3+sA_1F_\infty$ on the border of $\overline{P}_3$ }
\end{figure}

{\bf 4) Figure 8.}  We study the zero loci of  functions in the Chapoton family $F_{Chap(h)}$ (recall \S4 Table for rank 3 $A$-triangles).  We see immediately

\smallskip
  $\{h\in\R\mid F_{Chap(h)}\in \A_3\}=[1,\infty)$ \ \ and \ \ $\{h\in \R\mid F_{Chap(h)}\in P_3\}=[2,10]$ 
  
  \smallskip
  \noindent
 (cf.\ Figure 1 and 5). 
Therefre, we decompose the parameter space $[1,\infty)$ of the family into three pieces: $[1,\infty)=[1,2)\cup [2,10]\cup (10,\infty)$.
%
%
%
 
 1. The first component is ``out of range" in the sense that $DF_{Chap(h)}\not \in P_2$ for all $h\in [1,2)$. We choose one point, say $h=1$, as for a sample.
 
 2. We don't choose any point from the second component since it is in the area III in $P_3$ (see Fig. 1 and 5) where we know already a) and b) are satisfied.  
 
 3.  We choose two test points $h=11$ and $h=22$ from the third component. 
 
 The  resulting Figure 8 shows that the case $h=1$ satisfies non of a) and b), and the cases $h=11$ and $22$ satisfy both a) and b).

\vspace{-0.3cm}
\begin{figure}[h]
\center
\includegraphics[width=11.3cm]{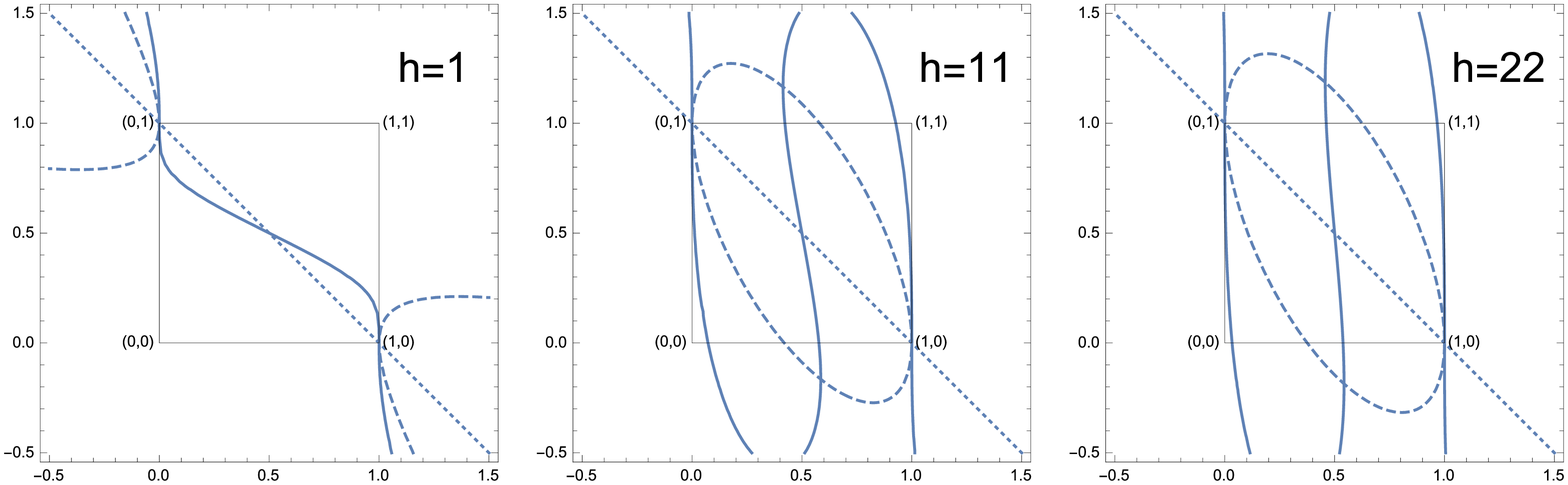}
\caption{Chapoton family $F_{Chap(h)}$}
\end{figure}
\vspace{-0.2cm}

\noindent
{\bf 4.  Rank  $4$ case.}

As in case of rank 3, we study the cases of $F$-triangles $F_\Phi$ for a finite Coxeter group of type $\Phi$ of rank 4.  There is an infinite family of types $I_2(p)I_2(q)$ ($p,q\ge2$). But since they are unions of two ellipses whose nature is well understood, we study only a typical case for $p=3$ and $q=6$. 

For a covenience of explanations, we divide the $F$-triangles into two groups:
$I:=\{A_4,B_4,D_4,F_4,H_4, I_2(p)I_2(q) \ \ (p,q\in \Z_{\ge2}\}$ and 
$II:=\{ A_1A_3, A_1B_3, A_1H_3\}$.

\begin{figure}[h]
\center
\includegraphics[width=12.0cm]{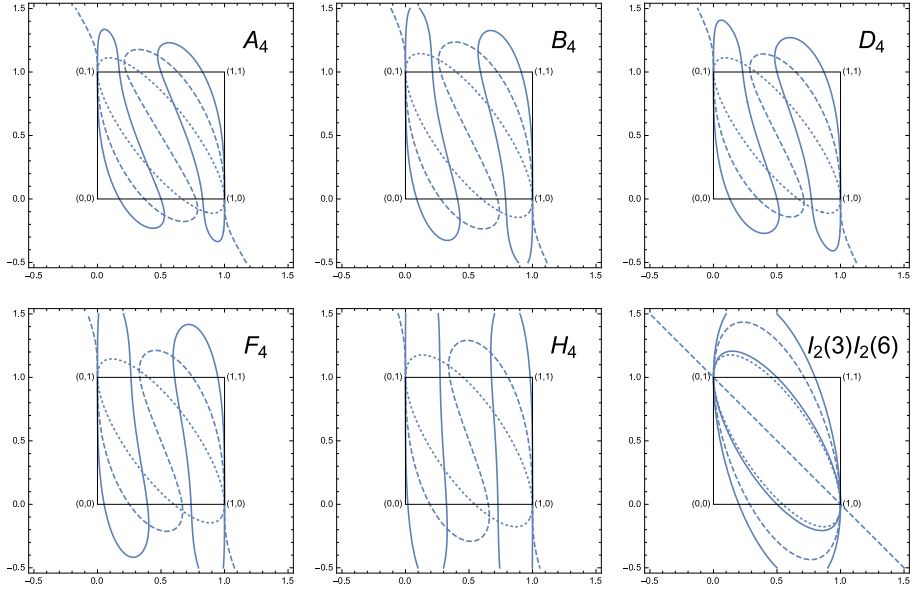}
\caption{ $F$-triangles of rank 4. I}
\end{figure}

\begin{figure}[h]
\center
\includegraphics[width=12.0cm]{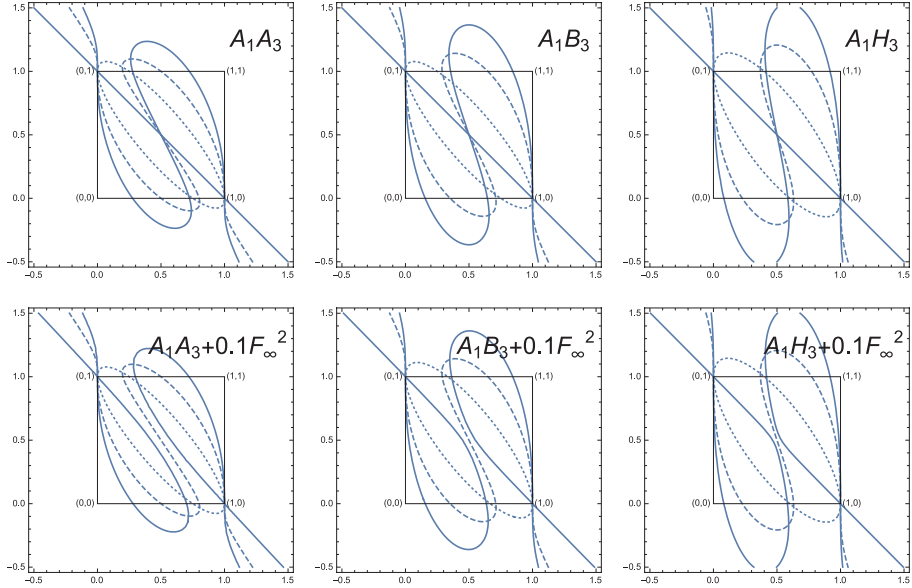}
\caption{ $F$-triangles of rank 4. II}
\end{figure}
We observe the following

\smallskip
1.  The $F$-triangles of type $A_4,B_4,D_4,F_4$ and $H_4$ satisfy the expectations a) and b). Their derivatives $DF$ (which are $F$-triangles in the generalized sense) satisfy also the expectations a) and b) (since the derivatives belong to the area III in $P_3$), and, then the second derivatives $D^2F$  again satisfy the expectation a) and b), and so on  (see Figure 9, where we omitted the zeros of $D^3F$).  

Actually, even though it was not mentioned explicitly, such ``inductive" structure can be already observed for the rank 3 cases (including types $A_3,B_3,H_3,$  $H+3+sA_1F_\infty\ (s\ge0), F_{Chap(h)}\ (h>2)$ and open doamin of III).

2.  The above mentioned ``inductive structure" can be neatly formulated as:

\smallskip
c)  The sequence $D^kF_\Phi|_y$ ($0\le k\le l$) form Sturm sequence for $y\in [0,1]$, 

\smallskip
\noindent
where the property b) in each inductive steps is a part of the defining condition of a Sturm sequence (see \S6 Conjecture A6, and \cite{T} \S16). 

3.  Above observations are valid for $F$-triangles of type $I_2(p)I_2(q)$ for $2\le p<q$ up to roots at $y=0$ and $1$. That is,  c) holds for $y\in (0,1)$ (see Figure 9).

4. For $F$-triangles of non-irreducible types $A_1A_3$, $A_1B_3$ and $A_1H_3$ (see upper row  of Figure 10),  the expectation a) is true only for $y\in [0,1]\setminus\{1/2\}$, but $F|_{1/2}$ has multiple roots $x_2(1/2)=x_3(1/2)=1/2$. Then,  b) and b)' are true only for $y\in [0,1]\setminus\{1/2\}$), since the functions of $x_2(y)$ and $x_3(y)$ are "bending" at $y=1/2$ so that the derivatives at the points are not defined, and we have the equality $x_2(1/2)=x_2'(1/2)=x_3(1/2)$.  Thus, b)" is true in the weaker sense that the graphs of $x_2$ and $x_3$ are note fully contained in the connected  components of $[0,1]\times [0,1]\setminus \{DF=0\}$, but are touching to the boundary of the components at the bending point $y=1/2$.  In summery, c) holds for $y\in [0,1]\setminus \{1/2\}$.

 
 5. On the other hand, it is interesting to observe that the semi-group $\Sigma_4$-action on these function of non-irreducible types $A_1A_3$, $A_1B_3$ and $A_1H_3$ deform them  to functions, for which  c) holds for $y\in [0,1]$ (see lower  row of Figure 10).

\medskip
{\bf A  summery of observations in \S5} 

1.  All $F$-triangles $F$ (up to the non-reduced cases $F_{A_1^l}$, $F_{A_1^2I_2(p)}$ and $F_{I_2(p)^2}$)  of rank $l \le4$  satisfy the expectation stated in Abstract, in a stronger inductive form: 
The seuence $D^kF|_y$ ($0\le k\le l$) form a Sturm sequence for $y$ in a dense subset of $[0,1]$, 
 
 \smallskip
2.  In the above 1,  the functions $D^kF$ belongs to the polyhedron $P_{l-k}$, but is not a $F$-triangle in the classical sense. That is, we need to extend the class of functions in order to get a ``self-closed'' inductively formulation  of the results.

\smallskip
3. Actually, functions belonging to some big part III of  $P_3$ satisfy this  propoerty. On the other hand there is some area I and  II of $P_3$, where the expectation does not hold, that is, $F|_y$ may not be totally real for some $y$ in anopensubset of $[0,1]$, or even  $F|_y$ is totally real for all $y\in [0,1]$, the function $x_i$ may not be monoton decreasing. 

\smallskip
4.  Above discussed  problems on $P_l$ seem naturally extend  to the problems on $\A_l$ (because of the boundary condition $F \bmod F_\infty =\B_l$.  Actually, the part of the Chapoton family $F_{Chap(h)}$ for $h\in (10,\infty)$  does not belong to $P_3$ but to $\A_3$ and satisfies also Inductive Expectation.

5. The non-reduced cases may be able to handle either 1) extend Sturm theorem to handle non-reduced polynomials, or 2) every time we meet with non-reduced polynomial, then replace it by its reduction,  for which we apply again the above induction process (c.f.\ Discussion 13 in \S6).

\section{Zero loci of $F$-triangles: general case}

Based on the observations in previous sections, we formulate some conjectures on the zero loci of $F$-triangles on the unit square in the real $x$-$y$ plane.  It should be interesting and desirable to describe the behavior of zeros for all $A$-polynomials in $\A_l$ by giving a stratification of $\A_l$. However, our knowledge at present is limited, and  we restrict our attention only to the ``stratum" where our expectation should work.  In Conjecture A in the present section, we discuss some ``inductive structures on the stratum".  A self-contained formulation of the ``stratum" is given in Conjecture B in the Appendix.  These approach give another view on the zero loci for $f^+$-polynomials and $f$-polynomials studied in \cite{I-S} and \cite{I1}.

\bigskip
Conjecture A consists of 6 parts. The 6 parts are not independent but are logically overlapping or parallel and dependent to each other as we shall see in the discussions. But, we employed this rather verbose style, since we don't know yet the total logical structure to prove the conjecture.

\bigskip
\noindent
{\bf Conjecture A.}  There exists a semi-algebraic subset $\A_{l,0}$ of $\A_l$ for each $l=0,1,2,\cdots$, which satisfies the following A1., A2., A3., A4., A5. and A6.

\smallskip
A1.  The set $\A_{l,0}$ is non-empty for $l\in \Z_{\ge0}$. More precisely, $\A_{0,0}=\{1\}$, and for any finite Coxeter group (which may not necessarily be irreducible) of type $\Phi$ of rank $l\in \Z_{\ge1}$, the associated $F$-triangle $F_\Phi$ belongs to $\A_{l,0}$.

\smallskip
A2.  The set $\A_0:=\cup_{l=0}^{\infty} \A_{l,0}$ is closed under the product 
and the factorization in the set of real polynomials $F\in \R[x,y]$ with the normalization $F(0)=1$.

\smallskip
In the following A3., A4., A5. and A6.,  we formulate the statements of the conjecture for any fixed element $F\in \overline{P}_{l,0}$. 

\smallskip
A3.  The restriction $F|_y$ of $F$ to any fixed value $y\in [0,1]$, as a polynomial in $x$, has $l$ real roots on the interval $[0,1]$.

 \smallskip
We denote by $x_1(y), \cdots, x_l(y)$ the set of roots of $F|_y$ in its increasing order:
$$
0 \ \le \  x_1(y) \  \le \  \cdots \ \le \ x_l(y) \  \le \ 1 \ 
$$
such that $x_l(0)=1$ and $x_1(1)=0$. As a general fact, each $x_i(y)$ for $i=1,\cdots,l$ is a continuous function in $y\in[0,1]$ and is real analytic except at finite points. 

\medskip
\noindent
{\it Definition.} We call $y\in [0,1]$ is a {\it upper {\rm (resp}.\ lower) bending} point of the function $x_i$ ($1\le i\le l$), 
if there exists $1\le j\le l$ such that $i<j$ (resp.\ $i>j$) and $x_i(y)=x_j(y)$.

\smallskip
A4.  The $x_i$ ($i=1,\cdots,l$) as functions on $y\in (0,1)$ are monotone decreasing in a strong sense that the derivative 
{\large $\frac{dx_i}{dy} $} is negative at all non-bending points $y\in (0,1)$.

\smallskip
A5.  
 i)   If $F$ belongs to $\A_{l,0}$, then  $DF$ belongs to $\A_{l-1,0}$. 

\smallskip 
 ii)  For $F\in \A_{l,0}$ and $y\in [0,1]$, the zero-loci of $(DF)|_y$, say $x'_1(y), \cdots, x'_{l-1}(y)$ in increasing order, separate the zero-loci of $F|_y$.  That is, one has:
 $$
0 \ \le \  x_1(y) \  \le \ x'_1(y)\le  \ \cdots \ \le x'_{l-1}(y) \ \le \ x_l(y) \  \le \ 1 \ .
$$

iii) For $y\in (0,1)$, 
an equality $x_i(y)=x'_i(y)$ (resp.\ $x'_{i-1}(y)=x_{i}(y)$) holds if and only if $y$ is an upper (resp.\ lower) bending point of $x_i$.

\smallskip 
 A6.  Suppose that polynomials $F, DF,  D^2F,\cdots,  D^lF$ are reduced. Then,  
  for $y$ in a dense subset of $[0,1]$, the sequence of polynomials $F|_y, (DF)|_y,$ $(D^2F)|_y,$ $\cdots,  (D^lF)|_y=1$ in $x$ form a Sturm sequence for the interval $[0,1]$ in the following sense i) and ii) (c.f.\ \cite{T} \S16. See Discussions 10.,11.,12. and 13. below). 

\smallskip
i) For any $0<k<l$ and  root $x_0\in [0,1]$ of $(D^kF)|_y$, one has the inequality 
$$
D^{k-1}F(x_0,y) \ D^{k+1}F(x_0,y)>0
$$

ii) For all $1\le i \le l$, one has the inequality (see Footnote 8.)
$$
\partial_x F(x_{i}(y),y) \
DF(x_{i}(y),y) < 0.
$$

This completes the formulation of Conjecture A.

\bigskip
\noindent
{\bf Discussions on Conjecture A.}  

In the following, we  list up some discussions on the conjectures randomly.

\medskip
1.  A formal approach to Conjecture is the following. Since A3.-6.\ on the set of $F$ are semi-algebraic (note that $D:\A_l\to \A_{l-1}$ is a semi-algebraic map), one can inductively construct  semi-algebraic subset $\A_{l,0} \subset \A_{l}$ ($l\in\Z_{\ge3}$) satisfying A3.-6.\ (here $\A_{l,0}=\A_{l}$ for $l=0,1$). Then, we ask whether $\A_0:=\cup_{l=0}^\infty \A_{l,0}$ satisfies A1.\ and A2.  Here, presumably, $\A_{2,0}=P_2$, and $\A_{3,0}$ seems to be the domain in $\A_3$ bounded by the line  $\{F_{A_1 I_2(p)}\}_{p\in\R_{\ge2}}$ and the extension of the curve defining the domain III in \S5, 3. Rank 3 case, b) (see Figure 1.\ where $\A_{3,0}$ is indicated by light shaded part), which is a large extension of the domain III.

 \medskip
2.   In \S5, we have confirmed pictorially that A3-6. of Conjecture A hold for $F$-triangles of  rank $\le 4$ (i.e.\ of types $A_2,B_2,G_2,I_2(p),A_3,B_3,$ $H_3,$ $A_1I_2(p), A_4,B_4,$ $D_4,F_4,H_4,  A_1A_3,A_1B_3,$ $A_1H_3,I_2(p)I_2(q)$). In particular, up to bending points, one solid curve (=a graph of $x_i$) belongs to each connected component of $[0,1]\times[0,1] \setminus$dashed curve, and one dashed curve (a graph of $x_i'$) belongs to each connected component of  $[0,1]\times[0,1] \setminus$dotted curve, etc.

\medskip
3.  One may weaken A3., if the following question is true: 
For a polynomial $F\in \overline{P}_l$ and $y\in [0,1]$, any real solution of the polynomial equation $F |_y=0$ in $x$ lies in the interval $[0,1]$. Actually, we expect the following stronger statements are true:  $F \mid_{x\in \R_{<0},y\in[0,1]}>0$ and $(-1)^l F\mid_{x\in \R_{>1},y\in[0,1]}>0$.

\medskip
4. {\it Example.}  Let $F_{I_l(s_1,\cdots,s_{[l/2]})}$ be a $F$-triangle of virtual type $I_l(s_1,\cdots,s_{[l/2]})$ of rank $l$ \eqref{virtual}. Then, it real roots corresponds to the real factors $F_{A_1}$ (if $l$ is odd) and  $F_{I_2(2-\alpha)}$ for a real root $\alpha\in \R_{\le0}$ of the equation $T^k+\sum_{i=1}^ks_iT^{k-i}=0$ (recall {\bf Remark 3.4.}). 
In particular, it is totally real if and only if 
the type $I_l(s_1,\cdots,s_{[l/2]})$ decomposes into a product of types $I_2(p)$ for $p\in\R_{\ge2}$ and $A_1$.

\medskip
5.  In case of $F\in P_l$, one may weaken the condition in Conjecture  3. that $F|_y$ has real roots only for $y$ in a dense subset of $[0,1]$, since, in general for a real polynomial $F$, the condition to be totally  real (i.e.\ all roots are real) is a closed condition w.r.t. the coefficients of $F$. 

\medskip
6.  It is quite important to bend the function $x_i$ (or, to choose correct branch of the curve of the graph of $x_i$) at the bending point (= the multiple root of $F_y$) (see examples of types $A_1A_3$, $A_1B_3$ and $A_1H_3$ in Figure 7).  Usually the bending direction does not coincides with the direction of its natural analytic continuation. It is also interesting to observe that the bended curve may be deformed to a smooth curve by the semi-group $\Sigma_{[l/2]}$-action. See Examples of types $A_1A3+0.1F_\infty^2$, $A_1B3+0.1F_\infty^2$ and $A_1H3+0.1F_\infty^2$ in Figure 7.

\medskip
7.  {\it Question.} If $F_\Phi$  is a $F$-triangle for $\Phi\in \Z_{\ge0}[Cox]$ where $\Phi$ is a type of an irreducible Coxeter group, then is there no bending point? More generally, is it  true for such $F_\Phi$ where at least one direct summand of $\Phi$ is irreducible.

\medskip
8.  A4. is an immediate consequence of A5. Namely,  we have the identity: $F(x_i(y),y)=0$ for any $1\le i\le l$. For a non bending point $y$, we derivate the equality by $y$ and get the identity
$$
\frac{d x_i}{dy} (y) \ \partial_xF(x_i(y),y)+  \partial_y F(x_i(y),y)=0. 
$$
Since $y$ is a non-bending point of $x_i$ and $x_i(y)$  is a simple root of $F|_y$, we see that $\partial_xF(x_i(y),y)\not=0$, and, further more, since $x_i$ is the $i$th root counted from the left of the equation $F|_y=0$, we obtain $(-1)^i\partial_xF(x_i(y),y)>0$. On the other hand, the fact that $x_i(y)$ belongs to the ``$i$th component of $[0,1]\times[0,1]\setminus \text{Zeros}(DF)$'' means  $(-1)^i \partial_y F(x_i(y),y)>0$.
Both together implies $\frac{d x_i}{dy} (y) <0$.

\medskip
9. The ii) and iii) of  A5. is paraphrased geometrically as follows.

\smallskip
 Consider the components decomposition of $[0,1]\times[0,1]\setminus Zeros(DF)$. Then, up to bending points and the terminal points at $y=0,1$, the graph of the function $x_i$ is contained in the components bounded by the graphs of $x'_{i-1}$ and $x'_i$.  The point of the graph for $x_i$ at a upper (resp.\ lower) bending point lies on the graph for $x'_i$ (resp.\ $x_{i-1}'$).

\medskip
10.  In A6, usual formulation of a Sturm sequence in wider sense (c.f.\ \cite{T} \S16) asks two more conditions:
 a)  Any two neighboring polynomials $(D^kF)|_y$ and $(D^{k+1}F)|_y$ have no common zero at a point $x \in [0,1]$, and b) $(D^lF)|_y$ has constant sign on the interval $x\in[0,1]$.

We removed the conditions, since they are automatically satisfied. Namely, b) is trivial since $D^lF=1$, and a) except that $D^kF$ and $D^{k+1}F$ has a common polynomial factor,  $(D^kF)|_y$ and $(D^{k+1}F)|_y$ have common zero only at finite number of $y\in (0,1)$ so that we have just only to remove those points from the consideration. On the other hand, it is easy to see that $D^kF$ and $D^{k+1}F$ has common factor only when $D^kF$ has a nontrivial multiple factor. But it is not allowed due to the assumption in A6. that  $D^kF$ is a reduce polynomial.

\medskip
11.  We remark that the inequality ii) in A5. is exactly the property we used in the above 8. to show $\frac{d x_i}{dy} (y) <0$.  Thus, A3. is a consequence of A5.  

\medskip
12.  Sturm Theorem says that if $(D^kF)|_y$ ($k=0,\cdots,l$) is a Sturm sequence for the interval $[0,1]$, then $F|_y$ has $n(1)-n(0)$ number of roots on the interval $[0,1]$,
where $n(0)$ (resp.\ $n(1)$) is the number of sign changes in the sequence $D^kF(0,y)$ (resp.\ $D^kF(1,y)$) for $0\!\le\! k \!\le \! l$. \footnote{
In the standard formulation of Sturm Theorem, one should ask the positivity in the condition of 6 ii),  and then the number of roots of $F$ in $[0,1]$ is given by $n(0)-n(1)$.
}
On the other hand, recalling the formula \eqref{x=01},
we obtain 
\[
D^kF(0,y)= (1- y)^{l-k}
\quad \text{and} \quad
D^kF(1,y)= (-y)^{l-k}.
\]
This implies that $n(0)=0$ and $n(1)=l$, and hence $F_y$ for generic $y\in [0,1]$ (and hence for all $y\in[0,1]$) has $l=\rank(F)$ number of roots on the interval $[0,1]$. That is, A3. is a consequence of  A6.  On the other hand, it is also well-known that in such Sturm sequence, the roots of $D^kF$ are separated by the roots of $D^{k+1}F$. In particular, ii) and iii) of A5. should be a consequence of A6.

\medskip
13. In A6,  even the assumption that  $D^kF$ are reduced is not satisfied, the conclusion that $F|_y$ has $l$ real roots in the interval $[0,1]$ for all $y\in [0,1]$ is expected to be true either by a suitable generalization of Sturm theorem, or by replacing non-reduce $D^kF$ by its reduction. To show  this, one may need to formulate some careful induction process on the rank $l$, which seems a bit technical and we omitted it. 

\smallskip
End of discussions on Conjecture A.

\bigskip

\noindent
{\bf Relation of zero loci of a $F$-triangle with that of $f^+$- and $f$-polynomial.} 

\smallskip
Let us explain now the relationship between Conjecture A in the present note, and the results obtained in Ishibe-Saito \cite{I-S} and Ishibe \cite{I1}.  

Consider the Chapoton $F$-triangle $F_\Phi(x,y)$ associated with a finite Coxeter group of type $\Phi$. Then 
$f^+_\Phi(x):=F_\Phi(x,0)$ is the generating function of the \# of cones in the positive part $\Delta_+(\Phi)$ of the cluster fan of type $\Phi$, called  $f^+$-polynomial. The other specialization $f_\Phi(x)=F(x,x)$ is the generating function of the \# of cones of the whole Cluster fan $\Delta(\Phi)$, called $f$-polynomial (one is referred to \cite{Ar,At,A-B-W,B-W,C1,F-Z1,F-Z2,F-Z3,K} for information on Chapoton conjecture and its relation to the non-crossing partition lattice).  

In \cite{I-S}, it is shown that $f^+_\Phi$ has exactly $\rank(\Phi)$ number of simple real roots on the interval $(0,1]$, and in \cite{I1}, it is shown that $f_\Phi$ has exactly $\rank(\Phi)$ number of simple real roots on the interval $(0,1]$ and that the $i$th root of $f_\Phi$ is strictly smaller than the $i$th root of $f^+_\Phi$.  
We remark that those results are natural consequences of Conjecture A of the present note. Namely, the roots of $f^+_\Phi$ is exactly the intersection of zero loci of $F_\Phi$ with the bottom interval $[0,1]\times 0$, which are given by $x_i(0)$ ($i=1,\cdots,l$). The roots of $f_{\Phi}$  is given by the intersection of the diagonal line of the square $[0,1]\times[0,1]$ with the zero loci of $F_{\Phi}$, where the monotone decreasing property  of the functions $x_i(y)$ in A4.\ implies that the diagonal intersects with each graph of $x_i(y)$ for $i=1,\cdots,l$ exactly once at a place less than $x_i(0)$.
 
 \smallskip
 \noindent
 {\bf Concluding Remark.}

 The appearance of the approach in \cite{I-S} and \cite{I1} and that of present note seems rather different. Namely, in \cite{I-S} and \cite{I1}, we have used Rodrigues type expressions for the polynomials $f^+_\Phi(x)$ and $f_\Phi(x)$, which connect those polynomials to the theory of orthogonal polynomials where there are several well-established rich machineries available. On the other hand, the approach in the present note uses the extra variable $y$ and  the higher derivation sequence of $F(x,y)$ by $y$ should (conjecturally) give Sturm sequence as polynomials in $x$ parametrized by $y$. Therefore, the author would like to expect that there should be a reasonable (i.e.\ particular and not arbitrary)  one parameter $y$-deformation theory of  orthogonal polynomials, which should cover and combine both approaches.

\section{Appendix}

We show that the $A$-triangle part of a $F$-triangle  satisfies a system of inequalities \eqref{boundarycondition1}, and conjecturally satisfies further another system of inequalities \eqref{boundarycondition2}.  We formulate in Conjecture B a question whether the union of both systems of inequalities determine the semi-algebraic set $\A_0$ whose existence was assumed in \S 6 Conjecture A.

First, we recall some necessary notation in a slightly generalized form.

 Let $\Phi\in \R_{\ge0}[Cox]$ be a homogeneous element of rank $l\in \Z_{\ge0}$. The image of $\Phi$ by the map $F$ \eqref{generalftriangle} is denoted by $F_\Phi$ and is called a generalized $F$-triangle (in particular, the image of $\Phi\in Cox$ is called a $F$-triangle). Then, dividing $F_{\Phi}$ by $F_\infty=x(x-1)$ as a polynomial in $x$, we obtain a decomposition \eqref{Atriangle}
 $$
\leftline{  (4.1) \qquad\qquad\qquad   $ F_\Phi(x,y) = A_\Phi(x,y) F_\infty +\Tr(\Phi) \ \B_l(x,y)$ }
 $$
where $\B_l= ((\!-\!y)^l \!-\!(1\!-\!y)^l)x\!+\! (1\!-\!y)^l $ \eqref{Btriangle} and $A_\Phi$ is a polynomial of degree $l\!-\!2$ called the $A$-triangle part of $F_\Phi$ (recall Lemma 4.1 for some basic properties of $A$-triangles).  
Recall also the normalized derivation  \eqref{D-derivation}, 
$D :  F_\Phi \mapsto -\frac{1}{l}\partial_y F_\Phi$ for a (generalized) $F$-triangle $F$ of rank $l$ such that $DF_\Phi= F_{\partial \Phi/l}$ (Proposition 4.) is again a (generalized) $F$-triangle of rank $l-1$. Apply $D$ to the decomposition \eqref{Atriangle}. Noting the facts $D\B_l=\B_{l-1}$ and $DF_\infty=0$, we obtain
$$
D A_{\Phi} \ =\  A_{\partial \Phi/l}. 
$$

\medskip
The goal of the present Appendix is to show the following.
\begin{lem}
\label{boundarycondition}
{\rm 1.}  For any $\Phi\in \R[Cox]$ homogeneous of rank $l$ ($l\ge2$) and an integer $k$ with $0\le k\le l-2$, one has
\begin{equation}
\label{boundarycondition1}
(-1)^{l-k}\left(D^kA_{\Phi}|_{x=1,y=0} -Tr(\Phi) \right) \ \ge \ 0 .
\end{equation}
{\rm 2.} If Conjecture A in \S6 is true, then, for any $\Phi\in Cox$ of rank $l$ ($l\ge2$) and an integer $k$ with $0< k\le l-2$,  one has
\begin{equation}
\label{boundarycondition2}
(-1)^{[\frac{l-k}{2}]}\left(R(1-xD^{k-1}A_\Phi|_{y=0},1-xD^{k}A_\Phi|_{y=0} )\right)\ge0 
\quad \footnote{We denote by $R(P(x),Q(x))$  the resultant $c^nd^m \prod_{i=1}^m\prod_{j=1}^n(a_i-b_j)$ of the polynomials $P(x)\!=\!\!c\prod_{i+1}^m\!(x\!-\!a_i\!)$ and $Q(x)\!=\!\!d\prod_{j=1}^n\!(x\!-\!b_j\!)$ for constants $a_i,b_j$ and non-zero contants $c,d$.
}
\end{equation} 
\end{lem}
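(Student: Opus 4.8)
The plan is to push everything down to the single specialization $y=0$, where the $A$-triangle decomposition of Lemma~\ref{A-triangle} collapses to a transparent factorization of the $f^+$-polynomial. Two bookkeeping facts set up the reduction. First, since $DA_\Phi=A_{\partial\Phi/l}$ (established just above in the Appendix), iterating gives $D^kA_\Phi=A_\Psi$ with $\Psi:=\partial^k\Phi/\bigl(l(l-1)\cdots(l-k+1)\bigr)$ of rank $r:=l-k$; and by Proposition 3 ii) applied $k$ times, $\Tr(\Psi)=\Tr(\Phi)$. Second, evaluating \eqref{Atriangle} at $y=0$, where $F_\infty=x(x-1)$ and $\B_r(x,0)=1-x$, yields
\[
F_\Psi(x,0)\ =\ (x-1)\bigl(xA_\Psi(x,0)-\Tr(\Psi)\bigr),
\]
and this one identity is what the whole argument turns on.

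For Part 1 the factorization gives at once $\partial_xF_\Psi(x,0)\big|_{x=1}=A_\Psi(1,0)-\Tr(\Psi)$, so that the quantity to be signed equals $(-1)^r\,\partial_xf^+_\Psi(x)\big|_{x=1}$, where $f^+_\Psi:=F_\Psi(\,\cdot\,,0)$. As $f^+_\Psi(1)=0$, this is exactly the sign with which $(-1)^rf^+_\Psi$ departs from its root at $x=1$. I would obtain it from the location of the roots of $f^+$: by multiplicativity of $F$ and linearity in $\Phi$ it suffices to treat irreducible $\Psi$, and there \cite{I-S} guarantees that all roots of $f^+_\Psi$ lie in $(0,1]$ with $x=1$ the largest. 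Since the leading coefficient of $f^+_\Psi$ has sign $(-1)^r$ (Lemma~\ref{A-triangle} iii)), the absence of roots beyond $1$ forces $(-1)^rf^+_\Psi(x)>0$ for all $x>1$, whence $(-1)^r\partial_xf^+_\Psi|_{x=1}\ge0$; reassembling by linearity and using $\Tr(\Psi)=\Tr(\Phi)$ gives \eqref{boundarycondition1}.

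For Part 2 I would assume Conjecture A and run the same factorization, now with $\Tr(\Phi)=1$ because $\Phi\in Cox$. Writing $\Psi$ of rank $r=l-k$ and $\Psi':=\partial^{k-1}\Phi/\bigl(l\cdots(l-k+2)\bigr)$ of rank $r+1$, the factorization turns the two resultant arguments into
\[
1-xD^{k-1}A_\Phi|_{y=0}=-\frac{f^+_{\Psi'}(x)}{x-1},\qquad
1-xD^{k}A_\Phi|_{y=0}=-\frac{f^+_{\Psi}(x)}{x-1},
\]
of degrees $r$ and $r-1$, whose roots are precisely the roots of $f^+_{\Psi'}$ and $f^+_\Psi$ other than the common top root $x=1$. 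Since $DF_{\Psi'}=F_\Psi$, Conjecture A5 ii) at $y=0$ says the $r$ roots of $f^+_\Psi$ separate the $r+1$ roots of $f^+_{\Psi'}$; deleting $x=1$ leaves the interlacing $z_1\le w_1\le z_2\le\cdots\le w_{r-1}\le z_r$ of the roots $z_i$ of the first polynomial and $w_j$ of the second. The resultant of the footnote then reads $c_1^{\,r-1}c_2^{\,r}\prod_{i,j}(z_i-w_j)$; I would check that both leading-coefficient powers are positive (each reduces to $(-1)^{r(r-1)}=+1$), while interlacing makes each factor $\prod_i(z_i-w_j)$ carry sign $(-1)^{j}$, so the double product has sign $(-1)^{\binom{r}{2}}$. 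The elementary congruence $\binom{r}{2}\equiv[\tfrac{r}{2}]\pmod 2$ converts this into $(-1)^{[\frac{l-k}{2}]}$, giving \eqref{boundarycondition2}, with equality exactly when two of the $z_i,w_j$ collide (the degenerate reducible cases).

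The hard part is genuinely the sign in Part 1. Everything after the factorization is linear algebra and sign-counting, but the positivity of $(-1)^r\partial_xf^+_\Psi|_{x=1}$ is invisible at the level of the cone-counts: it amounts to controlling an alternating sum of the (individually positive) coefficients of the $A$-triangle, and I do not see how to pin its sign from property iii) alone. Thus Part 1 as I have it rests essentially on the global root-location theorem of \cite{I-S}. Making Part 1 self-contained would require proving directly that $f^+_\Psi$ has no root in $(1,\infty)$ --- equivalently, via the rotation \eqref{rotation}, that $F_\Psi(u,1)\neq0$ for $u<0$ --- which I expect to be the real obstacle and which seems to demand essentially the input of \cite{I-S}.
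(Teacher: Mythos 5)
Your proposal is correct, and your Part 2 is essentially the paper's own argument: the paper likewise applies Conjecture A5 ii) to $D^{k-1}F_\Phi$, records the interlacing-induced sign $(-1)^{(l-k+1)(l-k)/2}$ of $R\left[\,D^{k-1}F_\Phi|_y,\;D^{k}F_\Phi|_y\,\right]$, divides out the common factor $1-x$ at $y=0$ via the identity $(D^{j}F_\Phi|_{y=0})/(1-x)=1-xD^{j}A_\Phi|_{y=0}$, and finishes with the same parity observation $(l-k)(l-k-1)/2\equiv[\tfrac{l-k}{2}]\pmod 2$. In Part 1 you also share the paper's reduction skeleton (iterate $DA_\Phi=A_{\partial\Phi/l}$ and $\Tr(\partial\Phi)=l\cdot \Tr(\Phi)$ to reach $k=0$; reduce to monomials by linearity; factor $F_\Psi(x,0)=(x-1)\bigl(xA_\Psi(x,0)-\Tr(\Psi)\bigr)$), but the decisive sign is obtained by a genuinely different route. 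The paper splits into two cases: for a reducible monomial it derives the equality $A_\Phi(1,0)=1$ from the fact that $x=1$ is a multiple root of $F_\Phi(x,0)$; for irreducible $\Phi$ it rewrites \eqref{boundarycondition1} as $(-1)^l\tilde{N}_{\Phi,\deg}(1)\le 0$ for the reduced skew-growth function of the dual Artin monoid and then verifies this case by case through the classification, quoting the explicit values of these derivatives at $t=1$ computed in \cite{I2}. You instead argue qualitatively: all roots of $f^+_\Psi$ lie in $(0,1]$ by the root-location theorem of \cite{I-S} (propagated to reducible monomials by multiplicativity of $F$), hence $(-1)^{l-k}f^+_\Psi\ge 0$ on $[1,\infty)$, and the derivative at the root $x=1$ has the required sign. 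Your route buys uniformity --- no classification table, and the paper's reducible case is absorbed as the multiple-root instance where equality holds --- at the price of invoking the full strength of \cite{I-S} rather than the one-point derivative data of \cite{I2}; as you observe yourself, in both proofs this step is exactly where external analytic input enters, only through different theorems.
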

\begin{proof}  1. We prove this by induction on $l$, where, in the lowest rank case $l=2$ and $k=0$, the $A$-triangle of of type $I_2(p)$ ($p\ge2$) is a constant function $p-1 \ge 1$ (recall \S4) so that the condition \eqref{boundarycondition1} is satisfied. 

For a homogeneous $\Phi$ of rank $l\ge 3$, consider \eqref{boundarycondition1} for $k=1$. Since one has $Tr(\partial\Phi)= l\cdot Tr(\Phi)$ and $D\A_\Phi)=\A_{\partial\Phi/l}$, the equality \eqref{boundarycondition1} (by multiplying $l$)  is equivalent to $(-1)^{l-1}\big(A_{\partial\Phi} (1,0)-Tr(\partial\Phi) \big)  \ge  0$ where obviously $\partial\Phi$ is homogeneous of rank $l-1$. That is, the poof is reduced to the case $l-1$.  Similarly repeating the same type of calculations, the cases of  higher derivatives $D^k\A_\Phi$ are reduced to the lower rank cases. Thus, we have only to prove the case for $k=0$.

The LHS of \eqref{boundarycondition1} is $\R_{\ge0}$-linear with respect to $\Phi$. Therefore, we may reduce the proof to the cases when $\Phi$ is a monomial with coefficient $1$ in the sense that it consists of a single element $\Phi\in Cox$.

Suppose $\Phi$ is not irreducible but decomoses as $\Phi_1\Phi_2\cdots$. Then $F_{\Phi}$ also decomposes as $F_{\Phi_1}F_{\Phi_2}\cdots$ where each factor $F_{\Phi_i}$ ($i=1,2,\cdots$) has a root $x=1$ for the fixed $y=0$. That is, $x=1$ is a multiple root of the equation $F_{\Phi}(x,0)=0$, and hence one has $\partial_xF_\Phi(x,0) |_{x=1}=0$.  Applying $\partial_x$ on \eqref{Atriangle}, we see 
$$
0=\partial_xF_\Phi(x,0) |_{x=1}=
(\partial_xA_\Phi(x,0)\cdot F_\infty) |_{x=1} +(A_\Phi(x,y)\cdot \partial_xF_\infty)  |_{x=1}+ \partial_x\B_l(x,0) |_{x=1}
$$
where the first term vanishes, the second term is equal to  $A_\Phi(1,0)$ and the third term is equal to -1. Thus, we obtain $A_\Phi(1,0)=1$, a particular case of \eqref{boundarycondition1}.

Let $\Phi$ be a type of an irreducible Coxeter group. 
 We consider the specialization of \eqref{boundarycondition1} to $y=0$. Actually, the specialization of $F$-triangle in LHS is just the $f^+$-polynomial (the characteristic fucntion for the non-crossing partition of type $\Phi$), and it was also studied in \cite{I-S} under the name of {\it skew-growth function} $N_{G_\Phi^{dual +},\deg}(t)$ for the dual Artin monoid $G_\Phi^{dual+}$ of type $\Phi$ (c.f.\ \cite{S}).  That is 
$$
\begin{array}{c}
F_\Phi(x,0)= N_{G_\Phi^{dual +},\deg}(t) \quad (t=x).
\end{array}
$$
The quotient $\tilde{N}_{\Phi,\deg}(t)=N_{\Phi,\deg}(t)/(1-t)$ is called the reduced skew growth function [ibid]. Therefore, substituting $y=0$ (and $Tr(\Phi)=1$) in the decomposition formula \eqref{Atriangle}, we obtain the relation 
$$
\begin{array}{c}
(*)\qquad \qquad\qquad\qquad  \tilde{N}_{G_\Phi^{dual +},\deg}(t)=1-x\cdot A_\Phi(x,0) \qquad (t=x)\qquad \qquad \qquad 
\\
\end{array}
$$
between the reduced skew growth function and the $A$-triangle. In particular, the condition \eqref{boundarycondition} for $k=0$, is translated to the condition
$$ 
(-1)^l \tilde{N}_{\Phi,\deg}(1)\  \le\  0 .
$$
Using a trivial relation $ \tilde{N}_{G_\Phi^{dual+},\deg}(1)=-(\partial_t N_{G_\Phi^{dual+},\deg})|_{t=1}$, we need to caluclate the value of the partial derivative of $N_{G_\Phi^{dual +},\deg}$ at $t=1$. Actually, this was already done in \cite{I2} (cf. also Remark 4.4 of \cite{I-S}). So we have 
$$
\begin{array}{rcccccccccccccc}
Type: &A_l & B_l & D_l &\!\! E_6 &\!\!E_7 & E_8 &\! F_4 &\! G_2 &\!\!\! H_3 & H_4 & I_2(p)  \\
\!\!\!\!\!\tilde{N}_{\Phi,\deg}(1): &\!\!\!\!  (\!-\!1)^{l\!+\!1}    &\!\!\!\!  (\!-\!1)^{l\!+\!1}l   &\!\!\!\!  (\!-\!1)^{l\!+\!1}(l\!-\!2)   &\!\!\!  -7 &\!\!\! 16 &\!\!\! -44 &\!\!\!\! -10 &\!\!\! -4 &\!\!\! 8 &\!\!\! -42 &\! 2\!-\!p
 \end{array}
 $$   
 Clearly, this satisfies the required sign condition, and 1.\ of Lemma 7.1 is proven. 
 
 2. We use very little part of Conjecture A by specializing  to $y=0$. 
 
 Due to A1, $F_\Phi$ for any $\Phi\in Cox$ of rank $l$ belongs to $\A_{l,0}$, and A5. i) implies further $D^kF_{\Phi}$ belongs to $\A_{l-k,0}$ for $k=0,\cdots,l$.  
 
Next, applying A5. ii) to $D^{k-1}F_\Phi$ ($1\le k\le l-2$), we see the following system of inequalities:
$$
0 \ \le \  x_1(y) \  \le \ x'_1(y)\le  \ \cdots \ \le x'_{l-k-1}(y) \ \le \ x_{l-k}(y) \  \le \ 1 \ , 
 $$
 where $x_i(y)$ ($1\le i\le l-k+1$) (resp.\  $x_j'(y)$ ($1\le j\le l-k$) are the roots of $D^{k-1}F_\Phi|_y=0$ (resp. $D^{k}F_\Phi|_y=0$). Then, in view that $D^kF_\Phi|_y$ (resp.\ $D^{k+1}F_\Phi|_y$) are polynomials in $x$ of degree $l-k+1$ (resp.\ $l-k$) whose leading coefficients has the sign $(-1)^{l-k+1}$ (resp.\ $(-1)^{l-k}$), we see 
 $$
 (-1)^{\frac{(l-k+1)(l-k)}{2}} R[ D^{k-1}F_\Phi|_y,D^{k}F_\Phi|_y] \ \ge 0   
 $$
 for $y\in[0,1]$. If $y=0$, then the polynomials $D^{k-1}F_\Phi|_{y=0}$ and $D^{k}F_\Phi|_{y=0}$ 
 have the common factor $1-x$. Dividing by the factor, we still obtain
 $$
\!\!\!\!(**) \qquad\qquad \qquad  (-1)^{\frac{(l-k)(l-k-1)}{2}} R\left[ \frac{D^{k-1}F_\Phi|_{y=0}}{1-x},\frac{D^{k}F_\Phi|_{y=0}}{1-x}\right] \ \ge 0  .\qquad\qquad 
 $$
 On the other hand, applying the formula ($*$) in part 1.\ to $D^kF_\Phi$, we get  
 $$
 \frac{D^{k-1}F_\Phi|_{y=0}}{1-x} = 1-xD^{k-1}A_\Phi|_{y=0} \text{ \ \ and \ \ } \frac{D^{k}F_\Phi|_{y=0}}{1-x}=1-xD^{k}A_\Phi|_{y=0}.
 $$
 Substituting them to the previous formula $(**)$ and taking the sign calculation
 $ (-1)^{\frac{(l-k)(l-k-1)}{2}}=(-1)^{[\frac{l-k}{2}]}$ in account,  
  we obtain the relation \eqref{boundarycondition2}, and 2.\ of Lemma 7.1 is proven. 
 
  This completes a proof of Lemma 7.1.
 \end{proof}

\noindent
{\bf Problem.}  Prove 2. of Lemma 7.1 directly without assuming Conjecture A.

\medskip
\noindent
{\bf Example 1.}
  All $F$-triangles in Fig.\ 3,4,6,7,8, 9 and 10 satisfy (7.2).

\medskip
\noindent
{\bf Example 2.}
  The $F$-triangle $F_{I_l(s_1,\cdots,s_{[l/2]})}$ of virtual type $I_l(s_1,\cdots,s_{[l/2]})$ (recall \eqref{virtual}) satisfies (7.2) (since $1-x A_{I_l(s_1,\cdots,s_{[l/2]})}|_{y=0}$ has always the factor $1-x$ for $l>2$ so that the resultant (7.2)  is 0 for $k<l-2$). 

\medskip
\noindent
{\bf Example 3.}
 Let us describe (7.1) and (7.2) explicitly for rank  2 and 3 cases. 

\noindent
{\bf Rank 2} : The space of $A$-polynomials is given by $(s+1)F_\infty+\B_2$ ($s+1\in\R_{>0}$). Then there is only one condition \eqref{boundarycondition1} $(-)^{2-0}((s+1)-1)=s-1\ge0$, which was exactly the condition for $P_2$ (recall \S4, Rank 2 $A$-triangles).
 
\noindent
{\bf Rank 3}:  Recall that a $A$-polynomial $F\in \A_3$ of rank 3 has an expression
$
F=\big(a(1/2-x) +b(1/2-y)\big)F_\infty +\B_3
$
where $a,b\in \R_{>0}$ are arbitrally constants, and $A(x,y):=a(1/2-x) +b(1/2-y)$ is the $A$-triangle part of $F$. Then, 
$$
(7.1) \quad \Leftrightarrow \quad -a/2+b/2-1\le0   \quad \text{and }\quad b/3-1\ge0 ,\qquad\qquad\qquad\qquad\qquad
\vspace{-0.2cm}
$$
and
\vspace{-0.3cm}
$$
(7.2) \quad \Leftrightarrow \quad  R(1-x(a(1/2-x)+b/2), 1-bx)=(18a-3ab-b^2)/18\le 0.
$$
The first linear conditions are  exactly the conditions studied in Figure 1., and the second quadratic condition is exactly equivalent to the condition b) in \S5, Rank 3 case. See domain III in Figures 1 and 5. These together imply also that 

\smallskip
\noindent
{\bf Fact.} {\it The Chapoton family $\{F_{Chap(h)}\}_{h\in\R_{\ge1}}$ satisfies conditions {\rm (7.1)} and {\rm (7.2)}}.

\begin{rem}
As one sees immediately in the proof, that the conditions \eqref{boundarycondition1}  (resp.\ \eqref{boundarycondition2}) give general rules on the possible direction of the deformation of an $F$-triangle when $D^kF$ has a multiple root $x=1$ for $y=0$  (resp.\ when $D^{k-1}F$ and $D^kF$ have a common root $x\in(0,1)$ for $y=0$). Due to rotation invariance of the F-triagle \eqref{rotation}, the conditions also give general rules on the deformation of $F$ if some multiple roots phenomena occurs for $y=1$.  On the other hand, the conditions (7.1) and (7.2) are not sufficint  for a $A$-polynomial to be totally real (compare the above Example 2. with \S6 Discussion 4. Example).

 Therefore, the author would like to suspect that the conditions \eqref{boundarycondition1} and \eqref{boundarycondition2} together with the totally real condition on a $A$-polynomial $F$: 

\medskip 
$ \A_{l}^{tr}: =\{F\in \A_l\mid  F|_y$ for all $y\in [0,1]$ is a totally real polynomial in $x \}$
 
\medskip
\noindent
determine the conjectural domain $\A_0$ in \S6.  More explicitly, we ask
 
 \bigskip
 \noindent
 {\bf Conjecture B.} 
For  $l\in \Z_{\ge0}$, let  $\A_{l,0}$ be the set of all totally real $A$-polynomials $F\in  \A_i^{tr}$ of rank $l$ whose $A$-triangle part $A$ is satisfying 
$$
\begin{array}{cl}
\vspace{0.2cm}
(-1)^{l-k}\left(D^kA|_{x=1,y=0} -1 \right) \ \ge \ 0  &     (0\le k\le l-2), \\
(-1)^{[\frac{l-k}{2}]}\left(R(1-xD^{k-1}A|_{y=0},1-xD^{k}A|_{y=0} )\right)\ge0 & (0<k\le l-2).
\end{array}
$$
Then, $\A_0:=\cup_{l=0}^\infty \A_{l,0}$ satisfies Conjecture A.

\end{rem}


\bigskip
\emph{Acknowledgement.}~
The author express his gratitudes to 
Christos Athanasiadis, Frederic Chapoton, Christian Krattenthaler, and
Tadashi Ishibe for discussions and interests, and to Yoshihisa Obayashi for the helps in drawing Figures. 
A particular gratitudes goes to Frederic Chapoton, who, after looking at an early version of the present note, informed the author other examples of $F$-triangles which supported the author to formulate conjecture in terms of $A$-polynomials.

\medskip
\noindent
This research was supported by World Premier International Research Center Initiative (WPI Initiative), MEXT, Japan, by JSPS KAKENHI Grant Number 25247004, and by JSPS bilateral Japan - Russia Research Cooperative Program.

\end{document}